\documentclass[11pt]{article}
\usepackage{natbib}
\usepackage[margin=0.95in]{geometry}
\usepackage{graphicx}
\usepackage{amsmath,amssymb,amsfonts,amsthm,mathrsfs}
\usepackage{color}
\usepackage[small,bf]{caption}
\usepackage{makecell}
\newcommand{\mylabel}[2]{#2\def\@currentlabel{#2}\label{#1}}
\makeatother
\usepackage[hypertexnames=false]{hyperref}
%\definecolor{dgreen}{rgb}{0,0.5,0}
\hypersetup{
  colorlinks=true,
  linkcolor=blue,
  filecolor=blue,
  citecolor = blue,      
  urlcolor=cyan,
}

\usepackage[utf8]{inputenc} % allow utf-8 input
\usepackage[T1]{fontenc}    % use 8-bit T1 fonts
\usepackage{hyperref}       % hyperlinks
\usepackage{url}            % simple URL typesetting
\usepackage{booktabs}       % professional-quality tables
\usepackage{amsfonts}       % blackboard math symbols
\usepackage{nicefrac}       % compact symbols for 1/2, etc.
\usepackage{microtype}      % microtypography
\usepackage{xcolor}         % colors
\usepackage{multirow}
\usepackage{quoting}

\usepackage{algorithm}
\usepackage{algorithmic}
\usepackage{verbatim}

\usepackage{xspace}
\usepackage{mathtools}
\usepackage[nameinlink,capitalize]{cleveref}

\usepackage{enumitem}

\newcommand{\nc}{\newcommand}
\nc{\DMO}{\DeclareMathOperator}
%\nc\todo[1]{\textcolor{red}{[TODO: #1]}}
\nc\m[2]{m_{#1}(#2)}
\nc{\BR}{\mathbb{R}}
\nc{\BN}{\mathbb{N}}
\nc{\BZ}{\mathbb{Z}}
\nc{\ep}{\varepsilon}
\nc{\ra}{\rightarrow}
\DMO{\KL}{KL}
\DMO{\Unif}{Unif}
\nc{\tr}{\top}

\nc{\MD}{\mathcal{D}}
\nc{\MN}{\mathcal{N}}
\nc{\SD}{\mathscr{D}}
\nc{\la}{\lambda}
\nc{\MR}{\mathcal{R}}
\nc{\MZ}{\mathcal{Z}}
\nc{\MU}{\mathcal{U}}
\nc{\MP}{\mathcal{P}}
\nc{\poly}{\mathrm{poly}}
\DMO{\treesum}{TreeSum}
\DMO{\lapsum}{LapSum}
\DMO{\checksum}{CheckSum}
\nc{\MDts}{\MD_{\treesum}}
\nc{\MDls}{\MD_{\lapsum}}
\nc{\MDcs}{\MD_{\checksum}}
\nc{\MC}{\mathcal{C}}
\nc{\MI}{\mathcal{I}}
\nc{\MT}{\mathcal{T}}
\nc{\MS}{\mathcal{S}}
\nc{\MM}{\mathcal{M}}
\nc{\MX}{\mathcal{X}}
\nc{\MY}{\mathcal{Y}}
\nc{\MA}{\mathcal{A}}
\nc{\MB}{\mathcal{B}}
\nc{\MJ}{\mathcal{J}}
\nc{\MF}{\mathcal{F}}
\nc{\MQ}{\mathcal{Q}}
\nc{\ML}{\mathcal{L}}
\nc{\p}{p}

\nc{\E}{\mathbb{E}}
\nc{\tablesize}{s}
\DMO{\Hist}{hist}
%\nc{\hist}{f_{\Hist}}
\nc{\hist}{\mathrm{hist}}

\nc{\ba}{\mathbf{a}}
\nc{\bx}{\mathbf{x}}
\nc{\by}{\mathbf{y}}
\nc{\bz}{\mathbf{z}}

\DMO{\sr}{sr}
\DMO{\Med}{Med}
\DMO{\Ber}{Ber}
\DMO{\Bin}{Bin}
\DMO{\Had}{Had}

\nc{\ME}{\mathcal{E}}
\DMO{\View}{View}
\nc{\B}{B}
\nc{\M}{M}
\nc{\ha}{\alpha}
\nc{\hk}{k}
\DMO{\pre}{pre}
\nc{\MH}{\mathcal{H}}
\DMO{\FO}{FO}
\DMO{\CM}{CM}
\DMO{\hb}{\beta}
\nc{\MW}{\mathcal{W}}
\nc{\wnull}{\mathbf{w}^\circ}
\nc{\wshift}{\mathbf{w}}

\makeatletter
\newtheorem*{rep@theorem}{\rep@title}
\newcommand{\newreptheorem}[2]{%
\newenvironment{rep#1}[1]{%
 \def\rep@title{#2 \ref{##1}}%
 \begin{rep@theorem}}%
 {\end{rep@theorem}}}
\makeatother

% Commands for comments.
\definecolor{mygreen}{RGB}{0, 200, 0}
\definecolor{myred}{RGB}{100,200,0}

\usepackage{bbm}
\usepackage{mathabx}
\nc{\One}{\mathbbm{1}}
% \SetKwProg{Fn}{}{\string:}{}

\renewcommand{\^}[1]{^{[#1]}}
\newcommand{\GPC}{\texttt{GPC}\xspace}

\nc{\st}{\star}
\nc{\tvnorm}[2]{\left\| {#1} - {#2} \right\|_1}

\nc{\tnorm}[1]{\left\| {#1} \right\|_1}
\nc{\tmix}{t^{\mathsf{mix}}}

\nc{\norm}[1]{\left\|{#1}\right\|}
\nc{\infonenorm}[1]{\left\| {#1} \right\|_{1 \to 1}}
\nc{\oneonenorm}[1]{\left\|{#1}\right\|_{1\to 1}}
\nc{\twoonenorm}[1]{\left\| {#1} \right\|_{2, 1}}
\nc{\Xgpc}[1][d,H,a_0,\alphaub]{\MX_{#1}}
\nc{\Rgpc}[1][d,H]{R_{#1}}
\nc{\gpcnorm}[2][d,H]{\left\| {#2} \right\|_{#1}}

\nc{\MK}{\mathcal{K}}

\nc{\Ksim}{\MK^\triangle}

\nc{\Pins}{\Pi^{\mathsf{ns}}}
\nc{\bP}{\mathbf{P}}
\nc{\PolReg}{\textbf{PolReg}}

\nc{\aw}{\bar{w}}
\nc{\ax}{\bar{x}}
\nc{\au}{\bar{u}}
\nc{\sdist}[2][d]{\Delta_{ {#2}}^{#1}}
\nc{\dist}[2][d]{\Delta_{{#2}}^{#1}}
\nc{\sstoch}[2][d]{\mathbb{S}^{#1}_{#2}}
\nc{\Cr}[1]{\mathbb{A}_{#1}}
\nc{\cmp}{\mathrm{c}}
\mathchardef\mhyphen="2D
\nc{\GPCS}{\mathtt{GPC\mhyphen Simplex}\xspace}
\nc{\GPCPOS}{\mathtt{GPC\mhyphen PO\mhyphen Simplex}\xspace}
\nc{\NMGPCPOS}{\mathtt{Non\mhyphen Markov\mhyphen GPC\mhyphen PO\mhyphen Simplex}\xspace}
\nc{\Alg}{\mathtt{Alg}}

\nc{\BS}{\mathbb{S}}
\nc{\OGD}{\mathtt{OGD}}
\DMO{\Proj}{Proj}

\nc{\alphalb}{\underline{\alpha}}
\nc{\alphaub}{\overline{\alpha}}

\nc{\alb}{a_0}
\nc{\aub}{a_1}
\nc{\grad}{\nabla}
\nc{\rng}{\rangle}
\nc{\lng}{\langle}
\nc{\vep}{\varepsilon}

\newcommand{\breg}{\mathbf{regret}}

% MACROS
% lower bound on the second derivative, \sigma_t

% sum of lower bounds \sigma_{1:t}

% upper bound on the first derivative, G_t

% regularization parameter

% sum of regularization parameters

% sum of regularization parameters

% regret

%\newcommand{\regret}[1][T]{\ensuremath{\sum_{t=1}^T f_t(x_t)- \sum_{t=1}^T f_t(x^*)}}

\newcommand{\eps}{\ensuremath{\varepsilon}}
\newcommand{\R}{\ensuremath{\mathbb R}}

\theoremstyle{plain}
\newtheorem{theorem}{Theorem}
\newtheorem{lemma}[theorem]{Lemma}

\newtheorem{observation}[theorem]{Observation}
\newtheorem{assumption}{Assumption}

\newtheorem*{theorem*}{Theorem}
\newtheorem*{lemma*}{Lemma}
\newtheorem*{corollary*}{Corollary}
\newtheorem*{proposition*}{Proposition}
\newtheorem*{claim*}{Claim}
\newtheorem*{fact*}{Fact}
\newtheorem*{observation*}{Observation}
\newtheorem*{assumption*}{Assumption}

\theoremstyle{definition}
\newtheorem{definition}[theorem]{Definition}

\newtheorem*{definition*}{Definition}
\newtheorem*{remark*}{Remark}
\newtheorem*{example*}{Example}

\newcommand{\ignore}[1]{}

\def\K{\mathcal{K}}

\def\E{{\mathbb{E}}}

\usepackage{color}
\usepackage{nicefrac}

\title{Population Dynamics Control with Partial Observations}

\author{
  Zhou Lu\thanks{
  Princeton University. 
  \texttt{zhoul@princeton.edu}.}
  \and Y.Jennifer Sun\thanks{
  Princeton University.
  \texttt{ys7849@princeton.edu}.}
  \and Zhiyu Zhang\thanks{
  Harvard University.
  \texttt{zhiyuz@seas.harvard.edu}.}
}

\begin{document}

\maketitle

\begin{abstract}%
We study the problem of controlling population dynamics—a class of linear dynamical systems evolving on the probability simplex—from the perspective of online non-stochastic control. While \cite{golowich2024online} analyzed the fully observable setting, we focus on the more realistic, partially observable case, where only a low-dimensional representation of the state is accessible.

In classical non-stochastic control, inputs are set as linear combinations of past disturbances. However, under partial observations, disturbances cannot be directly computed. To address this,  
\cite{simchowitz2020improper} proposed to construct oblivious signals, which are counterfactual observations with zero control, as a substitute. This raises several challenges in our setting: (1) how to construct oblivious signals under simplex constraints, where zero control is infeasible; (2) how to design a sufficiently expressive convex controller parameterization tailored to these signals; and (3) how to enforce the simplex constraint on control when projections may break the convexity of cost functions.

Our main contribution is a new controller that achieves the optimal $\tilde{O}(\sqrt{T})$ regret with respect to a natural class of mixing linear dynamic controllers. To tackle these challenges, we construct signals based on hypothetical observations under a constant control adapted to the simplex domain, and introduce a new controller parameterization that approximates general control policies linear in non-oblivious observations. Furthermore, we employ a novel convex extension surrogate loss, inspired by \cite{lattimore2024bandit}, to bypass the projection-induced convexity issue.

\end{abstract}

\section{Introduction}

Online non-stochastic control \citep{agarwal2019online,hazan2020nonstochastic,hazan2022introduction} is an emerging framework at the intersection of online learning and control theory. By leveraging techniques from \emph{Online Convex Optimization} (OCO), it enables the design of \emph{no-regret} controllers that go beyond traditional assumptions, such as i.i.d. disturbances and time-invariant cost functions, in optimal control. In this work, we extend this framework to the control of \emph{population dynamics}, a class of \emph{Linear Dynamical Systems} (LDS) evolving on the probability simplex. These systems, which arise in epidemic modeling and evolutionary game theory, are not \emph{strongly stabilizable}—that is, there exists no state-feedback linear controller that can drive the system asymptotically to zero in the absence of disturbances. This violates a key assumption underpinning much of non-stochastic control theory. 

\cite{golowich2024online} recently studied this problem in the \emph{fully observable} setting, where the controller has access to the exact high-dimensional state (i.e., the full population distribution). However, in many practical applications, full observability is unrealistic due to measurement constraints. A more practical setting assumes that only a low-dimensional representation of the state is available—its dimensionality reflecting the number of effective measurements that can be made, as exemplified by the celebrated Kalman filter \citep{kalman1960new} and the rich literature on inverse problems. From a technical perspective, this \emph{partially observable} setting presents a fundamental challenge: the standard approach in non-stochastic control—designing controls as linear combinations of past disturbances \citep{hazan2022introduction}—becomes infeasible since past disturbances can no longer be directly computed.

This paper focuses on controlling partially observable population dynamics and introduces new techniques to handle the intricate interplay between partial observability and the simplex constraint. Our contributions include (i) constructing tailored signals as alternatives to incomputable disturbances, (ii) designing a novel convex controller parameterization that approximates control policies linear in non-oblivious observations, and (iii) introducing a convex extension surrogate loss to ensure that projections are applied only during control execution, thereby preserving the convexity in parameter updates.

\paragraph{Problem setting.} 
Let $\Delta^{d}:=\left\{p\in\BR^d: p\geq 0,\norm{p}_1=1\right\}$ be the probability simplex, and $\BS^{d_1,d_2}:=\left\{M\in\BR^{d_1 \times d_2}:Mx \in\Delta^{d_1} \textrm{ for all } x \in \Delta^{d_2}\right\}$ be the set of matrices mapping $\Delta^{d_2}$ to $\Delta^{d_1}$. Given the state-control dimension $d_x$ and measurement dimension $d_y$, we define a \emph{partially observable simplex LDS} as follows:

\begin{definition} [Partially observable simplex LDS]
\label{def:po-simplex-lds}
Let $A, B\in\mathbb{S}^{d_x}$ and $C\in\mathbb{S}^{d_y,d_x}$ be the system matrices, $x_1\in\Delta^{d_x}$ be some initial state, and $\alpha\in [0,1]$ be the fixed control power. A \emph{partially observable simplex LDS} is defined as a dynamical system such that at each time step $t\geq 1$, given the control input $u_t\in\Delta^{d_x}$, the disturbance strength $\gamma_t\in[0,1]$ and the disturbance $w_t\in\Delta^{d_x}$, its state $x_t\in\Delta^{d_x}$ and output $y_t\in\Delta^{d_y}$ evolve according to
\begin{align*}
x_{t+1}&=(1-\gamma_t)[(1-\alpha)Ax_t+\alpha Bu_t]+\gamma_tw_t, \\
y_{t+1} &= Cx_{t+1}.
\end{align*}
\end{definition}

The goal is to design controllers that, given knowledge of $A, B, C, \alpha$, and $\gamma_t$, can operate without direct access to the internal state. The input-output protocol is the following.
\begin{enumerate}
\item At the beginning of the $t$-th step, Nature picks $\gamma_t\in[0,1]$ and $w_t\in\Delta^{d_x}$ obliviously. 
\item The controller observes the measurement $y_t\in\Delta^{d_y}$ and the disturbance strength $\gamma_t\in[0,1]$,\footnote{See \citep[Appendix~B]{golowich2024online} for a justification of observing $\gamma_t$. Roughly speaking, it is often feasible to observe the total size of the population in each step, from which $\gamma_t$ can be computed.} and then picks a control $u_t\in\Delta^{d_x}$. 
\item Nature reveals a convex cost function $c_t:\Delta^{d_y}\times \Delta^{d_x}\rightarrow\BR_+$ to the controller, and the controller incurs the cost $c_t(y_t,u_t)$.
\end{enumerate}

For a given sequence of nature’s choices, let $y^\pi_t$ and $u^\pi_t$ denote the observations and controls under an arbitrary controller $\pi$. Our objective is to design a controller $\MA$ such that regardless of nature's decisions, the \emph{regret} of $\MA$ with respect to a suitable comparator class $\Pi$ satisfies 
\begin{equation*}
\breg_\MA(\Pi,T):=\sum_{t=1}^Tc_t\left(y_t^{\MA},u_t^{\MA}\right)-\min_{\pi^*\in \Pi}\sum_{t=1}^T c_t\left(y_t^{\pi^*},u_t^{\pi^*}\right)=o(T).
\end{equation*}

\paragraph{Technical challenges.} This problem presents unique challenges due to the interplay between population dynamics and partial observability:
\begin{itemize}
\item \textbf{Memory effects in non-stochastic control:} Unlike OCO, the cost function $c_t(y_t, u_t)$ depends not only on $u_t$ but on the entire history $(u_1, \dots, u_{t-1})$ through $x_t$. Many works assume strong stabilizability\footnote{There exists a
“default” controller such that $x_t$ decays to zero exponentially fast without disturbances.} to mitigate this issue, reducing the problem to an OCO-like setting with logarithmic memory. However, population dynamics are only \emph{marginally stable}: states neither explode nor decay to zero.
\item \textbf{Partial observability and convexity:} Classical non-stochastic control strategies parameterize $u_t$ as a linear combination of past disturbances, ensuring convexity. This relies on computing $w_{t-1}$ from state observations, which is impossible under partial observability.
\end{itemize}

While \cite{golowich2024online} and \cite{simchowitz2020improper} addressed these issues separately, their combination introduces new difficulties. \cite{simchowitz2020improper} proposed using counterfactual observations under zero control (termed \emph{nature’s $y$}) instead of disturbances, but in population dynamics, zero control is infeasible. This raises two key questions: (i) how to construct suitable signals under the simplex constraint and (ii) how to ensure control feasibility without violating convexity. In particular, if we directly apply the approach of \cite{simchowitz2020improper} to population dynamics, the resulting $u_t$ could easily violate the simplex constraint, and a naive projection would introduce nonconvexity to the subsequent optimization problem. 

\paragraph{Results and techniques.} 
We propose two controllers achieving the optimal $\tilde{O}(\sqrt{T})$ regret against different comparator classes. A simpler version competes with linear \emph{Markov} controllers of the form $u_t = K y_t$, while a more advanced version handles \emph{Linear Dynamical Controllers} (LDCs), a richer class with internal state tracking. Our main innovations include:
\begin{enumerate}
    \item \textbf{Convex extension surrogate loss:} We make a novel use of a convex extension surrogate loss, coupled with the \emph{Minkowski projection} \citep{mhammedi2022efficient}, to bypass the simplex constraint on the output of the controller. While similar ideas exist in OCO \citep{cutkosky2018black}, our specific use case in partially observable non-stochastic control is more challenging.
    \item \textbf{New signal construction:} Instead of counterfactual observations under zero control, we construct signals based on a constant control policy $u_t = K_0 y_t$, where $K_0$ is a stochastic matrix. This enables a new convex controller parameterization tailored to the simplex constraint, which requires new techniques beyond the existing unconstrained analysis \citep{simchowitz2020improper}.
\end{enumerate}

\paragraph{Related works.} There has been a surge of interest in online non-stochastic control since the seminal work of \citet{agarwal2019online}; see \citep{hazan2022introduction} for a recent survey. Beyond the full-information, fully observed setting originally studied by \cite{agarwal2019online}, a series of works extended this framework to settings with limited feedback, such as bandit feedback \citep{cassel2020bandit,sun2024optimal,suggala2024second,sun2024tight} and partial observations \citep{simchowitz2020improper,yan2023online}. A natural requirement of the partially observed setting is the comparator class being LDCs, as such controllers encompass the optimal $H_{\infty}$ control law which is the hallmark of classical optimal control theory \citep{bacsar2008h}. However, due to the difficulty of analyzing LDCs, technical refinements since \citep{simchowitz2020improper} have been scarce. 

Going beyond strong stabilizability, \citet{golowich2024online} recently studied the non-stochastic control of population dynamics in the fully observed setting. Readers are referred to their Section~1 and 5 for the background of such system models, as well as application-oriented case studies.

\paragraph{Organization.} 
\cref{sec:min} introduces Minkowski projection and its applications. \cref{sec:po-prelim-and-approx} presents a controller for Markov comparators, while \cref{sec:non-markov} generalizes to LDCs. Proofs are deferred to the appendix due to space limit. 

\section{Minkowski Extension and Projection}\label{sec:min}

To address constraint issues in population dynamics, we introduce a convex extension technique. A similar approach has been explored in OCO (Online Convex Optimization), as presented in \citep[Section~4]{cutkosky2018black}. However, we extend this technique to the more complex setting of non-stochastic control.

\paragraph{Motivation.} Our control learning class outputs a ``raw control'' $\tilde{u}_t(M) \in \mathbb{R}^{d_x}$, which is linear in the learning parameters $M$. In population dynamics, this raw control may not necessarily satisfy the constraint $\Delta^{d_x}$. A naive approach would be to project $\tilde{u}_t$ onto $\Delta^{d_x}$, but this breaks the convexity: while $c_t(y_t, u_t)$ is convex in $u_t$, it becomes non-convex in $M$ after projection.

To resolve this, we separate control \emph{updating} from \emph{execution}. Specifically, we use a convex \emph{surrogate loss} $e_t$ defined on $\mathbb{R}^{d_x}$ rather than $c_t$, allowing learning updates to occur in an unconstrained space. Projection is applied only at execution time, preserving convexity in parameter updates. This mirrors the philosophy of mirror descent, where updates occur in a dual space before mapping back to the primal space.

Our approach leverages a convex extension technique based on the \emph{Minkowski functional} (also known as the \emph{gauge function}). While the Minkowski functional is a classical concept in convex analysis \citep{rockafellar1970convex}, it has recently been used in projection-free online learning \citep{mhammedi2022efficient} and bandit convex optimization \citep{fokkema2024online}. Inspired by \cite{lattimore2024bandit}, we construct a convex extension function using this tool.

\paragraph{Definitions.} Throughout the following definitions, we let $\K\subset\mathbb{R}^d$ be a convex set satisfying $0\in\K$. We define the Minkowski functional as follows:

\begin{definition} [Minkowski functional]
The Minkowski functional $\pi_\mathcal{K}:\mathbb{R}^d\rightarrow\mathbb{R}$ associated with the set $\MK$ is defined as $\pi_{\K}(x)=\inf\{t\ge 1: x\in t\mathcal{K}\}$.
\end{definition}

Intuitively, $\pi_{\mathcal{K}}(x)$ is the smallest positive scaling of $\mathcal{K}$ required to enclose $x$. Since we are interested in cases where $x \notin \mathcal{K}$, we enforce $\pi_{\mathcal{K}}(x) \geq 1$. By construction, $\pi_{\mathcal{K}}$ is convex, and $\pi_{\mathcal{K}}(x) = 1$ if and only if $x \in \mathcal{K}$.
We now define two key constructs: the \emph{Minkowski projection} and the \emph{Minkowski extension}\footnote{We note that the definition of the Minkowski function is not unique, as shown in \citep[Section~3.7]{lattimore2024bandit}. Here we use a simple form not considered there, but sufficient for our purpose.}.

\begin{definition} [Minkowski projection]
The Minkowski projection to the set $\MK$ is a function mapping $\mathbb{R}^d\rightarrow\K$, given by $\Proj_{\Delta^{d_x}}^{MP}(x)= \frac{x}{\pi_{\K}(x)}$. 
\end{definition}

\begin{definition}[Minkowski extension]\label{def:minkowski-extension}
For any convex function $f:\K\rightarrow\mathbb{R}_+$, its Minkowski extension $e_f:\mathbb{R}^d\rightarrow\mathbb{R}_+$ with respect to the set $\MK$ is defined as
\begin{align}
\label{eq:extension}
e_f(x)=\pi_{\mathcal{K}}(x)\cdot f\left(\frac{x}{\pi_{\mathcal{K}}(x)}\right).
\end{align}
Here the notation $\MK$ is omitted on the LHS for conciseness.
\end{definition}

\subsection{Key Properties}\label{sec:property-minkowski}

The Minkowski extension satisfies several useful properties, making it well-suited as a surrogate loss function.

\begin{lemma} [Validity as surrogate loss]
\label{lem:extension-property}
Let $\K\subset\mathbb{R}^d$ be a convex set satisfying $0\in\K$, and $f:\K\rightarrow\mathbb{R}_+$ be a convex function. 
The Minkowski extension $e_f:\mathbb{R}^d\rightarrow\mathbb{R}_+$ from Definition~\ref{def:minkowski-extension} satisfies the following properties:
\begin{enumerate}
    \item $e_f(x)=f(x)$ for all $x\in \mathcal{K}$.
    \item $e_f(x)\ge f(\frac{x}{\pi_{\mathcal{K}}(x)})$ for all $x\in\R^d$.
    \item $e_f$ is convex on $\mathbb{R}^d$.
\end{enumerate}
\end{lemma}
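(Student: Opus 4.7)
The plan is to verify the three properties in order; Properties 1 and 2 are essentially unpacking of the definitions, while Property 3 carries the real work.

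For Property 1, if $x \in \K$ then $t = 1$ lies in the set $\{t \ge 1 : x \in t\K\}$ defining $\pi_{\K}(x)$, so $\pi_{\K}(x) = 1$ and the extension formula reduces to $e_f(x) = 1 \cdot f(x) = f(x)$. Property 2 follows from the built-in bound $\pi_{\K}(x) \ge 1$ together with $f \ge 0$: indeed, $e_f(x) = \pi_{\K}(x) \cdot f(x/\pi_{\K}(x)) \ge f(x/\pi_{\K}(x))$.

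For Property 3 I would combine two classical convex-analysis ingredients: (i) $\pi_{\K}$ is itself convex, since writing $\rho_{\K}(x) = \inf\{t > 0 : x \in t\K\}$ for the usual sublinear Minkowski gauge (well-defined because $0 \in \K$ and $\K$ is convex), one has $\pi_{\K} = \max(1, \rho_{\K})$, a pointwise maximum of convex functions; and (ii) the perspective $g(x, t) := t\, f(x/t)$ is jointly convex on $\BR^d \times (0, \infty)$ whenever $f$ is convex. Given $x_1, x_2 \in \BR^d$ and $\lambda \in [0, 1]$, set $t_i = \pi_{\K}(x_i)$, $z = \lambda x_1 + (1 - \lambda) x_2$, and $\bar t = \lambda t_1 + (1 - \lambda) t_2$. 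Joint convexity of $g$ applied at the points $(x_i, t_i)$ yields $\bar t \cdot f(z/\bar t) \le \lambda e_f(x_1) + (1 - \lambda) e_f(x_2)$, while convexity of $\pi_{\K}$ gives $\pi_{\K}(z) \le \bar t$.

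The main obstacle is closing the remaining gap to $e_f(z) = \pi_{\K}(z) \cdot f(z/\pi_{\K}(z))$ at the possibly smaller scale $\pi_{\K}(z) \le \bar t$. My approach would be to work epigraphically: the lifted set
\[
S = \{(x, s, t) : t \ge 1,\; x/t \in \K,\; s \ge t\, f(x/t)\} \subset \BR^d \times \BR \times \BR
\]
is convex, as an intersection of the perspective epigraph, the cone $\{(x, t) : x \in t\K,\, t > 0\}$, and the half-space $\{t \ge 1\}$. Its projection onto the $(x,s)$ coordinates is therefore convex, and the proof is completed by identifying this projection with $\mathrm{epi}(e_f)$---the subtle point being the precise identification of $e_f(x)$ with the correct value of $t\, f(x/t)$ on the lifted set $S$, which is where the particular radial structure of the Minkowski extension (through the scale $\pi_\K$ rather than an infimum over admissible scales) must be exploited, and which I expect to be the most technically delicate piece of the proof.
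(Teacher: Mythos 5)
Your treatment of Properties 1 and 2 is correct and matches the paper, which dismisses them as immediate from the construction. For Property 3, however, the step you flag as ``the most technically delicate piece'' is a genuine gap, and it cannot be closed: the projection of your lifted set $S$ onto the $(x,s)$-coordinates is the epigraph of $x\mapsto \inf\{t\,f(x/t): t\ge 1,\ x/t\in\mathcal{K}\}$, which is indeed convex but is in general \emph{strictly below} $e_f(x)=\pi_{\mathcal{K}}(x)\,f(x/\pi_{\mathcal{K}}(x))$, because $t\mapsto t\,f(x/t)$ need not attain its minimum over admissible $t$ at the smallest admissible value $t=\pi_{\mathcal{K}}(x)$. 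In fact, when $f(0)=0$ convexity forces $f(sx)\le s f(x)$ for $s\in[0,1]$, so $t\mapsto t\,f(x/t)$ is non-\emph{increasing} and the inequality your argument needs, $\pi_{\mathcal{K}}(z)f(z/\pi_{\mathcal{K}}(z))\le \bar t\, f(z/\bar t)$, points the wrong way. A concrete counterexample to the lemma as stated is $d=1$, $\mathcal{K}=[-1,1]$, $f(w)=w^2$: then $e_f(x)=x^2$ for $|x|\le 1$ and $e_f(x)=|x|$ for $|x|\ge 1$, and
\begin{align*}
e_f\left(\tfrac{0.5+2}{2}\right)=1.25>1.125=\tfrac{e_f(0.5)+e_f(2)}{2},
\end{align*}
so $e_f$ is not convex.

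For what it is worth, the paper's own proof of Property 3 stumbles at exactly the same point: after its algebra it needs $f(ay_1+by_2)\le \frac{a f(y_1)+b f(y_2)}{a+b}$ with weights satisfying $a+b=\frac{\pi_{\mathcal{K}}(x_1)+\pi_{\mathcal{K}}(x_2)}{2\pi_{\mathcal{K}}((x_1+x_2)/2)}\ge 1$; convexity of $f$ only yields that bound at the rescaled point $\frac{a}{a+b}y_1+\frac{b}{a+b}y_2$, and passing from there to $ay_1+by_2=(a+b)\bigl(\frac{a}{a+b}y_1+\frac{b}{a+b}y_2\bigr)$ is the same unjustified radial monotonicity. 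So your instinct to isolate this step was exactly right; what is missing is not a cleverer argument but an additional hypothesis on $f$ (for instance that $t\mapsto t f(x/t)$ be non-decreasing for $t\ge \pi_{\mathcal{K}}(x)$, equivalently $\langle g,y\rangle\le f(y)$ for boundary points $y$ and subgradients $g$ of $f$ at $y$, which holds e.g.\ for affine nonnegative $f$).
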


Another important aspect is the Lipschitzness of the Minkowski extension, such that using it as a surrogate loss is tractable from the perspective of optimization. We will first require a curvature property on $\mathcal{K}$.

\begin{definition} [$\kappa$-aspherity]
\label{def:aspherity}
Given $d\in\mathbb{N}$, a convex set $\K\subset\mathbb{R}^d$ is said to have $\kappa$-aspherity for some $\kappa\ge 1$ if $\exists r, R>0$ (without loss of generality, assume that $r\le 1\le R$) such that $\kappa=\frac{R}{r}$ and
\begin{align*}
r\mathbb{B}^d\subset\K\subset R\mathbb{B}^d, 
\end{align*}
where $\mathbb{B}^d=\{x\in\mathbb{R}^d\mid \|x\|_2\le 1\}$ denotes the unit ball in $\mathbb{R}^d$.  
\end{definition}

\begin{lemma} [Lipschitzness of extension function]
\label{lem:extension-lip}
Let $\K\subset\mathbb{R}^d$ be a convex set with $\kappa$-aspherity, and $f:\K\rightarrow\mathbb{R}_+$ be a convex, $L_f$-Lipschitz function (w.r.t. $\|\cdot\|$ satisfying $\|\cdot\|\ge \|\cdot\|_2$) that attains $0$ on $\K$. Let $D_{\K}$ be the diameter of $\K$ in $\|\cdot\|$.  Then the extension fucntion $e_f$ is $(4DL_f\kappa^4+\frac{D_{\K}L_f}{r})$-Lipschitz on $D\mathbb{B}^d$, $\forall D>0$. When $R\ge 1$, $e_f$ is $(4DL_f\kappa^4+D_{\K}L_f\kappa)$-Lipschitz w.r.t. $\|\cdot\|_2$.
\end{lemma}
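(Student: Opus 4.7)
The plan is to decompose $e_f(x)-e_f(y)$ into pieces controlled separately by the Lipschitzness of $f$ and that of $\pi_\K$, and then use the $\kappa$-aspherity to reconcile the two norms $\|\cdot\|$ and $\|\cdot\|_2$. Writing $a=x/\pi_\K(x)$ and $b=y/\pi_\K(y)$, which both lie in $\K$, the product-rule-style identity
\begin{align*}
e_f(x)-e_f(y) \;=\; \pi_\K(x)\bigl(f(a)-f(b)\bigr) \;+\; \bigl(\pi_\K(x)-\pi_\K(y)\bigr) f(b)
\end{align*}
splits the quantity of interest into the two ingredients.

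The second summand is the easy part. Since $r\mathbb{B}^d\subset\K$, a standard Minkowski-gauge argument shows $\pi_\K$ is $1/r$-Lipschitz in $\|\cdot\|_2$; and since $f$ attains $0$ on $\K$, is $L_f$-Lipschitz in $\|\cdot\|$, and $0\in\K$ implies $\|b\|\le D_\K$ for $b\in\K$, we obtain $f(b)\le L_f D_\K$. Multiplying these two estimates yields the $D_\K L_f/r$ contribution in the first claim, and when $R\ge 1$ the inequality $1/r\le R/r=\kappa$ upgrades this to $D_\K L_f\kappa$, matching the stated constant for the $\|\cdot\|_2$-Lipschitz form.

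The first summand requires bounding $\pi_\K(x)\|a-b\|$. Starting from the algebraic identity
\begin{align*}
\pi_\K(x)\pi_\K(y)(a-b) \;=\; \pi_\K(y)(x-y) \;+\; \bigl(\pi_\K(y)-\pi_\K(x)\bigr) y,
\end{align*}
dividing through by $\pi_\K(y)$ and using $\|y\|/\pi_\K(y)=\|b\|\le D_\K$ reduces the task to the Lipschitz estimate on $\pi_\K$ already at hand, together with an estimate of $\pi_\K(x)$ itself; the latter follows on the ball $D\mathbb{B}^d$ from $\pi_\K(x)\le\max(1,\|x\|_2/r)=O(D/r)$. Feeding $\|a-b\|$ into the $L_f$-Lipschitz bound for $f$ in $\|\cdot\|$ and propagating the resulting factors through the $\kappa$-aspherity inclusions $r\mathbb{B}^d\subset\K\subset R\mathbb{B}^d$ is then what generates the leading $4DL_f\kappa^4$ term.

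The main obstacle I anticipate is the bookkeeping of norm conversions. The Lipschitz constant of $f$ is stated in $\|\cdot\|$, the Lipschitz constant of $\pi_\K$ lives naturally in $\|\cdot\|_2$, and the final statement demands a single cleanly-stated constant; each time these two norms must be bridged, a $\kappa$-dependent factor is paid, and the $\kappa^4$ in the leading term reflects several such chained conversions (roughly: one for embedding $\|a-b\|_2$ into $\|a-b\|$, one for $\pi_\K(x)\lesssim D/r$, and two for the aspherity-constrained identifications between norms on $\K$). The second claim is then recovered by restricting to $R\ge 1$, which lets $\kappa$ absorb the stray $1/r$ factor and converts everything into a clean $\|\cdot\|_2$-Lipschitz estimate.
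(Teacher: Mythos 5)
Your proposal is correct in outline and shares the paper's top-level decomposition
\begin{equation*}
e_f(x)-e_f(y)=\pi_{\mathcal{K}}(x)\bigl(f(a)-f(b)\bigr)+\bigl(\pi_{\mathcal{K}}(x)-\pi_{\mathcal{K}}(y)\bigr)f(b), \qquad a=\tfrac{x}{\pi_{\mathcal{K}}(x)},\ b=\tfrac{y}{\pi_{\mathcal{K}}(y)},
\end{equation*}
as well as its treatment of the second summand ($\pi_{\mathcal{K}}$ is $1/r$-Lipschitz, $f(b)\le L_fD_{\mathcal{K}}$), but you handle the first summand by a genuinely different and more elementary route. The paper bounds $\|a-b\|_2$ by invoking Lemma~\ref{thm:proj-lip}, a standalone geometric fact that the Minkowski projection is $4\kappa^3$-non-expanding, proved via a three-case analysis with the law of cosines; the $\kappa^4$ in the final constant is then $\pi_{\mathcal{K}}(x)\le D\kappa$ multiplied by that $4\kappa^3$ --- it does not arise from chained norm conversions as your closing discussion suggests. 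Your identity $\pi_{\mathcal{K}}(x)(a-b)=(x-y)+(\pi_{\mathcal{K}}(y)-\pi_{\mathcal{K}}(x))b$, combined with $\pi_{\mathcal{K}}(x)\ge 1$, the $1/r$-Lipschitzness of the gauge, and $\|b\|\le D_{\mathcal{K}}$, gives $\pi_{\mathcal{K}}(x)\,\|a-b\|\le\|x-y\|+(D_{\mathcal{K}}/r)\|x-y\|_2$ in two lines, bypassing the geometric lemma entirely and in fact yielding a smaller, $D$-independent constant for this term (of order $L_f(1+D_{\mathcal{K}}/r)$ rather than $4DL_f\kappa^4$); you should therefore state that constant explicitly and note the easy comparison with the lemma's constant (immediate for $D\ge 1$), rather than asserting that the bookkeeping reproduces $4DL_f\kappa^4$. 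The one step you flag as an obstacle --- bridging $\|\cdot\|$ and $\|\cdot\|_2$ when feeding $\|a-b\|$ into the Lipschitz bound for $f$ --- is a genuine loose end, but the paper's own proof has it in exactly the same place (the hypothesis $\|\cdot\|\ge\|\cdot\|_2$ points the wrong way for that conversion), so it is inherited rather than introduced by your argument.
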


First, $e_f$ is only Lipschitz around the set $\MK$, which is fine since the underlying unconstrained algorithm stays around $\MK$. Second, the Lipschitz constant increases with the aspherity of $\MK$. This motivates our reparameterization on the control set described next.

\subsection{Application to Control}\label{sec:control-using-minkowski}
To apply Minkowski extensions in non-stochastic control, we must adapt the simplex constraint because $\Delta^{d_x}$ doesn't contain 0 and has $\kappa=0$ in $\mathbb{R}^{d_x}$. The following

 linear bijective map $\phi$ from $\Delta^{d_x}$ to $\mathbb{R}^{d_x-1}$ re-centers the simplex in a low-dimensional space.
\begin{align}
\label{eq:phi-func}
    \forall u\in\Delta^{d_x}, \phi(u)= (u_1,\dots,u_{d_x-1})-\frac{\mathbf{1}_{d_x-1}}{2(d_x-1)}.
\end{align}

\begin{definition} [Control set]
\label{def:observation-control-set}
Given $d_x\in\mathbb{N}$, $d_x\ge 2$, the feasible control class is given by  $\Delta^{d_x}$.
Additionally, define a lower-dimensional class $\K\subset\mathbb{R}^{d_x-1}$ as the following:
\begin{align}
\label{eq:pair-constrained-set}
    \K=\left\{v-\frac{\mathbf{1}_{d_x-1}}{2(d_x-1)}:v\in\mathbb{R}^{d_x-1},v\ge 0, \|v\|_1\le 1\right\}.
\end{align}
\end{definition}

\begin{lemma}[Properties of the control set]
\label{cor:lip-control}
Let $\K$ be defined in Definition~\ref{def:observation-control-set}, and let $\pi_{\mathcal{K}}$ be the associated Minkowski functional. Then $\pi_{\mathcal{K}}$ satisfies the following properties:
\begin{enumerate}
\item $\pi_{\mathcal{K}}$ is $2d_x$-Lipschitz on $\mathbb{R}^{d_x-1}$. 
\item The extension $e_f$ of a convex $L_f$-Lipschitz (w.r.t. $\|\cdot\|_1$) function $f:\K\rightarrow\mathbb{R}_+$ is $O(DL_fd_x^4)$-Lipschitz (w.r.t. $\|\cdot\|_2$) over $D\mathbb{B}^{d_x-1}$ if $f$ is bounded by $B$ on $D\mathbb{B}^{d_x-1}$ for $D\ge 1$.
\end{enumerate}
\end{lemma}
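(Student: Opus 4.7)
The plan is to derive both parts of Lemma~\ref{cor:lip-control} from Lemma~\ref{lem:extension-lip} after explicitly computing the inscribed and circumscribed Euclidean balls of the shifted simplex $\K$ defined in~\eqref{eq:pair-constrained-set}.

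First I would verify $0 \in \K$ by choosing $v = \frac{\mathbf{1}_{d_x-1}}{2(d_x-1)}$, which satisfies $v \geq 0$ and $\|v\|_1 = 1/2 \leq 1$. Rewriting $\K = \bigl\{u \in \mathbb{R}^{d_x-1} : u_i \geq -\tfrac{1}{2(d_x-1)}\text{ for all }i,\; \sum_i u_i \leq 1/2\bigr\}$, a vector $u$ with $\|u\|_2 \leq r$ satisfies $|u_i| \leq r$ and $|\sum_i u_i| \leq \sqrt{d_x-1}\,r$, so $r\mathbb{B}^{d_x-1} \subset \K$ iff $r \leq \min\{\tfrac{1}{2(d_x-1)}, \tfrac{1}{2\sqrt{d_x-1}}\}$, with the entrywise constraint tighter; hence $r = \tfrac{1}{2(d_x-1)}$. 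The triangle inequality gives $\|v - \tfrac{\mathbf{1}_{d_x-1}}{2(d_x-1)}\|_2 \leq \|v\|_2 + \tfrac{1}{2\sqrt{d_x-1}} \leq 2$, so $R = 2$ works, and $\K$ has $\kappa$-aspherity with $\kappa \leq 4(d_x-1) \leq 4d_x$.

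For part~1, I would use the elementary identity $\pi_\K(x) = \max\{1, \mu_\K(x)\}$, where $\mu_\K(x) = \inf\{t > 0 : x \in t\K\}$ is the unclipped Minkowski functional. Because $r\mathbb{B}^{d_x-1} \subset \K$ and this ball is symmetric, sublinearity of $\mu_\K$ combined with $\mu_\K(z) \leq \|z\|_2/r$ yields $|\mu_\K(x) - \mu_\K(y)| \leq \|x - y\|_2/r$; composition with the $1$-Lipschitz map $t \mapsto \max\{1, t\}$ preserves this bound, so $\pi_\K$ is $1/r = 2(d_x - 1) \leq 2d_x$-Lipschitz in $\ell_2$, and thus in $\ell_1$ as well since $\|\cdot\|_2 \leq \|\cdot\|_1$.

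For part~2, I would plug the above into Lemma~\ref{lem:extension-lip}. Since $\|\cdot\|_1 \geq \|\cdot\|_2$, the Lemma's Lipschitz hypothesis on $f$ is satisfied by $f$ being $L_f$-Lipschitz in $\ell_1$, and with $R = 2 \geq 1$ the Lemma gives $e_f$ as $\bigl(4DL_f\kappa^4 + D_\K L_f \kappa\bigr)$-Lipschitz in $\ell_2$ on $D\mathbb{B}^{d_x-1}$. The $\ell_1$-diameter of $\K$ is at most $2$ (since $\|(v_1 - c) - (v_2 - c)\|_1 = \|v_1 - v_2\|_1 \leq \|v_1\|_1 + \|v_2\|_1 \leq 2$), so substituting $\kappa \leq 4d_x$ and $D_\K \leq 2$ collapses the bound to $O(DL_f d_x^4)$ for $D \geq 1$. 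The main bookkeeping obstacle I anticipate is the mismatch between Lemma~\ref{lem:extension-lip}'s assumption that $f$ attains $0$ on $\K$ and the present boundedness-by-$B$ hypothesis; this can be reconciled by replacing $f$ with $\tilde f = f - \min_\K f \geq 0$ and noting $e_f(x) = e_{\tilde f}(x) + (\min_\K f)\pi_\K(x)$, where the additive term is $O(B d_x)$-Lipschitz by part~1 and thus absorbable into $O(DL_f d_x^4)$ under mild polynomial scaling of $B$.
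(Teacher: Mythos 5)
Your proof is correct and takes essentially the same route as the paper: establish that $\K$ is $O(d_x)$-aspherical (the paper's Lemma~\ref{lem:transformation} records $r=1/(2d_x)$, $R=1$, $\kappa=2d_x$, while your $r=\tfrac{1}{2(d_x-1)}$, $R=2$ give the same orders), obtain part~1 from the $1/r$-Lipschitzness of the gauge, and obtain part~2 by plugging $\kappa=O(d_x)$ and $D_{\K}\le 2$ into Lemma~\ref{lem:extension-lip}. The only differences are that you prove the gauge's $1/r$-Lipschitzness directly via sublinearity rather than citing it, and that you explicitly reconcile the ``attains $0$'' hypothesis of Lemma~\ref{lem:extension-lip} with the boundedness-by-$B$ hypothesis here---a mismatch the paper's proof passes over silently (harmlessly so in its application, where $f=c_t^u\circ\phi^{-1}$ does attain $0$ by Assumption~\ref{asm:po-convex-loss}).
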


\section{Controlling under Partial Observability}
\label{sec:po-prelim-and-approx}

In this section, we give an efficient online controller $\GPCPOS$ for partially observable simplex LDS, which has provable regret guarantees against a particular class of Markov linear policies. 
We begin with a standard assumption on the cost functions analogous to Assumption 1 in \citep{golowich2024online}:

\begin{assumption}[Lipschitz, decomposable convex cost]
\label{asm:po-convex-loss}
The cost functions $c_t:\Delta^{d_y}\times \Delta^{d_x} \to \BR_+$ take the form \footnote{$c_t$ is assumed to be separated for variables $y,u$, which is standard (e.g. in LQR, \citep{simchowitz2020naive}).} 
\begin{align*}
c_t(y,u)=c_t^y(y)+c_t^u(u),
\end{align*}
where $c_t^y:\Delta^{d_y}\rightarrow\mathbb{R}_+, c_t^u:\Delta^{d_x}\rightarrow\mathbb{R}_+$ are convex and $L$-Lipschitz in the following sense: for all $y,y'\in\Delta^{d_y}$ and $u,u'\in\Delta^{d_x}$, we have
\begin{align*}
|c_t^y(y)-c_t^y(y)|\le L\cdot \|y-y'\|_1, \ \ \ |c_t^u(u)-c_t^u(u')|\le L\cdot \|u-u'\|_1.
\end{align*}
Without loss of generality, assume that $L\ge 1$ and $\exists (y,u)\in\Delta^{d_y}\times\Delta^{d_u}$ such that $c_t(y,u)=0$. 
\end{assumption}

We define the class of policies against which we prove regret bounds. These are valid linear policies that “mix” the state of the system, akin to the fully observable setting.

\begin{definition} [Comparator policy class]
\label{def:po-comparator}
Let $\ML$ be a partially observable simplex LDS with transition matrices $A,B \in \mathbb{S}^{d_x}$, observation matrix $C \in \mathbb{S}^{d_y, d_x}$, and control power $\alpha \in [0,1]$. Given a matrix $K\in\mathbb{S}^{d_x, d_y}$, define \[\mathbb{A}_K^{\alpha}:=(1-\alpha)A+\alpha BKC\in\mathbb{S}^{d_x}.\]
We define the comparator policy class $\Ksim_\tau = \Ksim_\tau(\ML)$ as the set of linear, time-invariant policies $u_t=K y_t$ such that $K\in\mathbb{S}^{d_x,d_y}$ satisfies $\tmix(\mathbb{A}_{K}^{\alpha})\le \tau$. Here
\begin{align*}
t^{\mathrm{mix}}(\mathbb{A}_{K}^{\alpha})=\min_{t\in\mathbb{N}}\left\{t: \quad \forall t'\ge t, \sup_{p\in\Delta^{d_x}}\|(\mathbb{A}_{K}^{\alpha})^t(p)-\pi(\mathbb{A}_{K}^{\alpha})\|_1\le \frac{1}{4}\right\}\le \tau,
\end{align*}
where $\pi(\mathbb{A}_{K}^{\alpha})$ denotes the stationary of $\mathbb{A}_{K}^{\alpha}$.
\end{definition}

For any $K \in \BS^{d_x, d_y}$ and partially observable simplex LDS $\ML$, we let $(y_t(K))_t$ and $(u_t(K))_t$ denote the counterfactual sequences of observations and controls that would have been obtained by following the time-invariant, linear policy $u_t=K y_t$ in $\ML$.
Our result for controlling partially observable simplex with Markov linear comparator policy class is stated below
(we omit the dependence on the system $\ML$, since it is always clear from context).

\begin{theorem} [PO regret: Markov comparator policy class]
\label{thm:po-main} 
Let $d_x,d_y, T\in\BN$ with $d_y\leq d_x$. %Let $\ML=(A,B, C, x_1,(\gamma_t)_{t\in\BN},(w_t)_{t\in\BN},(c_t)_{t\in\BN}, \alpha)$ be a partially observable simplex LDS with cost functions $(c_t)_t$ satisfying \cref{asm:po-convex-loss} for some $L > 0$. 
Set $H := 2\tau \lceil \log(512d_x^{7/2}LT^2) \rceil$, $\eta=1/(Ld_x^{4.5}H^{2}\sqrt{T})$. Then for any $\tau > 0$, the iterates  $(y_t, u_t)$ of $\GPCPOS$ (\cref{alg:gpc-po}) with input $(A, B, C, \tau, H,T)$ have the following policy regret guarantee with respect to $\Ksim_\tau(\ML)$: \begin{align*}
\sum_{t=1}^T c_t(y_t,u_t)-\min_{K^{\star}\in\Ksim_\tau(\ML)}\sum_{t=1}^T c_t(y_t(K^{\star}),u_t(K^{\star}))\le \tilde{O}\left(L\tau^{4}d_x^{6.5}\sqrt{T}\right),
\end{align*}
where $\tilde{O}(\cdot)$ hides all universal constants and poly-logarithmic dependence in $T$. 
\end{theorem}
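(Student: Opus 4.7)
My plan is to follow the standard \emph{GPC + OCO-with-memory} template, but instrumented with the Minkowski extension surrogate loss of \cref{sec:min} (to handle the simplex constraint on the output) and with a new signal construction based on a constant policy $u_t = K_0 y_t$ for a carefully chosen stochastic $K_0$ (to replace the ``nature's $y$'' construction, which required zero control and is infeasible here). Concretely, let $z_t$ denote the counterfactual observation at time $t$ under the fixed policy $u_s = K_0 y_s$ for all $s < t$, and consider the controller class
\begin{equation*}
u_t(M) = K_0 y_t + \sum_{h=1}^{H} M_h \bigl(y_{t-h} - z_{t-h}\bigr),
\end{equation*}
where each $M_h$ is an unconstrained matrix and the raw output $\tilde{u}_t(M)\in\mathbb{R}^{d_x}$ need not lie in $\Delta^{d_x}$. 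The executed control is the Minkowski projection of $\phi(\tilde{u}_t(M))$ onto the re-centered set $\MK$ from \cref{def:observation-control-set}, while the surrogate loss used in the OCO update is the Minkowski extension from \cref{def:minkowski-extension}.

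\paragraph{Decomposition of the regret.} I would split the regret into three familiar terms:
\begin{equation*}
\breg_{\MA}(\Ksim_\tau,T) \;\le\; \underbrace{(\text{approximation})}_{\text{(I)}} + \underbrace{(\text{truncation/memory})}_{\text{(II)}} + \underbrace{(\text{OCO-with-memory regret})}_{\text{(III)}}.
\end{equation*}
For (I), the key lemma to prove is that for every comparator $K^\star \in \Ksim_\tau$ there exists $M^\star = (M_1^\star,\dots,M_H^\star)$ such that the trajectory $(y_t(M^\star),u_t(M^\star))$ of the $M$-parameterized controller tracks $(y_t(K^\star),u_t(K^\star))$ up to error $O(1/T)$ per step. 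The natural candidate is a telescoping expansion of $(K^\star - K_0) y_t(K^\star)$ in terms of differences $y_{t-h} - z_{t-h}$, truncated at $H = \Theta(\tau \log T)$: the residual decays like $\|(\Cr{K^\star}^\alpha)^H\|_{1\to 1}$, which by the mixing assumption $\tmix(\Cr{K^\star}^\alpha)\le \tau$ and the choice of $H$ is at most $T^{-\Omega(1)}$. Since the cost is $L$-Lipschitz in both arguments in $\|\cdot\|_1$, summing over $t$ yields $(\text{I}) = \tilde{O}(1)$. For (II), a standard unrolling (using that $\Cr{K_0}^\alpha$ also mixes fast once $K_0$ is a stochastic matrix) shows $c_t$ depends on $M_{t-H:t}$ up to exponentially small error.

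\paragraph{OCO-with-memory step and wiring up \cref{sec:min}.} For (III), I would run online gradient descent with step size $\eta = 1/(Ld_x^{4.5}H^2\sqrt{T})$ on the truncated loss $e_t(M_{t-H:t})$ defined by composing $c_t$ with the Minkowski-extended controller. Three ingredients are needed. First, $e_t$ is convex in $M$ because the extension of a convex function is convex on $\mathbb{R}^d$ (\cref{lem:extension-property}(3)) and $\tilde{u}_t(M)$ is linear in $M$. Second, the Lipschitz constant of $e_t$ w.r.t.\ $M$ is controlled by \cref{cor:lip-control}(2), giving an $O(Ld_x^4)$ factor times the $\ell_2$-diameter of the iterate set; this is where the $d_x^{4.5}$ and $\kappa^4$-type factors in the final bound come from. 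Third, switching from $e_t$ back to $c_t$ at execution time is free by \cref{lem:extension-property}(1), since the executed control lies on $\MK$. Combining the $H$-memory OGD bound $\tilde{O}(L_e \sqrt{H^3 T})$ (where $L_e$ is the Lipschitz constant from \cref{cor:lip-control}) with the bounds on (I) and (II) gives the claimed $\tilde{O}(L\tau^4 d_x^{6.5}\sqrt{T})$.

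\paragraph{Main obstacle.} The subtle step is (I): constructing the $M^\star$ that approximates a given $K^\star$ via signals \emph{that are themselves generated under a different policy $K_0$}. Unlike the zero-baseline approach of \cite{simchowitz2020improper}, here $y_t - z_t$ already bakes in a $K_0$-correction at every past time step, so the Markov-comparator action $K^\star y_t$ must be rewritten as $K_0 y_t$ plus a Neumann-style series in $(\Cr{K^\star}^\alpha - \Cr{K_0}^\alpha)$ applied to these differences. Verifying that such a series (i) truncates at $H = \tilde{O}(\tau)$ with error $O(1/T)$ using only the mixing time of $\Cr{K^\star}^\alpha$ (not of $\Cr{K_0}^\alpha$, which we cannot assume), and (ii) yields matrices $M_h^\star$ whose $\ell_2$-norm grows only polynomially in $d_x$ so that the OGD diameter stays controlled, is where I expect to spend most of the effort; everything else is a careful but mechanical composition of the three ingredients above.
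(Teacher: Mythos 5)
There is a genuine gap, and it is in the controller parameterization itself rather than in any of the estimates. You define the learned policy as $u_t(M) = K_0 y_t + \sum_{h=1}^{H} M_h (y_{t-h} - z_{t-h})$ with $z_t = y_t(K_0)$ the counterfactual observation under the base policy. This class is degenerate: since the system is strictly causal, $y_1 = z_1$, hence $u_1 = K_0 y_1 = K_0 y_1(K_0)$, hence $x_2 = x_2(K_0)$ and $y_2 = z_2$, and by induction $y_t \equiv z_t$ and $u_t \equiv K_0 y_t$ for every $t$ and every choice of $M$. Every controller in your class executes exactly the policy $K_0$, so the approximation step (I) that you correctly identify as the crux cannot succeed for any comparator $K^\star \neq K_0$, and the regret against $\Ksim_\tau$ is linear in general. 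The difference $y_t - z_t$ is the right object for \emph{recovering} the oblivious signal from the observed trajectory (this is exactly Line~\ref{line:compute-signal} of \cref{alg:gpc-po}, read in reverse), but it cannot serve as the input to the learned linear maps. The paper instead feeds the oblivious signals themselves into the controller, $\tilde u_t = M^{[0]} o_t + \alpha\sum_{i=1}^{H}\bar\lambda_{t,i}M^{[i]}o_{t-i}$ with $o_s = y_s(K_0)$, under the structural constraints $M^{[0]}\in\BS^{d_x,d_y}$ and $M^{[i]}\in\BS_0^{d_x,d_y}$ (zero column sums) so that the raw control has entries summing to one; the comparator $K^\star$ is then realized exactly (before truncation) by $M^{[0]}=K^\star$ and $M^{[i]}=K^\star C(\mathbb{A}_{K^\star}^{\alpha})^{i-1}B(K^\star-K_0)$ via the identity $X^n=Y^n+\sum_{i=1}^{n}X^{i-1}(X-Y)Y^{n-i}$ --- which is the Neumann-style series you anticipate, but applied to the signals $o_{t-i}$ rather than to the vanishing differences.

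A secondary but real error: in step (II) you assert that $\mathbb{A}_{K_0}^{\alpha}$ ``mixes fast once $K_0$ is a stochastic matrix.'' This is false --- $K_0$ is an arbitrary stochastic matrix in the algorithm and no mixing of $\mathbb{A}_{K_0}^{\alpha}$ is assumed or provable. The truncation error in the approximation step is controlled by the mixing of $\mathbb{A}_{K^\star}^{\alpha}$ alone (applied to the difference of two probability vectors), and the memory-mismatch bound requires a separate case analysis on $\tmix(A)$: either $\tmix(A)\le 4\tau$ and one uses the mixing of $A$ directly, or $\tmix(A)>4\tau$, in which case one derives the lower bound $\alpha > 1/(128\tau)$ by contradiction and uses geometric decay in $(1-\alpha)$. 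The remaining ingredients of your plan --- the three-term regret decomposition, the Minkowski extension surrogate with convexity from \cref{lem:extension-property} and Lipschitzness from \cref{cor:lip-control}, and OGD with the stated step size --- do match the paper's proof, but they cannot rescue the argument without replacing the parameterization.
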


\subsection{Our Algorithm}
\label{sec:po-alg}

The algorithm $\GPCPOS$ (\cref{alg:gpc-po}) is in broad strokes similar to $\GPCS$ \citep{golowich2024online}; it uses online convex optimization over a class of policies that is expressive enough to approximate any policy in the comparator class.

The general idea is the following: although the comparator class consists of only linear Markov policies $u_t=K y_t$, we can't directly learn this $K$ by OCO because $y_t$ is non-oblivious. Instead, the non-stochastic control approach reparameterizes the control $u_t$ as a linear combinations of computable oblivious signals $o_t$ (Line~\ref{line:control-po-utot} in Algorithm~\ref{alg:gpc-po}) with small memory $H$. Then because $c_t$ is convex in $u_t$, it's also convex in the control parameters $M$, reducing the original control problem to a tractable OCO problem over $M$. As long as the comparator class is a subset of the learning class, any regret bound of this OCO problem directly transfers to the control comparator class of interest.

To state the algorithm precisely, we need to define the optimization domain $\MM$ (analogous to $\Xgpc[d,H,a_0,\alphaub]$ in $\GPCS$) and the correspondence between points in the domain and policies. First, the optimization domain $\MM_{d_x,d_y,H}$ is defined below:

\begin{algorithm}[t]
\caption{$\GPCPOS$}
\label{alg:gpc-po}
\begin{algorithmic}[1]
\REQUIRE Transition matrices $A,B \in \BS^{d_x}$, observation matrix $C \in \BS^{d_y,d_x}$, horizon parameter $H \in \BN$, control parameter $\alpha\in[0,1]$, step size $\eta>0$.  Let $\K$ be defined as in \cref{eq:pair-constrained-set}, and $\pi_{\mathcal{K}}$ be the Minkowski functional associated with it.  
Denote as $\phi^{-1}$ the inverse of $\phi$ when restricted to $\Delta^{d_x}$.

\STATE Initialize $M_1^{[0:H]}\in \MM := \MM_{d_x,d_y,H}$ (Definition~\ref{def:learning-class}). Choose any $K_0\in\mathbb{S}^{d_x,d_y}$. \label{line:choose-k0}
\STATE Observe the initial observation $y_1$.
\FOR{$t=1,\cdots,T$}
    \STATE Compute $o_t := y_t-\sum_{i=1}^{t-1}\bar{\lambda}_{t,i}C[(1-\alpha)A]^{i-1}\alpha B(u_{t-i}-K_0 o_{t-i})$. \label{line:compute-signal}
    \STATE Let $\tilde{u}_t=M_t(o_{t:t-H})=M_{t}^{[0]}o_t+\sum_{i=1}^H \bar{\lambda}_{t,i}M_{t}^{[i]}o_{t-i}$.\label{line:control-po-utot}
    \STATE Choose control $u_t=\phi^{-1}(\phi(\tilde{u}_t)/\pi_{\mathcal{K}}(\phi(\tilde{u}_t))$.\label{line:control-po}
    \STATE Receive cost $c_t(y_t,u_t)=c_t^y(y_t)+c_t^u(u_t)$, and observe $y_{t+1}$ and $\gamma_t$.
    \STATE Define surrogate loss function 
\begin{align*}
e_t(M) := c_t^y(y_t(M; K_0))+ \pi_{\mathcal{K}}(\phi(\tilde{u}_t(M;K_0))) \cdot c_t^u\left(\phi^{-1}\left(\frac{\phi(\tilde{u}_t(M;K_0))}{\pi_{\mathcal{K}}(\phi(\tilde{u}_t(M;K_0))}\right)\right).
\end{align*} \label{line:et-loss}
    \STATE Set $M_{t+1} := \Pi_{\mathcal{M}}^{\|\cdot\|_2}\left[M_t-\eta \cdot \partial e_t(M_t)\right]$, where $\Pi_{\mathcal{M}}^{\|\cdot\|_2}$ denotes the $\ell_2$ projection and $\partial e_t(M_t)$ is the subgradient of $e_t$ evaluated at $M_t$. \label{line:po-update}
\ENDFOR{}
\end{algorithmic}
\end{algorithm}

\begin{definition} [Learning policy class] 
\label{def:learning-class} 
Let $d_x,d_y,H \in \BN$ and let $\BS_{0}^{d_x,d_y}$ be the set of matrices $K\in[-1,1]^{d_x\times d_y}$ such that $\sum_{i=1}^{d_x} K_{ij} = 0$ for all $j \in [d_y]$. Then let $\MM_{d_x,d_y,H}$ be the set of tuples $M^{[0:H]} = (M^{[0]},\dots,M^{[H]})$ defined as follows:
\begin{align}
\label{eq:M-class}
\MM_{d_x,d_y,H} := \{M^{[0:H]}\mid M^{[0]}\in\BS^{d_x,d_y} \text{ and } M^{[i]}\in\BS_{0}^{d_x,d_y} \text{ for all }  i \in [H]\}.
\end{align}
\end{definition}

For dimension parameters $d_x,d_y \in \BN$ and history length $H \in \BN$, the domain $\MM_{d_x,d_y,H}$ parametrizes a class of policies $\{\pi^M: M \in \MM_{d_x,d_y,H}\}$ where $\pi^M$ is defined as follows:

\begin{definition} [Learning policy]
\label{def:learning-policy}
Fix $d_x,d_y \in \BN$ and $M = M^{[0:H]} \in \MM_{d_x,d_y,H}$. We define a policy $\pi^M$ that for any time $t$, given $\gamma_{t-1},\dots,\gamma_{t-H}$ and \emph{signals} $o_t,\dots,o_{t-H} \in \Delta^{d_y}$, produces the control
\begin{align*}
\pi^M_t(o_{t:t-H}) := \phi^{-1}\left(\Proj_{\Delta^{d_x}}^{MP}\left[\phi\left(M^{[0]}o_t+\alpha\sum_{i=1}^{H}\bar{\lambda}_{t,i}M^{[i]}o_{t-i}\right)\right]\right)
\end{align*}
where $\bar\lambda_{t,i}$ is defined for $t \in [T]$ and $i \in \BN^+$, 
\begin{align}
\lambda_{t,i} := \gamma_{t-i} \cdot \prod_{j=1}^{i-1} (1-\gamma_{t-j}), \qquad \bar \lambda_{t,i} := \prod_{j=1}^{i} (1-\gamma_{t-j})\label{eq:define-lambdas}, \qquad \lambda_{t,0} := 1 - \sum_{i=1}^H \lambda_{t,i}.
\end{align}
%and $\Proj_{\Delta^{d_x}}^{MP}: \BR^{d_x} \to \Delta^{d_x}$ denotes the Minkowski projection operator onto $\Delta^{d_x}$,\zz{let's define this in Sec 2?} and $\phi$ denotes the linear transformation \cref{eq:phi-func}. 
\end{definition}

\iffalse
where $u_t(M; K_0)$ is the control at time $t$ according to $M$ with signals $y_{t-H:t}(K_0)$, given by
\begin{align*}
u_t(M;K_0)=u_t(M^{[0:H]};K_0)=\Pi_{\Delta^{d_x}}^{\|\cdot\|_2}\left[M^{[0]}y_t(K_0)+\sum_{i=1}^{H}\bar{\lambda}_{t,i}M^{[i]}y_{t-i}(K_0)\right],
\end{align*}

\fi

In the above definitions, note that (1) $\mathcal{M}_{d_x,d_y,H}$ is convex, and (2) the projection step in the definition of $\pi^M$ is necessary (in order for $\pi^M(o_{t-1:t-H})$ to be a valid control, i.e. in $\Delta^{d_x}$) since the vector before projection has indices summing to $1$ but is not necessarily entry-wise non-negative.

\paragraph{Loss Function.}
For notational convenience, for any $M \in \MM_{d_x,d_y,H}$, $K_0 \in \BS^{d_x,d_y}$ and $H \in \BN$, we write
\begin{align*}
\tilde{u}_t(M;K_0) &:= M^{[0]}y_t(K_0) + \alpha \sum_{i=1}^H \bar\lambda_{t,i} M^{[i]} y_{t-i}(K_0), \\ 
y_t(M;K_0) &:= \sum_{i=1}^{t-1}\bar\lambda_{t,i} C((1-\alpha)A)^{i-1}\alpha B u_{t-i}(M;K_0) + \sum_{i=1}^t \lambda_{t,i} ((1-\alpha)A)^{i-1} w_{t-i}.
\end{align*}

By unfolding, $y_t(M;K_0)$ is exactly the counterfactual observation at time $t$ if control $u_s(M;K_0) = \pi^M(y_{s:s-H}(K_0))$ had been chosen at time $s$, for each $1 \leq s \leq t$. We then define the surrogate loss function at time $t$ to be
\begin{align}
\label{eq:et-def}
e_t(M) := c_t^y(y_t(M; K_0))+ \pi_{\mathcal{K}}(\phi(\tilde{u}_t(M;K_0))) \cdot c_t^u(\phi^{-1}(\frac{\phi(\tilde{u}_t(M;K_0))}{\pi_{\mathcal{K}}(\phi(\tilde{u}_t(M;K_0))})).
\end{align}

\begin{observation}[Extension properties]
\label{obs:ext}
$e_t$ defined in \cref{eq:et-def} is convex and satisfies $e_t(M)\ge c_t(y_t(M;K_0),u_t(M;K_0))$, $\forall M\in\MM_{d_x,d_y,H}$.
\end{observation}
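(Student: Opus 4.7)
The plan is to split $e_t(M)$ into its two summands $c_t^y(y_t(M;K_0))$ and the Minkowski-extension piece $\pi_{\mathcal{K}}(\phi(\tilde u_t(M;K_0)))\cdot c_t^u(\phi^{-1}(\phi(\tilde u_t(M;K_0))/\pi_{\mathcal{K}}(\phi(\tilde u_t(M;K_0)))))$, and to verify convexity and the surrogate bound term by term by direct appeal to \cref{lem:extension-property}.

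For convexity, the first step will be to argue that $M \mapsto y_t(M;K_0)$ is affine: the oblivious objects $y_{t-j}(K_0)$ and $w_{t-j}$ do not depend on $M$, and in the surrogate unfolding the $M$-dependence enters only through the raw expressions $\tilde u_{t-j}(M;K_0)$, which are linear in $M^{[0:H]}$. Composition with the convex $c_t^y$ (\cref{asm:po-convex-loss}) then yields convexity of the first summand. For the extension piece, I will observe that $M \mapsto \phi(\tilde u_t(M;K_0))$ is affine (as $\tilde u_t(M;K_0)$ is linear in $M$ and $\phi$ is affine by \cref{eq:phi-func}), and that the piece is precisely the Minkowski extension $e_f$ of the convex function $f := c_t^u \circ \phi^{-1}:\mathcal{K}\to\mathbb{R}_+$ evaluated at this affine image. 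Part 3 of \cref{lem:extension-property} supplies convexity of $e_f$ on $\mathbb{R}^{d_x-1}$, and precomposition with an affine map preserves it.

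For the surrogate bound, the $c_t^y$ summand matches identically on both sides. For the control-cost term, the execution step in Line~\ref{line:control-po} of \cref{alg:gpc-po} sets $u_t(M;K_0) = \phi^{-1}(\phi(\tilde u_t(M;K_0))/\pi_{\mathcal{K}}(\phi(\tilde u_t(M;K_0))))$, so $c_t^u(u_t(M;K_0))$ is exactly the inner factor in the extension piece. Since $\pi_{\mathcal{K}}(x) \geq 1$ (the defining infimum is restricted to $t \geq 1$) and $c_t^u \geq 0$, multiplying this factor by $\pi_{\mathcal{K}}(\phi(\tilde u_t(M;K_0)))$ only inflates the value — equivalently, this is part 2 of \cref{lem:extension-property} applied to $f$. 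Summing the two term-wise inequalities gives $e_t(M) \geq c_t(y_t(M;K_0),u_t(M;K_0))$.

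The main subtlety, and thus the expected main obstacle, is the first step of the convexity argument: one must be careful that the unfolded formula for $y_t(M;K_0)$ used inside the surrogate is affine in $M$, i.e.\ that the inner controls are the raw $\tilde u$'s rather than the nonlinear projected $u$'s that are actually executed. This is exactly what the convex-updating / projected-execution philosophy behind Minkowski extensions (\cref{sec:min}) is designed to enable; everything else reduces to mechanical appeals to \cref{lem:extension-property}.
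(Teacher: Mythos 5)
Your overall route coincides with the paper's: split $e_t$ into the $c_t^y$ summand and the Minkowski-extension summand, get convexity of the latter from part~3 of \cref{lem:extension-property} precomposed with the affine map $M\mapsto\phi(\tilde{u}_t(M;K_0))$, and get the lower bound from $\pi_{\mathcal{K}}\ge 1$ together with $c_t^u\ge 0$ (the paper cites the ``third property'' of \cref{lem:extension-property} for the inequality, but the relevant one is the second, which is exactly the elementary argument you give). That part of your proposal is fine and matches the paper.

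The one place you diverge is the claim that in the unfolding of $y_t(M;K_0)$ the $M$-dependence enters only through the raw $\tilde{u}_{t-j}(M;K_0)$, so that $M\mapsto y_t(M;K_0)$ is affine. That is not what the paper's definition says: in the display immediately preceding \cref{eq:et-def}, $y_t(M;K_0)$ is built from the \emph{projected} counterfactual controls $u_{t-i}(M;K_0)=\phi^{-1}\bigl(\phi(\tilde{u}_{t-i}(M;K_0))/\pi_{\mathcal{K}}(\phi(\tilde{u}_{t-i}(M;K_0)))\bigr)$, and the Minkowski projection is not affine in its argument. Under the paper's definition, $M\mapsto y_t(M;K_0)$ is therefore not affine, and convexity of the first summand $c_t^y(y_t(M;K_0))$ does not follow from ``convex composed with affine.'' You have correctly isolated this as the crux --- it is precisely the point that the paper's own one-line convexity argument leaves implicit --- but your resolution (asserting that the inner controls in the unfolding are the raw $\tilde{u}$'s) contradicts the stated definition rather than reconciling with it. To close the gap one would need either to redefine $y_t(M;K_0)$ with unprojected inner controls, or to give a separate argument for the convexity of the $y$-summand; as written, neither your proposal nor the paper does this explicitly.
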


%In \cref{sec:approx-po}, we show that every policy in the comparator class (Definition~\ref{def:po-comparator}) can be approximated by $\pi^M$ for some $M \in \MM_{d_x,d_y,H}$. We bound the memory mismatch error in \cref{sec:mem-mismatch-po}, and conclude the proof in \cref{sec:po-proof}.% and revisit the online gradient descent regret guarantee in \cref{sec:gd-prelim}.

\subsection{Proof Overview}
Similar to $\GPCS$, $\GPCPOS$ runs gradient steps on the counterfactual loss, and thus the regret decomposition follows similarly to that of $\GPCS$ (Algorithm 1 in \cite{golowich2024online}). The analysis breaks down into three main ingredients: 
\begin{enumerate}
    \item We show that the counterfactual observations $y_t(K_0)$ under some ``default'' linear policy $K_0$, can be exactly computed to be the signals $o_t$ in Line~\ref{line:compute-signal} of \cref{alg:gpc-po}. In particular, through an unrolling technique, we will show that $y_t(K_0)$ can be constructed as adding a calibration term based on the difference between controls $u_{t-i} - K_0 y_{t-i}(K_0)$, to the raw observation $y_t$:
    \begin{align*} y_t(u_{1:t-1}) - \sum_{i=1}^{t-1} \bar\lambda_{t,i}C((1-\alpha)A)^{i-1}\alpha B(u_{t-i} - K_0 y_{t-i}(K_0)) = y_t(K_0).\end{align*}
   
    \item Next, we show the expressivity of the learning policy class: every policy $K^*$ in the comparator class (Definition~\ref{def:po-comparator}) can be approximated by $\pi^M$ for some $M \in \MM_{d_x,d_y,H}$. This is done by constructing $M$ such that $u_t(M;K_0)$ and $u_t(K^*)$ are very close (implying $y_t(K^*)$ also close to $y_t(M;K_0)$, then costs are close by Lipschitzness), which breaks into a few steps. First, we explicitly construct $M$ such that $u_t(K^*)$ is exactly equal to the control with full memory $\hat{u}_t(M;K_0)$, defined as below 
\begin{align*}
\hat{u}_{t}(M;K_0):=M^{[0]}y_t(K_0)+\alpha\sum_{i=1}^{t-1}\bar{\lambda}_{t,i}M^{[i]}y_{t-i}(K_0).
\end{align*}
    Then we show a low truncation error between $\hat{u}_t(M;K_0)$ and $\tilde{u}_t(M;K_0)$ (Line~\ref{line:control-po} in Algorithm~\ref{alg:gpc-po}) which has only $H$ history dependence: roughly speaking, we show the difference term $\left\|\sum_{i=H+1}^{t-1}\bar{\lambda}_{t,i}M^{[i]}y_{t-i}(K_0)\right\|_1$ can be written as $\sum_{i=H+1}^{t-1} O\left(\left\|A_{\text{mix}}^i(v_1-v_2)\right\|_1\right)$ for some mixing matrix $A_{\text{mix}}$ and probability vectors $v_1,v_2$, here $A_{\text{mix}}^iv_1, A_{\text{mix}}^iv_2$ are very close due to the mixing property. Finally we show $\tilde{u}_t(M;K_0)$ is close to its projection.

    \item Then, we bound the memory mismatch error: the difference between the true loss and the would-be loss when all controls in history are replaced by $M_t$. Here a new treatment on the discussion about mixing time is required, due to the new parameterization.
\end{enumerate}
Finally, after proving regularity properties of $e_t$ required by the previous steps, we can put everything together and conclude the proof. Below is the regret decomposition, where the extension mismatch error is non-positive by definition, and the comparator mismatch error can be easily bounded by constant.
\begin{align*}
&\sum_{t=1}^T c_t(y_t,u_t)-\min_{K^{\star}\in \Ksim_\tau(\ML)}\sum_{t=1}^T c_t(y_t(K^{\star}),u_t(K^{\star}))\\
&\le \underbrace{\sum_{t=1}^T c_t(y_t,u_t)-\sum_{t=1}^T c_t(y_t(M_t;K_0),u_t(M_t;K_0))}_{\text{(memory mismatch error)}} + \underbrace{\sum_{t=1}^T c_t(y_t(M_t;K_0),u_t(M_t;K_0))- \sum_{t=1}^Te_t(M_t)}_{\text{(extension mismatch error})} \\
&\quad + \underbrace{\sum_{t=1}^Te_t(M_t)-\min_{M\in\mathcal{M}}\sum_{t=1}^T e_t(M)}_{\text{(regret on extension)}} + \underbrace{\min_{M\in\mathcal{M}}\sum_{t=1}^T e_t(M)- \min_{K^{\star}\in \Ksim_\tau(\ML)}\sum_{t=1}^T c_t(y_t(K^{\star}),u_t(K^{\star}))}_{\text{(comparator mismatch error)}}
\end{align*}

\section{Extending to Non-Markov Policy}\label{sec:non-markov}

The results in the previous section hold for Markov linear policies, where the control 
$u_t$ depends only on the most recent observation $y_t$. However, in partially observable control problems, it is well-known that Markov policies may not be expressive enough to capture the optimal policy, even in the i.i.d. noise setting \citep{bacsar2008h}. Thus, it is crucial to study whether optimal regret bounds can be achieved for a broader class of non-Markov policies.

\subsection{Motivation: The Need for a Richer Policy Class}
Prior work by \cite{simchowitz2020improper} studied control with partial observations under $\ell_2$ constraints. They noted that a richer policy class is required to capture the $H_{\infty}$ control policy in the i.i.d. noise setting. To address this, they introduced the class of stabilizing linear dynamic controller (LDC):
\begin{align*}
s_{t+1}=A_{\pi}s_t+B_{\pi}y_t, \quad u_t=C_{\pi}s_t+D_{\pi}y_t. 
\end{align*}

Here, the control is computed as a linear combination of the current observation $y_t$ and a hidden state $s_t$, which evolves according to a linear system driven by past observations. This structure enables the policy to incorporate the full history of observations rather than relying solely on 
$y_t$.

The LDC structure originates from the optimal solution for Linear Quadratic Gaussian (LQG) control with partial observations \citep{bacsar2008h}. In LQG, the key idea is to estimate an approximate state $\tilde{x}_t$ using a Kalman filter, then compute the optimal control $u_t$, as if $\tilde{x}_t$ were the true state $x_t$. The estimation evolves as 
$$\tilde{x}_{t+1}=A \tilde{x}_t+Bu_t+D(y_{t+1}-C(A \tilde{x}_t+Bu_t)),$$
where $D(y_{t+1}-C(A \tilde{x}_t+Bu_t))$ is the correction term to the original system based on the current observation $y_{t+1}$. Since $u_t$ is linear in $\tilde{x}_t$, $\tilde{x}_{t+1}$ is simply linear in $\tilde{x}_t$ and $y_{t+1}$, LDCs are expressive enough to capture classical LQG control with i.i.d. Gaussian noise.

\subsection{Extending to Population Dynamics}
For population dynamics, the optimal linear control policy under full observability was derived by \cite{golowich2024online}. To extend this to partial observability, we need an analogous comparator class to LDCs. Similarly we need to simulate $x_t$ by $s_t$ in the stochastic setting, the main difference lies in the original evolution of $x_t$:
$$
x_{t+1}=(1-\gamma_t)(Ax_t+Bu_t)+\gamma_t w_t.
$$
In contrast to classical control, where noise is Gaussian with zero mean, population dynamics involve disturbances $w_t$ over the simplex $\Delta^{d_x}$ with a nonzero mean $v$. A natural choice is to set
$$
s_{t+1}=(1-\gamma_t)(As_t+Bu_t)+\gamma_t v.
$$
Since $\gamma_t$ and $w_t$ are independent, $E[x_t]=E[s_t]$ holds by the linearity of expectation when $s_1=x_1$. Thus, this system is expressive enough to capture optimal population control with stochastic noise. For simplicity, we assume $v=\frac{1}{d} \mathbf{1}_d$ which is the most natural prior, and our analysis can be generalized to for any $v\in \Delta_d$.

\begin{definition}[simplex-LDC]
\label{def:po-comparator-ldc}
Given $d_x, d_y\in\mathbb{N}$, a simplex linear dynamical controller (simplex-LDC) is a LDC $\pi=(A_{\pi}, B_{\pi}, C_{\pi}, D_{\pi})$ of appropriate dimensions with the additional assumptions that
\begin{enumerate}
\item $s_1=\frac{1}{d_x}\mathbf{1}_{d_x}\in\Delta^{d_x}$.
\item $\forall s\in\Delta^{d_x}, y\in\Delta^{d_y}$, there holds that the next inner state and control evolved according to the simplex LDC are in the simplex, i.e.
\begin{align*}
s_{t+1}=(1-\gamma_t)((1-\alpha)A_{\pi}s_t+\alpha B_{\pi}y_t)+\gamma_t\cdot\frac{1}{d_x}\mathbf{1}_{d_x}\in\Delta^{d_x}, \quad u_t=C_{\pi}s_t+D_{\pi}y_t\in\Delta^{d_x}. 
\end{align*} 
\end{enumerate}
\end{definition}

Unlike standard control settings where states decay to zero without noise, population dynamics maintain a unit $\ell_1$-norm due to the simplex constraint. Therefore, we replace strong stabilizability with a mixing time assumption.

\begin{definition}[$\tau$-mixing simplex-LDC]
\label{def:mixing-simplex-ldc}
Given $\tau>0$ and a partially observable simplex LDS 
%$$\mathcal{L}=(A, B, C, x_1, (\gamma_t)_{t\in\mathbb{N}}, (w_t)_{t\in\BN},(c_t)_{t\in\BN}, \alpha)$$ 
with state and observation dimensions $d_x, d_y\in\mathbb{N}$, we denote as $\Pi_{\tau}(\ML)$ the class of $\tau$-mixing simplex-LDCs as any simplex-LDC $\pi=(A_{\pi}\in\mathbb{R}^{d_x\times d_x}, B_{\pi}\in\mathbb{R}^{d_x\times d_y}, C_{\pi}\in\mathbb{R}^{d_x\times d_x}, D_{\pi}\in\mathbb{R}^{d_x\times d_y}, s_1\in\mathbb{R}^{d_x})$ satisfying that for
\begin{align}
\label{eq:ldc-transition-matrix}
A_{\pi,\mathrm{cl}}=\begin{bmatrix}
(1-\alpha)A+\alpha BD_{\pi}C & \alpha BC_{\pi} \\
 B_{\pi}C & A_{\pi},
\end{bmatrix}
\end{align}
where $A_{\pi,\mathrm{cl}}:\Delta^{d_x}\times\Delta^{d_x}\rightarrow \Delta^{d_x}\times\Delta^{d_x}$, there exists unique $x\in\Delta^{d_x}$, $s\in\Delta^{d_x}$ such that 
\begin{align*}
t^{\mathrm{mix}}(A_{\pi,\mathrm{cl}})=\min_{t\in\mathbb{N}}\left\{t: \quad \forall t'\ge t, \sup_{p\in\Delta^{d_x},q\in\Delta^{d_x}}\|A_{\pi,\mathrm{cl}}^t(p,q)-(x,s)\|_1\le \frac{1}{4}\right\}\le \tau,
\end{align*}
where $(p,q), (x,s)\in\mathbb{R}^{d_x+d_x}$ denote the concatenated vectors of $p, q$ and $x,s$, respectively.
\end{definition}

In order to approximate the set of $\tau$-mixing simplex-LDC as in Definition~\ref{def:mixing-simplex-ldc}, we consider a slightly modified convex learning class than the one in Definition~\ref{def:learning-class} used in the Markov setting. 

\begin{definition} [Learning policy class for $\tau$-mixing simplex-LDC]
\label{def:learning-class-ldc}
Let $d_x,d_y,H \in \BN$ and let $\BS_{0}^{d_x,d_y}$ be set of matrices $K\in[-1,1]^{d_x\times d_y}$ such that $\sum_{i=1}^{d_x} K_{ij} = 0$ for all $j \in [d_y]$. Then let $\MM_{d_x,d_y,H}^{+}$ be the set of tuples ${M^{[0:H]}} = (M^{[0]},\dots,M^{[H]})$ defined as follows:
\begin{align}
\label{eq:M-class-nonmarkov}
\MM_{d_x,d_y,H}^{+} := \{M^{[0:H]}\mid M^{[0]}\in\BS^{d_x,d_y}\otimes \BS^{d_x,d_x} \text{ and } M^{[i]}\in\BS_{0}^{d_x,d_y}\otimes \BS_{0}^{d_x,d_x} \text{ for all }  i \in [H]\}.
\end{align}
\end{definition}
First, we observe that $\MM_{d_x,d_y,H}^{+}$ defined in Definition~\ref{def:learning-class-ldc} is convex. We will show that $\MM_{d_x,d_y,H}^{+}$ approximates the class of $\tau$--mixing simplex-LDC well. Therefore, a regret bound with respect to $\MM_{d_x,d_y,H}^{+}$ translates to the regret bound with respect to the class of $\tau$--mixing simplex-LDC. 

\subsection{Our Algorithm and Result}

With the learning policy class defined, our new controller algorithm (deferred to appendix) is a modified version of Algorithm~\ref{alg:gpc-po} to accommodate the new class of learning policies. Essentially, we raise the dimension of the signals to incorporate the uniform prior.  

For any $\pi \in \Pi_{\tau}(\ML)$ and partially observable simplex LDS $\ML$, we let $(y_t(\pi))_t$ and $(u_t(\pi))_t$ denote the counterfactual sequences of observations and controls that would have been obtained by playing the policy $\pi$ from the beginning of the time. We can now state our main result for controlling partially observable simplex LDS, with respect to the class of $\tau$-mixing simplex-LDC.

\begin{theorem} [PO regret ($\tau$-mixing simplex-LDC)]
\label{thm:po-main-ldc-simplex} 
Let $d_x,d_y, T\in\BN$ with $d_y\leq d_x$. %Let $\ML=(A,B, C, x_1,(\gamma_t)_{t\in\BN},(w_t)_{t\in\BN},(c_t)_{t\in\BN}, \alpha)$ be a partially observable simplex LDS with cost functions $(c_t)_t$ satisfying \cref{asm:po-convex-loss} for some $L > 0$. 
Set $H=\lceil 2\tau\lceil \log\frac{4096\tau d_x^{7/2} L T^2}{\epsilon}\rceil \rceil$, $\eta=1/(LH^{2}d_x^{6}\sqrt{T})$. Then for any $\tau > 0$, there exists an algorithm with the following policy regret guarantee with respect to $\Pi_{\tau}(\ML)$: \begin{align*}
\sum_{t=1}^T c_t(y_t,u_t)-\min_{\pi\in\Pi_{\tau}(\ML)}\sum_{t=1}^T c_t(y_t(\pi),u_t(\pi))\le \tilde{O}\left(L\tau^{4}d_x^{8}\sqrt{T}\right),
\end{align*}
where $\tilde{O}(\cdot)$ hides all universal constants and poly-logarithmic dependence in $T$. 
\end{theorem}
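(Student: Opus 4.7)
The plan is to carry the four-part regret decomposition used for Theorem~\ref{thm:po-main} -- memory mismatch, extension mismatch, OCO regret on the surrogate, and comparator mismatch -- over to the lifted learning class $\MM^{+}_{d_x,d_y,H}$ from Definition~\ref{def:learning-class-ldc}. The modified controller promised in Section~\ref{sec:non-markov} mirrors Algorithm~\ref{alg:gpc-po}, except that the signal is lifted to pair the counterfactual observation $y_t(K_0)$ with an auxiliary $d_x$-dimensional component driven by the uniform prior $\frac{1}{d_x}\mathbf{1}_{d_x}$, so that the tensor-product parameterization $\BS^{d_x,d_y}\otimes\BS^{d_x,d_x}$ acts naturally on these lifted signals. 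The Minkowski extension and projection machinery of Section~\ref{sec:min} applies verbatim to the lifted raw control $\tilde u_t(M;K_0)$: the surrogate $e_t$ is convex on $\MM^{+}_{d_x,d_y,H}$, and the extension mismatch is non-positive as in Observation~\ref{obs:ext}.

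The first concrete step is signal construction. I would reuse the unrolling identity in Line~\ref{line:compute-signal} of Algorithm~\ref{alg:gpc-po} to recover $y_t(K_0)$ exactly from observed quantities via a calibration sum in $u_{t-i}-K_0 o_{t-i}$; the uniform-prior component of the lifted signal is deterministic, so no new computation is required. This produces computable signals and a well-defined convex surrogate $e_t(M)$ that dominates $c_t(y_t(M;K_0),u_t(M;K_0))$.

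The hard part is the expressivity (comparator-mismatch) argument: for each $\pi=(A_\pi,B_\pi,C_\pi,D_\pi,s_1)\in\Pi_\tau(\ML)$, I must produce $M\in\MM^{+}_{d_x,d_y,H}$ such that the full-memory control $\hat u_t(M;K_0)$ matches $u_t(\pi)=C_\pi s_t+D_\pi y_t(\pi)$ up to $O(\epsilon/T)$ error. The key step is to unroll the inner-state recursion $s_{t+1}=(1-\gamma_t)((1-\alpha)A_\pi s_t+\alpha B_\pi y_t)+\gamma_t\cdot\frac{1}{d_x}\mathbf{1}_{d_x}$ and express $s_t$ as a linear combination of $\{y_{t-i}(\pi)\}_{i\ge 1}$ and the uniform prior, whose coefficients are products of $(1-\alpha)A_\pi$ and $\alpha B_\pi$ weighted by $\bar\lambda_{t,i}$ and $\lambda_{t,i}$ from (\ref{eq:define-lambdas}). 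The simplex constraints in Definition~\ref{def:po-comparator-ldc} ensure these coefficients are stochastic matrices, placing the full-memory $M^{[i]}$ inside $\BS^{d_x,d_y}\otimes\BS^{d_x,d_x}$; subtracting column means then lands the increments in $\BS_0^{d_x,d_y}\otimes\BS_0^{d_x,d_x}$, satisfying Definition~\ref{def:learning-class-ldc}. The truncation error from cutting the history at $H$ does not reduce to a power $A_\pi^H$ of a potentially non-mixing $A_\pi$; instead, by the telescoping argument indicated in the proof overview of Theorem~\ref{thm:po-main}, it is controlled by $\|A_{\pi,\mathrm{cl}}^H(p)-A_{\pi,\mathrm{cl}}^H(q)\|_1$ for simplex vectors $p,q$, with $A_{\pi,\mathrm{cl}}$ the closed-loop matrix of (\ref{eq:ldc-transition-matrix}). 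By Definition~\ref{def:mixing-simplex-ldc}, this decays like $2^{-\lfloor H/\tau\rfloor}$, and the choice $H=\lceil 2\tau\lceil\log(4096\tau d_x^{7/2}LT^2/\epsilon)\rceil\rceil$ renders the total truncation error $O(1)$ over the horizon.

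The remaining steps are bookkeeping that parallels the Markov case. The memory-mismatch term is handled via the same combination of OGD stability and system mixing through $(1-\alpha)A$ used for Theorem~\ref{thm:po-main}, with extra polynomial factors in $d_x$ absorbed by the lifted signal dimension. For the regret on the extension, I would run projected OGD on $\MM^{+}_{d_x,d_y,H}$; the Lipschitz constant of $e_t$ follows by combining Lemma~\ref{cor:lip-control} with the chain rule through $\tilde u_t(M;K_0)$ and $y_t(M;K_0)$, yielding $\poly(d_x,H,L)$, and the $\ell_2$-diameter of $\MM^{+}_{d_x,d_y,H}$ is also $\poly(d_x,H)$. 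Plugging these into the standard OGD guarantee and choosing $\eta=1/(LH^{2}d_x^{6}\sqrt T)$ balances the four terms to yield $\tilde O(L\tau^{4}d_x^{8}\sqrt T)$; the jump from $d_x^{6.5}$ in the Markov case to $d_x^{8}$ reflects the lifted signal dimension and the larger tensor-product parameter count in $\MM^{+}_{d_x,d_y,H}$.
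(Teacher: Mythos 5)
Your high-level plan (the same four-term decomposition, lifted signals with a deterministic uniform-prior block, Minkowski surrogate, OGD) matches the paper, but the central expressivity step is not right as described. You propose to unroll the comparator's internal state $s_t$ as a linear combination of $\{y_{t-i}(\pi)\}_{i\ge 1}$ with coefficients built from $((1-\alpha)A_\pi)^{i-1}\alpha B_\pi$. The learning class $\MM^{+}_{d_x,d_y,H}$, however, acts on the \emph{computable} signals $o_{t-i}=y_{t-i}(K_0)$ (observations under the fixed default policy), not on $y_{t-i}(\pi)$; an expansion in the comparator's own observations is not a policy the algorithm can represent. The actual construction must bridge the two observation sequences: the paper takes $M^{[0]}=[D_\pi\ C_\pi]$ and $M^{[i]}=C_{\pi,\mathrm{cl},u}A_{\pi,\mathrm{cl}}^{i-1}P_{\pi,\pi_0}$ with $P_{\pi,\pi_0}=\bigl[\begin{smallmatrix}\alpha B(D_\pi-K_0) & \alpha BC_\pi\\ B_\pi & A_\pi-I\end{smallmatrix}\bigr]$, and proves exactness $\hat u_t(M;\pi_0)=u_t(\pi)$ via the identity $X^n=Y^n+\sum_{i=1}^n X^{i-1}(X-Y)Y^{n-i}$ with $X=A_{\pi,\mathrm{cl}}$ and $Y=\mathrm{diag}(\mathbb{A}_{K_0}^\alpha,I)$. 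Note that the zero-column-sum structure of $M^{[i]}$ ($i\ge1$) comes for free because $P_{\pi,\pi_0}$ is a difference of (sub)stochastic blocks; your suggestion to "subtract column means" from stochastic coefficients would change the value of $\hat u_t$ and break the exact matching unless you separately argue the subtracted part annihilates the signals. Your truncation bound via mixing of $A_{\pi,\mathrm{cl}}$ is the right idea, but it only makes sense once the coefficients are of the form $C_{\pi,\mathrm{cl},u}A_{\pi,\mathrm{cl}}^{i-1}(\cdot)$, which your construction does not produce.

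There is a second gap in the memory-mismatch step, which you assert carries over "the same" as in Theorem~\ref{thm:po-main}. In the case $\tmix(A)>4\tau$ one needs a lower bound on $\alpha$ derived from the comparator's mixing assumption, and the Markov-case perturbation argument (comparing $A$ with $\mathbb{A}_{K^\star}^\alpha$ in $\|\cdot\|_{1\to1}$) does not apply because $A$ and $A_{\pi,\mathrm{cl}}$ live in different dimensions. The paper introduces the auxiliary matrix $A_{\mathrm{compare}}=\bigl[\begin{smallmatrix}A & 0\\ B_\pi C & A_\pi\end{smallmatrix}\bigr]$ as a bridge, bounds $\|A_{\mathrm{compare}}^t v-A_{\pi,\mathrm{cl}}^t v\|_1\le 2\alpha d_x t$ (the extra $d_x$ arising because the relevant vectors live in $\Delta^{d_x}\times\Delta^{d_x}-\Delta^{d_x}\times\Delta^{d_x}$ rather than a simplex), and concludes $\alpha\ge 1/(256 d_x\tau)$. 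This weaker bound on $\alpha$ is one of the genuine sources of the degradation from $d_x^{6.5}$ to $d_x^{8}$; attributing the extra factors solely to the lifted parameter count, as you do, misses where the new analysis is actually required.
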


\subsection{New Technical Ingredients} Similarly to the previous setting, Theorem~\ref{thm:po-main-ldc-simplex} depends on several building blocks. 
First, we will show that the convex learning policy class (Definition~\ref{def:learning-class-ldc}) used in our algorithm approximates $\Pi_{\tau}(\ML)$ well. To incorporate the structure of population dynamics, we design a new LDC analogue
\begin{align*}
\begin{bmatrix}
x_{t+1}(\pi) \\
s_{t+1}(\pi)
\end{bmatrix}
&=(1-\gamma_t)A_{\pi,\mathrm{cl}}\begin{bmatrix}
x_t(\pi)\\
s_t(\pi)
\end{bmatrix}+\gamma_t \begin{bmatrix}
w_t\\
\frac{1}{d_x}\mathbf{1}_{d_x}
\end{bmatrix}, \\
\begin{bmatrix}
y_{t}(\pi) \\
u_{t}(\pi)
\end{bmatrix}
&=\begin{bmatrix}
C & 0\\
D_{\pi}C & C_{\pi}
\end{bmatrix}\begin{bmatrix}
x_t(\pi)\\
s_t(\pi)
\end{bmatrix},
\end{align*}
coupled with a change in the control $\tilde{u}_t$ by appending the prior below $o_t$. The additional $\frac{1}{d_x}\mathbf{1}_{d_x}$ term is crucial to proving the desired equivalence, such that after careful decomposition and rewriting of the control, we can use the important matrix identity $X^n=Y^n+\sum_{i=1}^n X^{i-1}(X-Y)Y^{n-i}$ as a bridge.

Next, we will show that the memory mismatch error is small. In this step, because our mixing assumption differs from that in the previous section, a fine-grained analysis is conducted on lower bounding $\alpha$ when the mixing time of $A$ is large. A key step is to bound the mixing time of $A$ by the mixing time of $A_{\pi,\mathrm{cl}}=\begin{bmatrix}
(1-\alpha)A+\alpha BD_{\pi}C & \alpha BC_{\pi} \\
 B_{\pi}C & A_{\pi}
\end{bmatrix}.$ Because they have different dimensions, we need an auxiliary matrix $A_{compare}=\begin{bmatrix}
A & 0 \\
 B_{\pi}C & A_{\pi}
\end{bmatrix}$ as a bridge. This requires a new analysis along with a slightly worse dependence on $d_x$, which comes from the $\ell_1$ norm estimation of a key term, which is no longer in $\Delta^{d_x}$ but lives in $\Delta^{d_x}-\Delta^{d_x}$.
\section{Conclusion}
In this work, we investigate the control of population dynamics in a partially observable setting. Despite its practical significance, this problem presents several nontrivial technical challenges due to the interplay between simplex constraints and partial observability.

We develop controller algorithms with $\tilde{O}(\sqrt{T})$ regret that perform effectively against both linear Markov controllers and a broader class of non-Markov linear dynamic controllers. Our results build upon the frameworks of \cite{golowich2024online, simchowitz2020improper}, incorporating key algorithmic and analytical innovations to address the unique challenges of the problem.

\newpage

\bibliographystyle{plainnat}
\bibliography{main}

\newpage
\appendix
\tableofcontents

\newpage

\section{Omitted Proofs from Section \ref{sec:min}}

\subsection{Proof of Lemma~\ref{lem:extension-property}}
\begin{proof} The first two properties follow immediately from construction. 
    We only need to prove the third property, and we do this by showing that the directional derivative satisfies $\partial_x e_f(x)\ge 0, \forall x\notin \mathcal{K}$.
    
    For any two points $x_1,x_2$, denote $y_1=\frac{x_1}{\pi_{\mathcal{K}}(x_1)}, y_2=\frac{x_2}{\pi_{\mathcal{K}}(x_2)}$, we need to prove
    \begin{align}
    \label{eq:minkowski-convex-inequality}
    \pi_{\mathcal{K}}(x_1)f(y_1)+\pi_{\mathcal{K}}(x_2)f(y_2)\ge 2\pi_{\mathcal{K}}\left(\frac{x_1+x_2}{2}\right)f\left(\frac{x_1+x_2}{2\pi_{\mathcal{K}}\left(\frac{x_1+x_2}{2}\right)}\right).
    \end{align}
    We notice that
    $$
    \frac{x_1+x_2}{2\pi_{\mathcal{K}}(\frac{x_1+x_2}{2})}=ay_1+by_2,
    $$
    where 
    \begin{align*}
        a=\frac{\pi_{\mathcal{K}}(x_1)}{2\pi_{\mathcal{K}}\left(\frac{x_1+x_2}{2}\right)}, \quad b=\frac{\pi_{\mathcal{K}}(x_2)}{2\pi_{\mathcal{K}}(\frac{x_1+x_2}{2})}.
    \end{align*}
    By the convexity of $f$, the right hand side of \cref{eq:minkowski-convex-inequality} can be bounded by
    \begin{align*}
    2\pi_{\mathcal{K}}\left(\frac{x_1+x_2}{2}\right)f(ay_1+by_2)&\le 2\pi_{\mathcal{K}}\left(\frac{x_1+x_2}{2}\right)\frac{af(y_1)+bf(y_2)}{a+b}\\
    &=\frac{4\pi_{\mathcal{K}}^2\left(\frac{x_1+x_2}{2}\right)}{\pi_{\mathcal{K}}(x_1)+\pi_{\mathcal{K}}(x_2)}[af(y_1)+bf(y_2)],
    \end{align*}
    while the left hand side of \cref{eq:minkowski-convex-inequality} is just
    $$
    2\pi_{\mathcal{K}}\left(\frac{x_1+x_2}{2}\right)[af(y_1)+bf(y_2)].
    $$
    It suffice to prove 
    $$
    \frac{4\pi_{\mathcal{K}}^2\left(\frac{x_1+x_2}{2}\right)}{\pi_{\mathcal{K}}(x_1)+\pi_{\mathcal{K}}(x_2)}\le 2\pi_{\mathcal{K}}\left(\frac{x_1+x_2}{2}\right),
    $$
    which is equivalent to 
    $$
    \pi_{\mathcal{K}}(x_1)+\pi_{\mathcal{K}}(x_2)\ge 2\pi_{\mathcal{K}}\left(\frac{x_1+x_2}{2}\right),
    $$
    by the convexity of $\pi_{\mathcal{K}}$. Because both $\pi_{\mathcal{K}}, f$ are continuous and $f$ is non-negative, mid-point convexity implies convexity.
\end{proof}

\subsection{Proof of Lemma~\ref{lem:extension-lip}}

\begin{proof}
To analyze the curvature of Minkowski projections and extensions, we introduce the concept of non-expanding functions.

\begin{definition}[Non-expanding functions]
Given $d_1,d_2\in\mathbb{N}$. Let $\K\subseteq \mathbb{R}^{d_1}$.
Consider a function $f:\K\rightarrow\mathbb{R}^{d_2}$. Let $\|\cdot\|_a, \|\cdot\|_b$ be any two well-defined norms on $\mathbb{R}^{d_1}$, $\mathbb{R}^{d_2}$, respectively. $\forall L>0$, we say that $f$ is $L$-non-expanding w.r.t. the norm tuple $(\|\cdot\|_a, \|\cdot\|_b)$ if $\|f(x_1)-f(x_2)\|_b\le L\|x_1-x_2\|_a$, $\forall x_1,x_2\in\K$. 
\end{definition}

Lemma~\ref{thm:proj-lip} bounds the Lipschitzness of Minkowski projection onto an aspherical convex set. 

\begin{lemma} [Lipschitzness of Minkowski projection]
\label{thm:proj-lip}
Consider a convex set $\mathcal{K}\subset\mathbb{R}^d$ that satisfies $\kappa$-aspherity ($\kappa\ge 1$ by Definition~\ref{def:aspherity}) and its associated Minkowski functional $\pi_{\mathcal{K}}$.  We have that the Minkowski projection function $x\mapsto \frac{x}{\pi(x)}$ is $4\kappa^3$-non-expanding w.r.t. $(\|\cdot\|_2,\|\cdot\|_2)$ on $\mathbb{R}^d$, i.e. $\forall x_1,x_2\in\mathbb{R}^d$,
\begin{align*}
\left\|\frac{x_1}{\pi_{\mathcal{K}}(x_1)}-\frac{x_2}{\pi_{\mathcal{K}}(x_2)}\right\|_2\le 4\kappa^3\cdot\|x_1-x_2\|_2. 
\end{align*}
\end{lemma}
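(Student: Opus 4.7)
\textbf{Proof proposal for Lemma~\ref{thm:proj-lip}.}

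The plan is to write the difference $P(x_1)-P(x_2)$, where $P(x) := x/\pi_{\MK}(x)$, in the standard ``split the numerator'' form and bound each piece using two separate elementary facts about the Minkowski functional $\pi_{\MK}$: a Lipschitz estimate coming from the inner ball $r\mathbb{B}^d \subset \MK$, and a magnitude estimate on $P(x)$ coming from the outer ball $\MK \subset R\mathbb{B}^d$. No deep ingredients are needed; the whole argument is a one-page computation, and the exponent $\kappa^3$ in the target bound is (as far as I can see) somewhat loose, so there is slack to absorb any constants.

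First I would establish two auxiliary facts. (i) The functional $\pi_{\MK}$ is $(1/r)$-Lipschitz with respect to $\|\cdot\|_2$. This is the classical gauge estimate: as a max of $1$ and the standard Minkowski gauge of $\MK$ (which is positively homogeneous and subadditive), $\pi_{\MK}$ is subadditive; since $r\mathbb{B}^d \subset \MK$ gives $\pi_{\MK}(z) \le \|z\|_2/r$ for any $z$, subadditivity yields $|\pi_{\MK}(x_1)-\pi_{\MK}(x_2)| \le \|x_1-x_2\|_2/r$. (ii) For any $x$, $\|P(x)\|_2 \le R$. Here I use that by convexity of $\MK$ and closedness, the infimum in the definition is attained whenever $\pi_{\MK}(x) \ge 1$, so $P(x) = x/\pi_{\MK}(x) \in \MK \subset R\mathbb{B}^d$.

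With those facts in hand, write $\pi_i := \pi_{\MK}(x_i)$ and use the identity
\begin{align*}
\frac{x_1}{\pi_1} - \frac{x_2}{\pi_2} \;=\; \frac{x_1-x_2}{\pi_2} \;+\; \frac{x_1}{\pi_1}\cdot\frac{\pi_2-\pi_1}{\pi_2}.
\end{align*}
The first term is at most $\|x_1-x_2\|_2$, since $\pi_2 \ge 1$. The second term has norm at most $(\|x_1\|_2/\pi_1)\cdot(|\pi_1-\pi_2|/\pi_2)$; by fact (ii) this is $\le R\cdot|\pi_1-\pi_2|$, and by fact (i) this is $\le (R/r)\|x_1-x_2\|_2 = \kappa\|x_1-x_2\|_2$. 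Adding the two contributions gives the clean bound $(\kappa+1)\|x_1-x_2\|_2 \le 2\kappa\|x_1-x_2\|_2$, which certainly implies the $4\kappa^3$ bound claimed in the lemma.

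The only real subtlety, and the ``main obstacle'' in so far as there is one, is making sure fact (i) is actually valid for the variant of the Minkowski functional used in the paper (which enforces $\pi_{\MK} \ge 1$ rather than allowing arbitrarily small values for $x$ near $0$). That modification just takes a pointwise maximum with the constant $1$, and taking a max with a $0$-Lipschitz function preserves the $(1/r)$-Lipschitz constant; the attainment argument used in fact (ii) goes through because the standard Minkowski functional already satisfies $x/\pi^{\mathrm{std}}_{\MK}(x) \in \MK$ whenever $\pi^{\mathrm{std}}_{\MK}(x) > 0$, and on the set $\{\pi_{\MK} = 1\}$ we trivially have $x \in \MK$. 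Once these two points are verified, the proof is essentially the three-line display above.
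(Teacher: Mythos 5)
Your proof is correct, and it takes a genuinely different and substantially more elementary route than the one in the paper. The paper first reduces to nearby pairs $x_1,x_2$ via a local-to-global Lipschitz lemma, then runs a three-case geometric analysis (law of cosines, angle bounds, intersection points of supporting lines) that ultimately yields constants $2\kappa^3$ and $4\kappa^2$ in the various cases. You instead use the exact algebraic splitting
\begin{align*}
\frac{x_1}{\pi_1}-\frac{x_2}{\pi_2}=\frac{x_1-x_2}{\pi_2}+\frac{x_1}{\pi_1}\cdot\frac{\pi_2-\pi_1}{\pi_2},
\end{align*}
bound the first term by $\|x_1-x_2\|_2$ using $\pi_2\ge 1$, and the second by $R\,|\pi_1-\pi_2|\le (R/r)\|x_1-x_2\|_2$ using the outer inclusion $\MK\subset R\mathbb{B}^d$ together with the $(1/r)$-Lipschitzness of the gauge (which follows from subadditivity and the inner inclusion $r\mathbb{B}^d\subset\MK$, and which the paper itself invokes elsewhere). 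Both of your auxiliary facts check out for the truncated gauge $\pi_{\MK}=\max\{1,\pi_{\MK}^{\mathrm{std}}\}$: taking a max with a constant preserves the Lipschitz constant, and the bound $\|x/\pi_{\MK}(x)\|_2\le R$ does not actually require attainment of the infimum (hence no closedness assumption), since $x\in(\pi_{\MK}(x)+\ep)\MK\subset(\pi_{\MK}(x)+\ep)R\mathbb{B}^d$ for every $\ep>0$. Your approach buys a strictly better constant, $1+\kappa\le 2\kappa$ in place of $4\kappa^3$, which would in fact improve the $d_x$-dependence wherever the lemma is applied downstream (e.g.\ the $64d_x^{7/2}$ factor in the approximation lemma); the paper's geometric argument buys nothing extra here and is considerably longer.
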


Using this, we obtain a Lipschitz bound for the Minkowski extension. Fix $u,v\in D\mathbb{B}^d$. By Lipschitz condition on $f$ and Lemma~\ref{thm:proj-lip}, we have that
\begin{align*}
\left|f\left(\frac{u}{\pi_{\mathcal{K}}(u)}\right)-f\left(\frac{v}{\pi_{\mathcal{K}}(v)}\right)\right|\le 4L_f\kappa^3\|u-v\|_2. 
\end{align*}
In addition, by \cite{lu2023projection}, we have that $\pi_{\mathcal{K}}$ is $\frac{1}{r}$-Lipschitz.
As a result,
\begin{align*}
|e(u)-e(v)|&=\left|\pi_{\mathcal{K}}(u)f\left(\frac{u}{\pi_{\mathcal{K}}(u)}\right)-\pi_{\mathcal{K}}(v)f\left(\frac{v}{\pi_{\mathcal{K}}(v)}\right)\right|\\
&\le \left|\pi_{\mathcal{K}}(u)f\left(\frac{u}{\pi_{\mathcal{K}}(u)}\right)-\pi_{\mathcal{K}}(u)f\left(\frac{v}{\pi_{\mathcal{K}}(v)}\right)\right|+\left|\pi_{\mathcal{K}}(u)f\left(\frac{v}{\pi_{\mathcal{K}}(v)}\right)-\pi_{\mathcal{K}}(v)f\left(\frac{v}{\pi_{\mathcal{K}}(v)}\right)\right|\\
&\le 4|\pi_{\mathcal{K}}(u)|\cdot L_f\kappa^3\|u-v\|_2+\left|f\left(\frac{v}{\pi_{\mathcal{K}}(v)}\right)\right|\cdot \frac{1}{r}\|u-v\|_2\\
&\le 4 DL_f\kappa^4\|u-v\|_2+D_{\K}L_f\cdot \frac{1}{r}\|u-v\|_2\\
&\le (4DL_f\kappa^4+\frac{D_{\K}L_f}{r})\|u-v\|_2, 
\end{align*}
where the second inequality uses the Lipschitz condition of $f$ and $\pi_{\mathcal{K}}$; the third inequality uses the fact that $\frac{u}{D\kappa}\in \K$. 
\end{proof}

\subsection{Proof of Lemma~\ref{thm:proj-lip}}

\begin{proof} Let $\kappa=\frac{R}{r}$ such that $r\mathbb{B}^d\subset\K\subset R\mathbb{B}^{d}$. We discuss three cases of $x_1,x_2$.

\paragraph{Case 1, $x_1,x_2\in \mathcal{K}$:} in this case $\pi_{\mathcal{K}}(x_1)=\pi_{\mathcal{K}}(x_2)$, and clearly $\|\frac{x_1}{\pi_{\mathcal{K}}(x_1)}-\frac{x_2}{\pi_{\mathcal{K}}(x_2)}\|_2=\|x_1-x_2\|_2$.

\paragraph{Case 2, $x_1,x_2\notin \mathcal{K}$:} first we prove the following useful lemma.

\begin{lemma}
\label{lem lip}
    A function $f$ is $L$-non-expanding, if and only if there exists some constant $C>0$, for any $x_1,x_2$ with $\|x_1-x_2\|_2\le C$, we have that
    $$
    \|f(x_1)-f(x_2)\|_2\le L\|x_1-x_2\|_2.
    $$
\end{lemma}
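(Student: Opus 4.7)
The plan is to prove both directions of the equivalence separately. The forward implication ($\Rightarrow$) is immediate: if $f$ is globally $L$-non-expanding, then the stated local inequality holds for every pair $x_1, x_2$, hence in particular for any prescribed constant $C > 0$.

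For the reverse implication ($\Leftarrow$), the natural approach is a chaining argument along the line segment from $x_1$ to $x_2$. Given arbitrary $x_1, x_2$ in the domain, I would set $n := \lceil \|x_1 - x_2\|_2 / C \rceil$ and define interpolating points $y_k := x_1 + (k/n)(x_2 - x_1)$ for $k = 0, 1, \dots, n$, so that $y_0 = x_1$, $y_n = x_2$, and each consecutive difference satisfies $\|y_{k+1} - y_k\|_2 = \|x_1 - x_2\|_2 / n \le C$. The hypothesized local bound then applies to every consecutive pair, and the triangle inequality yields
\begin{align*}
\|f(x_1) - f(x_2)\|_2 \;\le\; \sum_{k=0}^{n-1} \|f(y_{k+1}) - f(y_k)\|_2 \;\le\; L \sum_{k=0}^{n-1} \|y_{k+1} - y_k\|_2 \;=\; L\, \|x_1 - x_2\|_2,
\end{align*}
which is exactly the global $L$-non-expansion property.

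The argument is fully elementary, so there is no substantive obstacle. The one point worth noting is that the chaining implicitly relies on convexity of the domain, so that all interpolation points $y_k$ lie where $f$ is defined; in the intended application to Lemma~\ref{thm:proj-lip}, where the Minkowski projection is defined on all of $\mathbb{R}^d$, this is automatic. In Lemma~\ref{thm:proj-lip} itself, this reduction will let me reduce the analysis to pairs $x_1, x_2$ that are sufficiently close, which is the regime where a local computation using the explicit form $x / \pi_{\mathcal{K}}(x)$ (together with the Lipschitzness of $\pi_{\mathcal{K}}$ and the aspherity bounds $r \mathbb{B}^d \subset \mathcal{K} \subset R \mathbb{B}^d$) becomes tractable.
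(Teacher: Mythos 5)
Your proof is correct and follows essentially the same route as the paper's: both handle the forward direction as immediate and prove the reverse direction by chaining along the segment from $x_1$ to $x_2$ with roughly $\lceil \|x_1-x_2\|_2/C\rceil$ equally spaced interpolation points, then summing via the triangle inequality. Your remark that the argument needs the interpolants to lie in the domain (automatic here since the Minkowski projection is defined on all of $\mathbb{R}^d$) is a point the paper leaves implicit.
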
 
\begin{proof}
    The only if direction is straightforward. For the if direction, for any $x_1,x_2$ with distance $\|x_1-x_2\|_2=N>0$, denote $n=\lfloor \frac{N}{C}\rfloor+1$, and denote $y_i=\frac{i}{n}x_1+\frac{n-i}{n}x_2$, for $i\in [n-1]$. Then $\|x_2-y_1\|_2, \|x_1-y_{n-1}\|_2, \|y_j-y_{j+1}\|_2$ are all bounded by $C$ for any $j\in [n-2]$. As a result, $\|f(x_2)-f(y_1)\|_2, \|f(x_1)-f(y_{n-1})\|_2, \|f(y_j)-f(y_{j+1})\|_2$ are all bounded by $\frac{NL}{n}$ for any $j\in [n-2]$. Therefore,
    \begin{align*}
         \|f(x_1)-f(x_2)\|_2&\le \|f(x_2)-f(y_1)\|_2+ \|f(x_1)-f(y_{n-1})\|_2+\sum_{j=1}^{n-2} \|f(y_j)-f(y_{j+1})\|_2 \\
         &\le NL =L\|x_1-x_2\|_2.
    \end{align*}
\end{proof}

Intuitively, Lemma~\ref{lem lip} tells us that we only need to consider $x_1,x_2$ with a small distance for proving the theorem. In particular, since $r\mathbb{B}^d\subset \mathcal{K}$, which means $\|x_1\|_2,\|x_2\|_2\ge r$, we know that the angle $\theta\in [0,\pi]$ between $x_1,x_2$ should also be small. Then by the convexity of $\mathcal{K}$, we can show that a small angle implies a small distance after projection, via a geometric argument.

Denote $a=\|\frac{x_1}{\pi_{\mathcal{K}}(x_1)}\|_2, b= \|\frac{x_2}{\pi_{\mathcal{K}}(x_2)}\|_2$, where $a\ge b$. Without loss of generality, by a coordinate change, we may assume for some $\theta\in[0,\pi]$, 
$$
\frac{x_1}{\pi_{\mathcal{K}}(x_1)}=(a,0,\cdots, 0), \quad \frac{x_2}{\pi_{\mathcal{K}}(x_2)}=(b\cos\theta,b\sin \theta, 0, \cdots, 0).
$$
By the law of cosines, we have that
$$
\left\|\frac{x_1}{\pi_{\mathcal{K}}(x_1)}-\frac{x_2}{\pi_{\mathcal{K}}(x_2)}\right\|_2=\sqrt{a^2+b^2-2ab\cos \theta}.
$$
The above expression is convex in $a$ for a fixed $b$ and vice versa. Therefore, the derivative is monotone. Thus, it's upper bounded by 
\begin{align*}
\max\{\sqrt{b^2+b^2-2b^2\cos \theta},\sqrt{R^2+b^2-2Rb\cos \theta}\}
\end{align*}
for any fixed $b$. Similarly, it's bounded by 
\begin{align*}
\max\{\sqrt{r^2+a^2-2ra\cos \theta},\sqrt{a^2+a^2-2a^2\cos \theta}\}
\end{align*}
for any fixed $a$. We discuss two cases of $\theta$.

\textbf{Case A, $\cos \theta\le \frac{r}{a}$:}
in this case, we have that
$$
x_2^{\top}(\frac{x_1}{\pi_{\mathcal{K}}(x_1)}-\frac{rx_2}{\|x_2\|_2})\le 0,
$$
which is equivalent to $\cos \theta(a-r\cos \theta)-r\sin^2 \theta\le 0$. As a result, for a fixed $a$, the maximum of distance is taken at $b=a$, therefore
$$
\sqrt{r^2+a^2-2ra\cos \theta}\le \sqrt{a^2+a^2-2a^2\cos \theta}\le R\theta,
$$
since $1-\cos \theta=2\sin^2 \frac{\theta}{2}$, and $\sin \theta\le \theta$ for $\theta\ge 0$.

\textbf{Case B, $\cos \theta> \frac{r}{a}$:} 
in this case, the estimate $\sqrt{r^2+a^2-2ra\cos \theta}$ is too rough, and we will make use of the assumption to obtain a lower bound on $b$.
Because $\mathcal{K}$ is convex, the triangle formed by 
$$(a,0,\cdots, 0),(\frac{r^2}{a},\frac{r\sqrt{a^2-r^2}}{a},\cdots, 0),(0,0,\cdots, 0)$$
is a subset of $\mathcal{K}$. Slightly abusing the notations, denote $x,y$ as the first two coordinates for now. We denote $P$ as the intersection point between the two lines
$$
y=\tan \theta \times x, y=-\frac{r}{\sqrt{a^2-r^2}}(x-a).
$$
Then we have that
$$
P=(\frac{ra}{\sqrt{a^2-r^2}\tan \theta+r},\frac{\tan \theta \times ra}{\sqrt{a^2-r^2}\tan \theta+r}),
$$
therefore $b\ge \|P\|_2=\frac{ra}{\sqrt{a^2-r^2}\sin \theta +r\cos \theta}$. As a result, inserting $b$ into the law of cosines, the distance is at most
$$
\max\{\frac{a^2\sin \theta}{\sqrt{a^2-r^2}\sin \theta +r\cos \theta}, \sqrt{2a^2(1-\cos \theta)}\}\le \frac{R^3 \theta}{r^2}.
$$

Now we only need to establish an upper bound on $\theta$. We set $C=\frac{r}{100}$ in Lemma \ref{lem lip}. Since $\|x_1-x_2\|_2\le C$, we have that $\sin \frac{\theta}{2}\le \frac{\|x_1-x_2\|_2}{2r}\le \frac{1}{200}$, and in this regime, 
$$
\theta \le 4 \sin \frac{\theta}{2} \le \frac{2}{r} \|x_1-x_2\|_2.
$$
Combining the two bounds, we have that
$$
\|\frac{x_1}{\pi_{\mathcal{K}}(x_1)}-\frac{x_2}{\pi_{\mathcal{K}}(x_2)}\|_2\le \frac{2}{r}\max\{R, \frac{R^3}{r^2}\} \|x_1-x_2\|_2\le 2\kappa^3 \|x_1-x_2\|_2.
$$

\paragraph{Case 3, $x_1\in \mathcal{K}$, $x_2\notin \mathcal{K}$:} we denote $a=\|x_1\|_2\le R$, $b=\|x_2\|_2\ge r$ and $c=\|\frac{x_2}{\pi(x_2)}\|_2\ge r$. We make an argument based on the angle $\theta\in [0,\pi]$ between $x_1,x_2$. Without loss of generality, by a coordinate change, we may assume 
$$
x_1=\frac{x_1}{\pi_{\mathcal{K}}(x_1)}=(a\cos \theta,a\sin \theta,\cdots, 0),\frac{x_2}{\pi_{\mathcal{K}}(x_2)}=(c, 0, \cdots, 0).
$$
We have that
$$
\|x_1-x_2\|_2^2=a^2+b^2-2ab\cos\theta, \|x_1-\frac{x_2}{\pi_{\mathcal{K}}(x_2)}\|_2^2=a^2+c^2-2ac\cos\theta.
$$
Clearly, when $a\cos\theta\le r$, $x_2^{\top}(x_1-\frac{x_2}{\pi_{\mathcal{K}}(x_2)})\le 0$, and $\|\frac{x_1}{\pi_{\mathcal{K}}(x_1)}-\frac{x_2}{\pi_{\mathcal{K}}(x_2)}\|_2\le \|x_1-x_2\|_2$. Therefore we only need to consider the case when $\cos\theta> \frac{r}{a}$ (and $a\ge r$). In this case, $\|x_1-x_2\|_2\ge \sqrt{a^2\sin^2 \theta}=a\sin \theta$. If $a\sin \theta\ge \frac{r}{2}$, then because $\|x_1-\frac{x_2}{\pi_{\mathcal{K}}(x_2)}\|_2\le 2R$, we have that
$$
\|x_1-\frac{x_2}{\pi_{\mathcal{K}}(x_2)}\|_2\le \frac{4R}{r}\|x_1-x_2\|_2.
$$
If $a\sin \theta< \frac{r}{2}$, because $r\mathbb{B}^d\subset \mathcal{K}$, similar to case 2B, we have that 
$$
c\ge \frac{ar(r\cos\theta+\sqrt{a^2-r^2}\sin\theta)}{r^2-a^2\sin^2\theta},
$$
and
$$
\|x_1-\frac{x_2}{\pi_{\mathcal{K}}(x_2)}\|_2^2\le a^2\sin^2 \theta+a^2\sin^2 \theta[\frac{a^2\sin\theta \cos \theta+r\sqrt{a^2-r^2}}{r^2-a^2\sin^2\theta}]^2.
$$
We notice that
$$
a^2\sin\theta \cos \theta+r\sqrt{a^2-r^2}\le \frac{3R^2}{2}= 2\kappa^2 \frac{3r^2}{4}\le 2\kappa^2(r^2-a^2\sin^2\theta).
$$
As a result, 
$$
\|x_1-\frac{x_2}{\pi_{\mathcal{K}}(x_2)}\|_2\le a\sin \theta (1+2\kappa^2)\le (1+2\kappa^2)\|x_1-x_2\|_2.
$$
Notice that both $4\kappa, 1+2\kappa^2$ are upper bounded by $4\kappa^2$, we conclude our proof.
\end{proof}

\subsection{Proof of Lemma~\ref{cor:lip-control}}

\begin{proof}
We will show in the following lemma that there exists a linear mapping between $\mathbb{R}^{d_x}$ and $\mathcal{K}$ that is bijective when restricted to $\Delta^{d_x}$, and $\K$ satisfies the aspherity condition.

\begin{lemma} [Transformation]
\label{lem:transformation}
The sets $\Delta^{d_x}$, $\mathcal{K}$ satisfy the following properties:
\begin{enumerate}
\item There exists a linear map $\phi:\mathbb{R}^{d_x}\rightarrow\mathcal{K}$ that is bijective when restricted to $\Delta^{d_x}$. 
\item $\phi$ is $1$-non-expanding w.r.t. both $(\|\cdot\|_1,\|\cdot\|_1)$ and $(\|\cdot\|_2, \|\cdot\|_2)$. 
Let $\phi^{-1}:\K\rightarrow\Delta^{d_x}$ be the inverse of $\phi$ restricted to $\Delta^{d_x}$. $\phi^{-1}$ is $2$-non-expanding w.r.t. $(\|\cdot\|_1, \|\cdot\|_1)$ and $d_x$-non-expanding w.r.t. $(\|\cdot\|_2, \|\cdot\|_2)$. 
\item $\K$ is $\kappa$-aspherical with $\kappa=R/r=2d_x$, where $R=1$, and $r=1/(2d_x)$.
\end{enumerate}
\end{lemma}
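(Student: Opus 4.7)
The plan is to verify each of the three claims by direct computation, using the explicit description of $\phi(u) = (u_1,\ldots,u_{d_x-1}) - c\mathbf{1}_{d_x-1}$ with $c := 1/(2(d_x-1))$. First I would extend $\phi$ affinely to all of $\mathbb{R}^{d_x}$ via the same formula. Bijectivity of $\phi|_{\Delta^{d_x}}$ onto $\mathcal{K}$ follows from the fact that any $u \in \Delta^{d_x}$ is uniquely determined by its first $d_x-1$ coordinates, with $u_{d_x} = 1 - \sum_{i<d_x} u_i$. The constraints $u \geq 0$ and $\sum u_i = 1$ translate into $v := (u_1,\ldots,u_{d_x-1})$ satisfying $v \geq 0$ and $\|v\|_1 \leq 1$, which is precisely the defining condition of $\mathcal{K}$ after the translation by $-c\mathbf{1}$. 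The inverse is the affine map $\phi^{-1}(z) = (z_1+c, \ldots, z_{d_x-1}+c,\ 1-\sum_{i<d_x}(z_i+c))$.

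For the Lipschitzness claims, non-expansion of $\phi$ in both $\|\cdot\|_1$ and $\|\cdot\|_2$ is immediate since $\phi(u)-\phi(u')$ is just the projection of $u-u'$ onto the first $d_x-1$ coordinates, so $\|\phi(u)-\phi(u')\|_p \leq \|u-u'\|_p$ for any $p \geq 1$. For $\phi^{-1}$, I would note that $\phi^{-1}(z)-\phi^{-1}(z')$ has its first $d_x-1$ entries equal to $z-z'$ and its last entry equal to $-\sum_{i<d_x}(z_i-z'_i)$. The $\ell_1$ bound follows from the triangle inequality: the extra last entry contributes at most $\|z-z'\|_1$, giving total $\leq 2\|z-z'\|_1$. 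The $\ell_2$ bound comes from Cauchy--Schwarz: $\bigl(\sum_{i<d_x}(z_i-z'_i)\bigr)^2 \leq (d_x-1)\|z-z'\|_2^2$, so $\|\phi^{-1}(z)-\phi^{-1}(z')\|_2^2 \leq d_x \|z-z'\|_2^2$, which yields $\sqrt{d_x}$-non-expansion and in particular $d_x$-non-expansion as claimed.

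For the asphericity claim, I would verify the two inclusions separately. For $\mathcal{K} \subseteq R\mathbb{B}^{d_x-1}$ with $R=1$: given $z = v - c\mathbf{1} \in \mathcal{K}$ with $v \geq 0$, $\|v\|_1 \leq 1$, each $v_i \in [0,1]$ so $v_i^2 \leq v_i$ and $\|v\|_2^2 \leq \sum v_i \leq 1$. Expanding gives $\|z\|_2^2 = \|v\|_2^2 - 2c\sum v_i + (d_x-1)c^2 \leq (1-2c)\sum v_i + (d_x-1)c^2 \leq 1 - 2c + (d_x-1)c^2 = 1 - 3/(4(d_x-1)) \leq 1$. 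For $r\mathbb{B}^{d_x-1} \subseteq \mathcal{K}$ with $r = 1/(2d_x)$: given $\|w\|_2 \leq r$, set $v := w + c\mathbf{1}$; then $v_i \geq c - \|w\|_\infty \geq c - r \geq 0$ since $c = 1/(2(d_x-1)) \geq 1/(2d_x) = r$, and $\|v\|_1 = \sum w_i + (d_x-1)c \leq \sqrt{d_x-1}\|w\|_2 + 1/2 \leq \sqrt{d_x-1}/(2d_x) + 1/2 \leq 1$. This yields $\kappa = R/r = 2d_x$. The only mild subtlety is keeping track of the shift $c$ carefully so that the stated constants $R=1$, $r=1/(2d_x)$ come out cleanly; no single step presents a real obstacle.
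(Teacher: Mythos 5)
Your proof is correct and follows essentially the same route as the paper: the explicit coordinate-dropping/translation map, direct norm computations for the non-expansion constants, and the same two inclusion checks (with the same choice of $r=1/(2d_x)$, $R=1$ and the same Cauchy--Schwarz step) for asphericity. The paper states the bijectivity and Lipschitz claims as immediate and only writes out the asphericity computation, so your version is simply a more detailed rendering of the same argument; in fact your $\ell_2$ bound for $\phi^{-1}$ gives the slightly sharper constant $\sqrt{d_x}$.
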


As a result, (1) follows from that $\pi$ is $(1/r)$-Lipschitz (\cite{lu2023projection}) and $r=2d_x$ (Lemma~\ref{lem:transformation}). (2) follows from Lemma~\ref{lem:extension-lip} with $\|\cdot\|=\|\cdot\|_1$ and $\kappa=2d_x$ in Lemma~\ref{lem:transformation}.
\end{proof}

\subsection{Proof of Lemma~\ref{lem:transformation}}

\begin{proof}
The linear map $\phi$ is simply given by $\forall u=(u_1,\dots,u_{d_x})\in\mathbb{R}^{d_x}$,
\begin{align}
\label{eq:transformation-map}
(u_1,\dots,u_{d_x})\mapsto (u_1,\dots,u_{d_x-1})-\frac{\mathbf{1}_{d_x-1}}{2(d_x-1)}.
\end{align}
It is easy to see that $\phi$ is bijective when restricted to $\Delta^{d_x}$, and $\phi$ is $1$-non-expanding; $\phi^{-1}$ is $2$-non-expanding w.r.t. $(\|\cdot\|_1, \|\cdot\|_1)$ and $d_x$-non-expanding w.r.t. $(\|\cdot\|_2, \|\cdot\|_2)$.

We move on to show the aspherity of $\K$. Fix $v\in\mathbb{R}^{d_y-1}$ such that $\|v\|_2\le r$. By choice of $r$, we have
\begin{align*}
&\min_{1\le i\le d_x-1}e_i^{\top}\left(\frac{\mathbf{1}_{d_x-1}}{2(d_x-1)}+v\right)\ge -r+\frac{1}{2(d_x-1)}\ge  0,\\
&\left\|v+\frac{\mathbf{1}_{d_x-1}}{2(d_x-1)}\right\|_1=\|v\|_1+\frac{1}{2}\le \sqrt{d_x-1}\|v\|_2+\frac{1}{2}\le 1.
\end{align*}
To see the upper bound, we have that $\forall u\in\K$, $u=v+\frac{\mathbf{1}_{d_x-1}}{2(d_x-1)}$, $v\ge 0, \|v\|_1\le 1$ and thus satisfies
\begin{align*}
\|u\|_2=\sqrt{\sum_{i=1}^{d_x-1}\left(v_i-\frac{1}{2(d_x-1)}\right)^2}\le 1-\frac{1}{2(d_x-1)}\le R. 
\end{align*}
\end{proof}

\newpage

\section{Proof of Theorem~\ref{thm:po-main}}

\subsection{Proof of Observation~\ref{obs:ext}}
\begin{proof}
$e_t$ is convex since $c_t^y$ is convex, $c_t^u\circ\phi^{-1}$ is a convex function (composition of convex and linear functions) over $\mathcal{K}$, and thus the Minkowski extension of $c_t^u\circ\phi^{-1}$ on $\K$ is convex (Lemma~\ref{lem:extension-property}). The inequality follows from the third property in Lemma~\ref{lem:extension-property}.
\begin{align*}
e_t(M)&\ge c_t^y(y_t(M; K_0))+c_t^u(\phi^{-1}(\frac{\phi(\tilde{u}_t(M;K_0))}{\pi_{\mathcal{K}}(\phi(\tilde{u}_t(M;K_0))}))\\
&=c_t^y(y_t(M; K_0))+c_t^u(u_t(M; K_0))\\
&=c_t(y_t(M;K_0),u_t(M;K_0)).
\end{align*}
\end{proof}

\subsection{Signal Realizability}
Unlike $\GPCS$ (and $\GPC$), the signals that $\GPCPOS$ combines to construct the control at each step are not the disturbances $w_{t-1:t-H}$, which are no longer identifiable due to the observation mechanism. Instead, like the partially observable variant of $\GPC$ \cite[Chapter 9]{hazan2022introduction}, the signals are the counterfactual observations that would have been observed had a ``default'' linear policy $y \mapsto K_0 y$ been followed for all time. Indeed, the following lemma shows that the signal $o_t$ computed in Line~\ref{line:compute-signal} of \cref{alg:gpc-po} is exactly $y_t(K_0)$ where $K_0$ is chosen (arbitrarily) in Line~\ref{line:choose-k0}.

\begin{lemma} [Signal realizability]
\label{lemma:signal-realizability}
Let $d_x,d_y,T \in \BN$ and let $\ML = (A,B,C,x_1,(\gamma_t)_t,(w_t)_t,(c_t)_t,\alpha)$ be a partially observable simplex LDS. For any $u_1,\dots,u_T \in \Delta^{d_x}$, let $y_t(u_{1:t-1})$ denote the observation at time $t$ in $\ML$ if controls $u_{1:t-1}$ were played at times $1,\dots,t-1$. Then for each $t$ it holds that
\begin{equation} y_t(u_{1:t-1}) = \sum_{i=1}^{t-1} \bar\lambda_{t,i}C((1-\alpha)A)^{i-1}\alpha Bu_{t-i} + \sum_{i=1}^t \lambda_{t,i}C((1-\alpha)A)^{i-1} w_{t-i},\label{eq:yt-unfold-po}\end{equation}
where $w_0:=x_1$.
Thus, for any $K_0 \in \BS^{d_y,d_x}$ and $t \in [T]$, it holds that
\begin{align}
  \label{eq:yt-unfold-po-2}y_t(u_{1:t-1}) - \sum_{i=1}^{t-1} \bar\lambda_{t,i}C((1-\alpha)A)^{i-1}\alpha B(u_{t-i} - K_0 y_{t-i}(K_0)) = y_t(K_0).\end{align}
\end{lemma}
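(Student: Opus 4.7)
The plan is a straightforward induction-and-subtraction argument. First I prove \eqref{eq:yt-unfold-po} by induction on $t$, then derive \eqref{eq:yt-unfold-po-2} immediately from it by comparing the unrollings for two different control sequences.

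For the first step, it is cleanest to prove the corresponding identity at the state level,
\begin{align*}
x_t = \sum_{i=1}^{t-1}\bar\lambda_{t,i}((1-\alpha)A)^{i-1}\alpha B u_{t-i} + \sum_{i=1}^t \lambda_{t,i}((1-\alpha)A)^{i-1} w_{t-i},
\end{align*}
with the conventions $w_0 := x_1$ and $\gamma_0 := 1$, and then left-multiply by $C$. Taking $\gamma_0=1$ forces $\lambda_{t,t}=\bar\lambda_{t,t-1}=\prod_{j=1}^{t-1}(1-\gamma_{t-j})$, which is exactly the coefficient one obtains on $x_1$ after unrolling all the way back to the initial state. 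The base case $t=1$ is the identity $x_1=\lambda_{1,1}w_0=x_1$. For the inductive step, I substitute the hypothesis into the recursion $x_{t+1}=(1-\gamma_t)(1-\alpha)A\,x_t+(1-\gamma_t)\alpha B u_t+\gamma_t w_t$, push the factor $(1-\gamma_t)(1-\alpha)A$ inside each summand, and invoke the elementary shift identities $(1-\gamma_t)\bar\lambda_{t,i}=\bar\lambda_{t+1,i+1}$ and $(1-\gamma_t)\lambda_{t,i}=\lambda_{t+1,i+1}$, both immediate from the product definitions in \eqref{eq:define-lambdas}. Re-indexing $i\mapsto i+1$ turns the ``old'' sums into the $i\geq 2$ contributions at time $t+1$, and the leftover boundary terms $(1-\gamma_t)\alpha B u_t=\bar\lambda_{t+1,1}\alpha B u_t$ and $\gamma_t w_t=\lambda_{t+1,1}w_t$ supply the $i=1$ terms.

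Once \eqref{eq:yt-unfold-po} is in hand, \eqref{eq:yt-unfold-po-2} is essentially a one-line corollary: apply it with the control sequence $u_s:=K_0 y_s(K_0)$ to get an expression for $y_t(K_0)$ whose disturbance component is identical to that of $y_t(u_{1:t-1})$, since neither depends on the controls. Subtracting cancels this common disturbance sum and leaves exactly the control-difference sum on the right-hand side of \eqref{eq:yt-unfold-po-2}.

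The whole argument is pure bookkeeping; there is no real mathematical obstacle. The only subtlety worth flagging is correctly matching the $i=t$ disturbance term in \eqref{eq:yt-unfold-po} with the natural coefficient on $x_1$ produced by full unrolling, which is precisely what the convention $\gamma_0:=1$ (equivalently, $\lambda_{t,t}=\bar\lambda_{t,t-1}$) takes care of. Everything else is mechanical re-indexing.
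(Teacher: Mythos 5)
Your proposal is correct and follows essentially the same route as the paper: reduce to the state-level unrolling identity, prove it by induction using the shift relations $(1-\gamma_t)\bar\lambda_{t,i}=\bar\lambda_{t+1,i+1}$ and $(1-\gamma_t)\lambda_{t,i}=\lambda_{t+1,i+1}$, then obtain \eqref{eq:yt-unfold-po-2} by applying \eqref{eq:yt-unfold-po} to the control sequence $u_s=K_0y_s(K_0)$ and cancelling the common disturbance sum. Your explicit handling of the $i=t$ boundary term via the convention $\gamma_0:=1$ (so that $\lambda_{t,t}=\bar\lambda_{t,t-1}$) is a detail the paper leaves implicit, and is a welcome clarification rather than a deviation.
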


\begin{proof}
It suffices to show that $\forall t$,
\begin{equation} x_t(u_{1:t-1}) = \sum_{i=1}^{t-1} \bar\lambda_{t,i}((1-\alpha)A)^{i-1}\alpha Bu_{t-i} + \sum_{i=1}^t \lambda_{t,i}((1-\alpha)A)^{i-1} w_{t-i}.\label{eq:xt-unfold-po}\end{equation}
We prove \cref{eq:xt-unfold-po} via induction on $t\in\mathbb{N}$. When $t=1$, $x_1=\lambda_{1,1}w_0$ by definition. Suppose for $t\in\mathbb{N}$, \cref{eq:xt-unfold-po} holds. Then,
\begin{align*}
x_{t+1}(u_{1:t})&=(1-\gamma_t)(1-\alpha)Ax_{t}(u_{1:t-1})+(1-\gamma_ t)\alpha Bu_t+\gamma_tw_t\\
&=(1-\gamma_t)\sum_{i=2}^{t}\bar{\lambda}_{t,i-1}((1-\alpha)A)^{i-1}\alpha Bu_{t+1-i}+(1-\gamma_t)\alpha Bu_t\\
& \quad +(1-\gamma_t)\sum_{i=2}^{t+1}\lambda_{t,i-1}((1-\alpha)A)^{i-1}w_{t+1-i}+\gamma_tw_t\\
&=\sum_{i=1}^{t}\bar{\lambda}_{t+1,i}((1-\alpha)A)^{i-1}\alpha Bu_{t+1-i}+\sum_{i=1}^{t+1}\lambda_{t+1,i}((1-\alpha)A)^{i-1}w_{t+1-i},
\end{align*}
where the last equality uses that $(1-\gamma_t)\bar{\lambda}_{t,i-1}=\bar{\lambda}_{t+1,i}$, $(1-\gamma_t)=\bar{\lambda}_{t+1,1}$, $(1-\gamma_t)\lambda_{t,i-1}=\lambda_{t+1,i}$, and $\gamma_t=\lambda_{t+1,1}$. 
\end{proof}

\subsection{Approximation of Linear Markov Policies}
\label{sec:approx-po}
We start by proving that every policy in the comparator class can be approximated by $\pi^M$ for some $M \in \MM_{d_x,d_y,H}$.  
We will also use the following equality:

%Similar to the fully observable setting, we need to show that the learning class $\mathcal{M}$ defined in \cref{def:learning-class} approximates the comparator class defined in \cref{def:po-comparator} well. 

\begin{observation}
\label{fact:linear-unfold}
For any $K \in \BS^{d_x,d_y}$ and $t \in [T]$, the observation in $\ML$ at time $t$ under policy $y \mapsto Ky$ can be written as
\begin{align}
\label{eq:K0-signal-unfold}
y_t(K)&=\sum_{i=1}^{t}\lambda_{t,i}C(\mathbb{A}_{K}^{\alpha})^{i-1}w_{t-i}.
\end{align}
Moreover, we have $u_t(K) = K \cdot y_t(K)$. 
\end{observation}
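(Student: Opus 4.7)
The plan is to reduce Observation~\ref{fact:linear-unfold} to an unfolding computation analogous to \cref{eq:xt-unfold-po} in Lemma~\ref{lemma:signal-realizability}, once the Markov feedback $u_t = Ky_t = KCx_t$ is substituted into the state recursion. Under this substitution, the state evolution from Definition~\ref{def:po-simplex-lds} collapses to a driftless linear recursion
\[
x_{t+1}(K) \;=\; (1-\gamma_t)\bigl[(1-\alpha)A + \alpha BKC\bigr] x_t(K) + \gamma_t w_t \;=\; (1-\gamma_t)\,\mathbb{A}_K^{\alpha}\, x_t(K) + \gamma_t w_t,
\]
where $\mathbb{A}_K^{\alpha} := (1-\alpha)A + \alpha BKC$ is the closed-loop transition matrix from Definition~\ref{def:po-comparator}. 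Note that $\mathbb{A}_K^{\alpha} \in \mathbb{S}^{d_x}$ because $A,B,K,C$ all lie in appropriate stochastic-matrix classes, so the recursion stays on $\Delta^{d_x}$ at every step.

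Next, I would establish the state-level identity
\[
x_t(K) \;=\; \sum_{i=1}^{t}\lambda_{t,i}(\mathbb{A}_K^{\alpha})^{i-1} w_{t-i},
\]
by induction on $t$, with the convention $w_0 := x_1$ and $\lambda_{1,1}=1$ as used in Lemma~\ref{lemma:signal-realizability}. The base case $t=1$ is immediate. For the inductive step, I would substitute the hypothesis into the driftless recursion above and use the factorizations $(1-\gamma_t)\lambda_{t,i-1} = \lambda_{t+1,i}$ and $\gamma_t = \lambda_{t+1,1}$, exactly as in the inductive step of Lemma~\ref{lemma:signal-realizability}; this is a direct consequence of the definitions in \cref{eq:define-lambdas}. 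Applying $C$ to both sides then yields $y_t(K) = C x_t(K) = \sum_{i=1}^{t}\lambda_{t,i} C(\mathbb{A}_K^{\alpha})^{i-1} w_{t-i}$, which is the first claim.

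The second claim $u_t(K) = K\cdot y_t(K)$ is immediate from the definition of the policy class in Definition~\ref{def:po-comparator}: the policy is simply $u_t = K y_t$, so evaluating along the counterfactual observation sequence $(y_t(K))_t$ gives $u_t(K) = K y_t(K)$. There is no real technical obstacle here; the only thing to be slightly careful about is the bookkeeping of indices in the $\lambda_{t,i}$ recursion at the boundary $i=1$ (needing $\lambda_{t+1,1}=\gamma_t$), which is handled identically to the proof of Lemma~\ref{lemma:signal-realizability} and reuses the same convention $w_0 = x_1$.
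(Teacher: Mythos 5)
Your proof is correct and matches the intended argument: the paper states this observation without a separate proof, treating it as the immediate specialization of the unfolding induction in Lemma~\ref{lemma:signal-realizability} to the closed-loop matrix $\mathbb{A}_K^{\alpha}$, which is exactly the induction you carry out (with the same $\lambda$-recursion identities and the convention $w_0=x_1$). The second claim $u_t(K)=Ky_t(K)$ is, as you note, definitional.
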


\begin{lemma} [Approximation]
\label{lem:po-approx}
Suppose that the cost functions $c_1,\dots,c_T$ of $\ML$ satisfy \cref{asm:po-convex-loss} with Lipschitz parameter $L>0$. Fix $\tau,\eps>0$ and suppose $H$ satisfies $H\ge 2\tau\lceil \log(512d_x^{7/2}LT^2/\eps)\rceil$. For any $K^{\star}\in\Ksim_\tau$ and $K_0\in\BS^{d_x,d_y}$, there is some $M\in\MM_{d_x,d_y,H}$ such that
\begin{align*}
\left|\sum_{t=1}^T e_t(M)-\sum_{t=1}^Tc_t(y_t(K^{\star}),u_t(K^{\star}))\right|\le \eps.
\end{align*}
\end{lemma}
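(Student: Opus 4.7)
The plan is to exhibit an explicit $M \in \MM_{d_x,d_y,H}$ such that the full-memory counterfactual control $\hat u_t(M;K_0)$ equals $u_t(K^\star)$ for every $t$, then to separately control (i) the truncation error $\hat u_t(M;K_0) - \tilde u_t(M;K_0)$, (ii) the Minkowski-projection error passing from $\tilde u_t(M;K_0)$ to $u_t(M;K_0)$, (iii) the propagation of these control errors into observation errors through the dynamics, and (iv) the extension gap $e_t(M) - c_t(y_t(M;K_0),u_t(M;K_0))$. Summing these and applying Lipschitzness of $c_t$ will yield the claim.

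For the construction, I expand $u_t(K^\star) = K^\star y_t(K^\star) = \sum_{k=1}^t \lambda_{t,k} K^\star C(\Cr{K^\star}^\alpha)^{k-1} w_{t-k}$ via Observation~\ref{fact:linear-unfold} and expand each $y_{t-j}(K_0)$ inside $\hat u_t(M;K_0)$ similarly. The key algebraic simplification is the identity $\bar\lambda_{t,j}\lambda_{t-j,k-j}=\lambda_{t,k}$, which lets me match coefficients of $w_{t-k}$ by a time-independent identity
\[
K^\star C(\Cr{K^\star}^\alpha)^{k-1} = M^{[0]}C(\Cr{K_0}^\alpha)^{k-1} + \alpha\sum_{j=1}^{k-1}M^{[j]}C(\Cr{K_0}^\alpha)^{k-j-1}.
\]
Applying the telescoping identity $X^{n}-Y^{n}=\sum_{j=1}^{n}X^{j-1}(X-Y)Y^{n-j}$ with $X=\Cr{K^\star}^\alpha$, $Y=\Cr{K_0}^\alpha$ and $X-Y=\alpha B(K^\star-K_0)C$ suggests the choice $M^{[0]}:=K^\star$ and $M^{[j]}:=K^\star C(\Cr{K^\star}^\alpha)^{j-1}B(K^\star-K_0)$ for $j\ge 1$. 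To verify $M^{[j]}\in\BS_{0}^{d_x,d_y}$, note that $K^\star C(\Cr{K^\star}^\alpha)^{j-1}B$ is a product of column-stochastic matrices hence column-stochastic, and each column of $K^\star-K_0$ is a difference of two probability vectors (mean-zero, $\ell_1$ norm $\le 2$). The critical observation is that column-stochastic matrices are $\ell_1$-nonexpansive and preserve the mean-zero property, and any mean-zero vector $u$ satisfies $\|u\|_\infty\le\tfrac{1}{2}\|u\|_1$; hence $M^{[j]}$ has entries in $[-1,1]$ with zero column sums.

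For the truncation error, $\hat u_t(M;K_0)-\tilde u_t(M;K_0)=\alpha\sum_{i=H+1}^{t-1}\bar\lambda_{t,i}M^{[i]}y_{t-i}(K_0)$. Each summand has the form $K^\star C \cdot (\Cr{K^\star}^\alpha)^{i-1}\cdot B(K^\star-K_0)y_{t-i}(K_0)$, where $(K^\star-K_0)y_{t-i}(K_0)$ is a difference of probability vectors; the condition $K^\star\in\Ksim_\tau$ implies the standard contraction $\|(\Cr{K^\star}^\alpha)^{i-1}v\|_1\le \|v\|_1\cdot 2^{-\Omega((i-1)/\tau)}$ on the mean-zero subspace. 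Summing the resulting geometric series gives $\|\hat u_t-\tilde u_t\|_1=O(\alpha\tau)\cdot 2^{-\Omega(H/\tau)}$. Since $\hat u_t=u_t(K^\star)\in\Delta^{d_x}$ we have $\phi(\hat u_t)\in\K$, so the Minkowski projection fixes $\phi(\hat u_t)$; Corollary~\ref{cor:lip-control} together with the Lipschitz bounds on $\phi,\phi^{-1}$ from Lemma~\ref{lem:transformation} then yield $\|u_t(M;K_0)-u_t(K^\star)\|_1=O(d_x^{7/2})\|\tilde u_t-\hat u_t\|_1$.

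Finally, unrolling the dynamics exactly as in Lemma~\ref{lemma:signal-realizability} gives $\|y_t(M;K_0)-y_t(K^\star)\|_1\le\alpha\sum_{s<t}\|u_s(M;K_0)-u_s(K^\star)\|_1$, and Lipschitzness of $c_t^y,c_t^u$ bounds $|c_t(y_t(M;K_0),u_t(M;K_0))-c_t(y_t(K^\star),u_t(K^\star))|$ in terms of these control and observation gaps. The extension gap factors as $e_t(M)-c_t(y_t(M;K_0),u_t(M;K_0))=(\pi_\K(\phi(\tilde u_t))-1)\,c_t^u(u_t(M;K_0))$, and $|\pi_\K(\phi(\tilde u_t))-1|=|\pi_\K(\phi(\tilde u_t))-\pi_\K(\phi(\hat u_t))|\le 2d_x\|\tilde u_t-\hat u_t\|_1$ by Corollary~\ref{cor:lip-control} and the $1$-Lipschitzness of $\phi$, while $c_t^u\le 2L$ by the zero-point assumption and simplex diameter $2$. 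Summing over $t$ produces a total bound of order $Ld_x^{7/2}\tau T^2\cdot 2^{-\Omega(H/\tau)}$, which is at most $\eps$ under the stated choice of $H$. The main obstacle I expect is Step~1---guessing the correct construction and verifying the entrywise $[-1,1]$ bound for $M^{[j]}$; this hinges on the $\lambda$-identity $\bar\lambda_{t,j}\lambda_{t-j,k-j}=\lambda_{t,k}$ (which makes the coefficient-matching time-uniform) and on the $\|\cdot\|_\infty\le\tfrac{1}{2}\|\cdot\|_1$ bound for mean-zero vectors, neither of which is obvious a priori.
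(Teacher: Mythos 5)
Your proposal is correct and follows essentially the same route as the paper's proof: the identical construction $M^{[0]}=K^\star$, $M^{[i]}=K^\star C(\mathbb{A}_{K^\star}^{\alpha})^{i-1}B(K^\star-K_0)$, the same $\lambda$-identity and telescoping matrix identity to show $\hat u_t(M;K_0)=u_t(K^\star)$, the same mixing-based truncation bound, the same Minkowski-projection Lipschitz argument (via Lemma~\ref{thm:proj-lip} with $\kappa=2d_x$), and the same extension-gap bound via $|\pi_{\mathcal K}(\phi(\tilde u_t))-1|$. The only cosmetic difference is that you observe the extension gap factors exactly as $(\pi_{\mathcal K}(\phi(\tilde u_t))-1)\,c_t^u(u_t(M;K_0))$, which is a slightly cleaner version of the paper's two-term decomposition.
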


\begin{proof}
We break down the proof into two parts. We will construct $M\in\MM_{d_x,d_y,H}$ and show separately that the following two inequalities hold.
\begin{align}
\left|\sum_{t=1}^T c_t(y_t(M;K_0),u_t(M;K_0))-\sum_{t=1}^T c_t(y_t(K^{\star}), u_t(K^{\star}))\right|\le \frac{\eps}{2} \label{eq:cost-approx},\\
\left|\sum_{t=1}^T e_t(M)-\sum_{t=1}^T c_t(y_t(M;K_0),u_t(M;K_0))\right|\le \frac{\eps}{2} \label{eq:ext-approx}.
\end{align}
\paragraph{Proof of \cref{eq:cost-approx}.} We start with \cref{eq:cost-approx}. For each $M$, recall that $\tilde{u}_t(M;K_0)$ is the control parameterized by $M$ using signals generated by the linear policy $K_0$ before the Minkowski projection. Formally, this is defined as
\begin{align*}
\tilde{u}_t(M;K_0) := M^{[0]}y_t(K_0)+\alpha\sum_{i=1}^H \bar{\lambda}_{t,i}M^{[i]}y_{t-i}(K_0),
\end{align*}
so that $u_t(M;K_0)$ is obtained by the Minkowski projection of $(y_t(M;K_0),\tilde{u}_t(M;K_0))$. First, we show that there exists $M$ such that the sequence $(\tilde{u}_t(M;K_0))_t$ approximates $(u_t(K^\st))_t$ well. Second, we show that the error induced by the projection operation is small. Third, we use Lipschitzness of the cost functions to conclude. 

Concretely, define $M = M^{[0:H]}$ by $M^{[0]} := K^\st$ and $M^{[i]} := K^{\star}C(\mathbb{A}_{K^{\star}}^{\alpha})^{i-1}B(K^{\star}-K_0)$ for each $i \in [H]$. Note that since $K^{\star}C(\mathbb{A}_{K^{\star}}^{\alpha})^{i-1}BK^{\star}\in\mathbb{S}^{d_x, d_y}$ and $ K^{\star}C(\mathbb{A}_{K^{\star}}^{\alpha})^{i-1}BK_0 \in \BS^{d_x,d_y}$, we have that $M^{[i]}\in\mathbb{S}_{0}^{d_x,d_y}$ for each $i \in [H]$, and hence $M\in \MM_{d_x,d_y,H}$. We will show that the following bound hold:
\begin{align}
\|\tilde{u}_t(M;K_0)-u_t(K^{\star})\|_1&\le \frac{\eps}{512d_x^{7/2}LT^2}, \label{eq:utilde-uk}
\end{align}
and use \cref{eq:utilde-uk} to show that
\begin{align}
\|u_t(M;K_0)-u_t(K^{\star})\|_1&\le \frac{\eps}{8LT^2}\label{eq:u-uk}.
\end{align}
We start with \cref{eq:utilde-uk}. For notational convenience, define $M^{[i]} := K^{\star}C(\mathbb{A}_{K^{\star}}^{\alpha})^{i-1}B(K^{\star}-K_0)$ for each $H<i\le t-1$, and define
\begin{align*}
\hat{u}_{t}(M;K_0):=M^{[0]}y_t(K_0)+\alpha\sum_{i=1}^{t-1}\bar{\lambda}_{t,i}M^{[i]}y_{t-i}(K_0).
\end{align*}
Then by applying Observation~\ref{fact:linear-unfold} to $K_0$, we have
\begin{align*}
& \quad \hat{u}_{t}(M;K_0) \\
&=M^{[0]}y_t(K_0)+\alpha\sum_{i=1}^{t-1}\sum_{j=i+1}^{t}\bar{\lambda}_{t,i}\lambda_{t-i,j-i}M^{[i]}C(\mathbb{A}_{K_{0}}^{\alpha})^{j-i-1}w_{t-j}\\
&=M^{[0]}y_t(K_0)+\alpha\sum_{j=1}^{t}\left(\sum_{i=1}^{j-1}\bar{\lambda}_{t,i}\lambda_{t-i,j-i}M^{[i]}C(\mathbb{A}_{K_{0}}^{\alpha})^{j-i-1}\right)w_{t-j}\\
&\overset{(a)}{=} M^{[0]}y_t(K_0)+\sum_{j=1}^{t}\lambda_{t,j}K^{\star}C\left(\sum_{i=1}^{j-1}(\mathbb{A}_{K^\st}^{\alpha})^{i-1}\alpha B(K^{\star}-K_0)C(\mathbb{A}_{K_{0}}^{\alpha})^{j-i-1}\right)w_{t-j}\\
&=M^{[0]}y_t(K_0)+\sum_{j=1}^{t}\lambda_{t,j}K^{\star}C\left(\sum_{i=1}^{j-1}(\mathbb{A}_{K_{0}}^{\alpha}+\alpha B(K^{\star}-K_0)C)^{i-1}\alpha B(K^{\star}-K_0)C(\mathbb{A}_{K_{0}}^{\alpha})^{j-i-1}\right)w_{t-j}\\
%& \ \ \ \ \ + \sum_{j=1}^{t-1}\bar{\lambda}_{t,j}K^{\star}C\left[\sum_{i=1}^{j-1}(\tilde{A}+B(K^{\star}-K_0)C)^{i-1}B(K^{\star}-K_0)C\tilde{A}^{j-i-1}\right]BK_0e_{t-j}\\
%& \ \ \ \ \ + \sum_{i=1}^{t-1}\bar{\lambda}_{t,i}K^{\star}C(A+BK^{\star}C)^{i-1}B(K^{\star}-K_0)e_{t-i}\\
&\overset{(b)}{=} M^{[0]}y_t(K_0)+\sum_{j=1}^{t}\lambda_{t,j}K^{\star}C(\mathbb{A}_{K^{\star}}^{\alpha})^{j-1}w_{t-j}-\sum_{j=1}^{t-1}\lambda_{t,j}K^{\star}C(\mathbb{A}_{K_{0}}^{\alpha})^{j-1}w_{t-j}\\
%& \ \ \ \ \ + \sum_{j=1}^{t-1}\bar{\lambda}_{t,j}K^{\star}C(A+BK^{\star}C)^{j-1}BK^{\star}e_{t-j}-\sum_{j=1}^{t-1}\bar{\lambda}_{t,j}K^{\star}C(A+BK_0C)^{j-1}BK_0e_{t-j}\\
&\overset{(c)}{=} \sum_{j=1}^{t}\lambda_{t,j}K^{\star}C(\mathbb{A}_{K^{\star}}^{\alpha})^{j-1}w_{t-j}\\%+\sum_{j=1}^{t-1}\bar{\lambda}_{t,j}K^{\star}C(A+BK^{\star}C)^{j-1}BK^{\star}e_{t-j}+K^{\star}e_t\\
&\overset{(d)}{=}u_t(K^{\star}),
\end{align*}
where equality $(a)$ uses the definition of $M^{[i]}$ together with the fact that
\begin{align}
\label{eq:lambda-property}
\bar{\lambda}_{t,i}\lambda_{t-i, j-i}&=\left(\prod_{k=1}^i (1-\gamma_{t-k})\right)\gamma_{t-j}\cdot \prod_{k=1}^{j-i-1}(1-\gamma_{t-i-k}) \nonumber\\
&=\gamma_{t-j}\left(\prod_{k=1}^i (1-\gamma_{t-k})\right)\left(\prod_{k=i+1}^{j-1} (1-\gamma_{t-k})\right) \nonumber\\
&=\gamma_{t-j}\prod_{k=1}^{j-1}(1-\gamma_{t-k}) \nonumber\\
&=\lambda_{t,j}
\end{align}
by \cref{eq:define-lambdas}; equality $(b)$ follows from the matrix equality
\begin{align*}
X^n=Y^n+\sum_{i=1}^n X^{i-1}(X-Y)Y^{n-i},
\end{align*}
with 
\[X=(\mathbb{A}_{K_{0}}^{\alpha}+\alpha B(K^{\star}-K_0)C)=\mathbb{A}_{K^{\star}}^{\alpha}, \ \ \ Y=\mathbb{A}_{K_{0}}^{\alpha},\]
equality $(c)$ uses Observation~\ref{fact:linear-unfold} with $K_0$ together with the choice of $M\^0 = K^\star$; and equality $(d)$ uses Observation~\ref{fact:linear-unfold} with $K^\st$.

Let $p^{\star}\in\Delta^{d_x}$ be the unique stationary distribution of $\mathbb{A}_{K^{\star}}^{\alpha}$. Since $u_t(K^{\star})=\hat{u}_{t}(M;K_0)$, we have
\begin{align*}
\|u_{t}(K^{\star})-\tilde{u}_{t}(M;K_0)\|_1&\leq\left\|\sum_{i=H+1}^{t-1}\bar{\lambda}_{t,i}M^{[i]}y_{t-i}(K_0)\right\|_1\\
&=\left\|\sum_{i=H+1}^{t-1}\bar{\lambda}_{t,i}K^{\star}C(\mathbb{A}_{K^{\star}}^{\alpha})^{i-1}B(K^{\star}-K_0)y_{t-i}(K_0)\right\|_1\\
&\le \left\|\sum_{i=H+1}^{t-1}\left(\bar{\lambda}_{t,i}K^{\star}C(\mathbb{A}_{K^{\star}}^{\alpha})^{i-1}\left(BK^{\star}y_{t-i}(K_0)\right)-\bar{\lambda}_{t,i}K^{\star}Cp^{\star}\right)\right\|_1\\
&\qquad+ \left\|\sum_{i=H+1}^{t-1}\left(\bar{\lambda}_{t,i}K^{\star}C(\mathbb{A}_{K^{\star}}^{\alpha})^{i-1}\left(BK_0y_{t-i}(K_0)\right)-\bar{\lambda}_{t,i}K^{\star}Cp^{\star}\right)\right\|_1\\
&\leq \sum_{i=H+1}^{t-1} \bar\lambda_{t,i} \norm{K^\st C (\mathbb{A}_{K^\st}^\alpha)^{i-1}(BK^\st y_{t-i}(K_0) - p^\st)}_1 \\ 
&\qquad+ \sum_{i=H+1}^{t-1} \bar\lambda_{t,i} \norm{K^\st C (\mathbb{A}_{K^\st}^\alpha)^{i-1}(BK_0 y_{t-i}(K_0) - p^\st)}_1 \\
&\leq \sum_{i=H+1}^{t-1} \bar\lambda_{t,i} \norm{(\mathbb{A}_{K^\st}^\alpha)^{i-1}(BK^\st y_{t-i}(K_0) - p^\st)}_1 \\ 
&\qquad+ \sum_{i=H+1}^{t-1} \bar\lambda_{t,i} \norm{ (\mathbb{A}_{K^\st}^\alpha)^{i-1}(BK_0 y_{t-i}(K_0) - p^\st)}_1 \\
&\le 2\sum_{i=H+1}^{t-1} 2^{-H/\tau} \\
&\leq \frac{\eps}{512d_x^{7/2}LT^2}.
\end{align*}
where the penultimate inequality is by Lemma 18 in \citep{golowich2024online} and the final inequality is by choice of $H$. This proves \cref{eq:utilde-uk}. 

Recall that $u_t(M;K_0)$ is obtained via the operation
\begin{align*}
u_t(M;K_0)=\phi^{-1}(\frac{\phi( \tilde{u}_t(M;K_0))}{\pi_{\mathcal{K}}(\phi( \tilde{u}_t(M;K_0))}).
\end{align*}
We have that
\begin{align*}
\|u_t(M;K_0)-u_t(K^{\star})\|_1&=\left\|\phi^{-1}(\frac{\phi( \tilde{u}_t(M;K_0))}{\pi_{\mathcal{K}}(\phi( \tilde{u}_t(M;K_0))})-\phi^{-1}(\frac{\phi( u_t(K^{\star}))}{\pi_{\mathcal{K}}(\phi( u_t(K^{\star}))})\right\|_1\\
&\le 2\cdot \left\|\frac{\phi( \tilde{u}_t(M;K_0))}{\pi_{\mathcal{K}}(\phi( \tilde{u}_t(M;K_0))}-\frac{\phi( u_t(K^{\star}))}{\pi_{\mathcal{K}}(\phi( u_t(K^{\star}))}\right\|_1\\
&\le 64 d_x^{7/2}\cdot \|\tilde{u}_t(M;K_0))-u_t(K^{\star})\|_1\\
&\le \frac{\eps }{8LT^2},
\end{align*}
where the first inequality follows from that $\phi^{-1}$ is $2$-non-expanding with respect to $(\|\cdot\|_1, \|\cdot\|_1)$ (Lemma~\ref{lem:transformation}); the second inequality follows from Lemma~\ref{thm:proj-lip} by taking $\kappa=2d_x$; and that $\phi$ is $1$-non-expanding with respect to $(\|\cdot\|_1, \|\cdot\|_1)$ (Lemma~\ref{lem:transformation}); the third inequality follows from \cref{eq:utilde-uk}.

Thus, we proved \cref{eq:u-uk}, which
holds for all $t$. We next bound $\|y_t(K^{\star})-y_t(M;K_0)\|_1$.
From \cref{eq:yt-unfold-po-2} and \cref{eq:u-uk}, we have
\begin{align*}
\|y_t(K^{\star})-y_t(M;K_0)\|_1
&\le \sum_{i=1}^{t-1}\bar{\lambda}_{t,i}\left\|C[(1-\alpha)A]^{i-1}B\left[u_{t-i}(K^{\star})-u_{t-i}(M;K_0)\right]\right\|_1 \\ 
&\leq \frac{\vep}{8 LT^2} \sum_{i=1}^{t-1}\bar\lambda_{t,i} \leq \frac{\vep}{8LT}.
\end{align*}
Combining the bounds on $\|u_{t}(K^{\star})-u_t(M;K_0)\|_1$ and $\|y_t(K^{\star})-y_t(M;K_0)\|_1$ and using the fact that $c_t$ satisfies \cref{asm:po-convex-loss} for all $t$, we have that the approximation error is bounded by
\begin{align}
\label{eq:c-approx}
\left|\sum_{t=1}^T c_t(y_t(M;K_0),u_t(M;K_0))-\sum_{t=1}^Tc_t(y_t(K^{\star}),u_t(K^{\star}))\right|\le \frac{\eps}{4}.
\end{align}
\paragraph{Proof of \cref{eq:ext-approx}.} We move on to bound the difference between the pseudo loss and the cost function in \cref{eq:ext-approx}. 
\begin{align*}
& \quad \left|\sum_{t=1}^T e_t(M)-\sum_{t=1}^T c_t(y_t(M;K_0),u_t(M;K_0))\right|\\
&\le \underbrace{\left|\sum_{t=1}^T (\pi_{\mathcal{K}}(\phi(\tilde{u}_t(M;K_0)))-1) \cdot c_t^u(\phi^{-1}(\frac{\phi(\tilde{u}_t(M;K_0))}{\pi_{\mathcal{K}}(\phi(\tilde{u}_t(M;K_0)))}))\right|}_{(1)} \\
& \quad +\underbrace{\left|\sum_{t=1}^T c_t^u(\phi^{-1}(\frac{\phi(\tilde{u}_t(M;K_0))}{\pi_{\mathcal{K}}(\phi(\tilde{u}_t(M;K_0)))}))- \sum_{t=1}^T c_t^u(u_t(M;K_0))\right|}_{(2)}.
\end{align*}
By Lipschitz property of $\pi_{\mathcal{K}}$ (Lemma~\ref{cor:lip-control}) and non-expanding condition of $\phi$ (Lemma~\ref{lem:transformation}) and that $\phi(u_t(K^{\star}))\in\K$, we have that
\begin{align*}
|\pi_{\mathcal{K}}(\phi( \tilde{u}_t(M;K_0)))-1|&=
|\pi_{\mathcal{K}}(\phi(\tilde{u}_t(M;K_0)))-\pi_{\mathcal{K}}(\phi( u_t(K^{\star})))|\\
&\le 2d_x\cdot \|\tilde{u}_t(M;K_0)-u_t(K^{\star})\|_1\\
&\le \frac{\eps }{256d_x^{5/2}LT^2},
\end{align*}
where the last inequality follows from \cref{eq:utilde-uk}. 
By the Lipschitz condition of $c_t^u$ and that $c_t^u$ attains $0$ on $\mathcal{V}$ (\cref{asm:po-convex-loss}), we have
\begin{align*}
(1)\le L \sum_{t=1}^T (\pi_{\mathcal{K}}(\phi(\tilde{u}_t(M;K_0)))-1) \le \frac{\eps}{256d_x^{5/2}T}.
\end{align*}
We move on to bound $(2)$:
\begin{align*}
(2)&\le L\sum_{t=1}^T \left\|\phi^{-1}(\frac{\phi(\tilde{u}_t(M;K_0))}{\pi_{\mathcal{K}}(\phi(\tilde{u}_t(M;K_0)))})-u_t(M;K_0)\right\|_1\\
&\le 2L\sum_{t=1}^T \left\|\frac{\phi(\tilde{u}_t(M;K_0))}{\pi_{\mathcal{K}}(\phi(\tilde{u}_t(M;K_0)))}-\phi(u_t(M;K_0))\right\|_1\\
&\le 2L\sum_{t=1}^{T}  \frac{\|\phi(\tilde{u}_t(M;K_0))-\phi(u_t(M;K_0))\|_1}{\pi_{\mathcal{K}}(\phi(\tilde{u}_t(M;K_0)))} \\
&\quad + 2L\sum_{t=1}^{T}  \left(1-\frac{1}{\pi_{\mathcal{K}}(\phi(\tilde{u}_t(M;K_0)))}\right)\cdot \|\phi(u_t(M;K_0))\|_1\\
&\le 2L\sum_{t=1}^T \|\tilde{u}_t(M;K_0)-u_t(M;K_0)\|_1+\frac{\eps}{64d_x^{5/2}T}\\
&\le \frac{\eps}{2T}+\frac{\eps}{64d_x^{5/2}T},
\end{align*}
where the first inequality follows from the Lipschitz assumption on $c_t^u$; the second inequality follows from that $\phi^{-1}$ is $2$-non-expanding w.r.t. ($\|\cdot\|_1$, $\|\cdot\|_1$) on $\K$; the second to last inequality follows from $\pi_{\mathcal{K}}\ge 1$, $\phi$ is $1$-non-expanding w.r.t. ($\|\cdot\|_1,\|\cdot\|_1$), and that $\pi_{\mathcal{K}}(\phi(\tilde{u}_t(M;K_0)))\le 2$, $\|\phi(u_t(M;K_0))\|_1=1$; the last inequality follows from \cref{eq:utilde-uk} and \cref{eq:u-uk}.

Combining, we have that
\begin{align}
\label{eq:e-approx}
\left|\sum_{t=1}^T e_t(M)-\sum_{t=1}^T c_t(y_t(M;K_0),u_t(M;K_0))\right|\le \frac{3\eps}{4}.
\end{align} 
Combining \cref{eq:c-approx} and \cref{eq:e-approx} gives the desired bound.
\end{proof}

\subsection{Bounding the Memory Mismatch Error}
\label{sec:mem-mismatch-po}
In this section we prove the analogue of Lemma 21 in \citep{golowich2024online} for the partially observable setting. %Analogous to \cref{sec:memory-mismatch}, in this section, we prove \cref{lem:po-mem-mismatch}, which allows us to show that an algorithm with bounded aggregate loss with respect to the loss functions $\ell_t$ defined on \cref{line:po-lt} of \cref{alg:gpc-po} in fact has bounded aggregate cost with respect to the cost functions $c_t$ chosen by the adversary. 
\begin{lemma} [Memory mismatch error]
\label{lem:po-mem-mismatch}
Suppose that $(c_t)_t$ satisfy \cref{asm:po-convex-loss} with Lipschitz parameter $L$. Let $\tau,\beta > 0$, and suppose that $\Ksim_\tau(\ML)$ is nonempty. Consider the execution of $\GPCPOS$ (\cref{alg:gpc-po}) on $\ML$. If the iterates $(M_t\^{0:H})_{t \in [T]}$ satisfy
 \begin{align}
   % \gpcnorm{(p_t, M_t\^{1:H}) - (p_{t+1}, M_{t+1}\^{1:H})} \leq \beta
   \max_{1 \leq t \leq T-1} \max_{i \in [H]} \oneonenorm{M_t\^i - M_{t+1}\^i} \leq \beta,
   \label{eq:ptmt-change-po}
 \end{align}
then for each $t \in [T]$, the loss function $\ell_t$ computed at time step $t$ satisfies
    \begin{align}
| c_t(y_t(M_t;K_0),u_t(M_t;K_0)) - c_t(y_t, u_t)| &\leq O\left(L d_x^{7/2} \beta\log^2(1/\beta)H\tau^2\right)\nonumber.
    \end{align}
\iffalse
Assume that \cref{alg:gpc-po} satisfies
\begin{align*}
\max_{1\le t\le T-1}\max_{0\le i\le H} \|M_{t}^{[i]}-M_{t+1}^{[i]}\|_{1\rightarrow1}\le \beta,
\end{align*}
then we have
\begin{align*}
\left|\sum_{t=1}^T \ell_t(M_t)-\sum_{t=1}^T c_t(y_t,u_t)\right|\le O(L\tau^2\beta\log^2(1/\beta)HT).
\end{align*}
\fi
\end{lemma}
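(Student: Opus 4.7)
By Lemma~\ref{lemma:signal-realizability}, the signal $o_s$ computed in Line~\ref{line:compute-signal} equals $y_s(K_0)$ exactly, so the raw control in Line~\ref{line:control-po-utot} is $\tilde u_t(M_t;K_0)$, and the Minkowski-projected control $u_t$ coincides with $u_t(M_t;K_0)$. Under the decomposable structure $c_t = c_t^y + c_t^u$ of Assumption~\ref{asm:po-convex-loss}, the $c_t^u$ pieces cancel, leaving
\begin{align*}
|c_t(y_t(M_t;K_0),u_t(M_t;K_0)) - c_t(y_t,u_t)| \le L\,\|y_t - y_t(M_t;K_0)\|_1.
\end{align*}
Thus the entire error is an observation error, and no contribution comes from the projected control itself.

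\textbf{Step 2 (unfold to a control-gap sum).} Applying Lemma~\ref{lemma:signal-realizability} to both $y_t$ (with the actual controls $u_s = u_s(M_s;K_0)$) and $y_t(M_t;K_0)$, the disturbance contributions are identical and cancel:
\begin{align*}
y_t - y_t(M_t;K_0) = \sum_{i=1}^{t-1}\bar\lambda_{t,i}\,C((1-\alpha)A)^{i-1}\alpha B\bigl(u_{t-i}(M_{t-i};K_0) - u_{t-i}(M_t;K_0)\bigr).
\end{align*}
Since the signals $y_s(K_0)$ do not depend on which $M$ is plugged in, the $i$-th summand depends only on $M_{t-i}-M_t$. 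A telescoping of \cref{eq:ptmt-change-po} gives $\|M_{t-i}^{[j]} - M_t^{[j]}\|_{1\to 1}\le i\beta$, so the raw-control gap is bounded by $(H+1)i\beta$ in $\|\cdot\|_1$. Composing this with the Lipschitzness of $\phi^{-1}\!\circ\Proj_{\Delta^{d_x}}^{MP}\!\circ\phi$ from Lemma~\ref{thm:proj-lip} and Lemma~\ref{cor:lip-control}, and passing through the $\|\cdot\|_1$-to-$\|\cdot\|_2$ equivalence on $\mathbb{R}^{d_x-1}$, yields
\begin{align*}
\delta_i := \|u_{t-i}(M_{t-i};K_0) - u_{t-i}(M_t;K_0)\|_1 \le \min\bigl\{2,\ O(d_x^{7/2}\,H\,i\,\beta)\bigr\}.
\end{align*}

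\textbf{Step 3 (head/tail split).} Because $C,B$ are column-stochastic and $(1-\alpha)A$ is column-sub-stochastic of $\|\cdot\|_{1\to 1}$-norm $1-\alpha$, the display above collapses to $\|y_t-y_t(M_t;K_0)\|_1 \le \sum_{i\ge 1}\bar\lambda_{t,i}\,\alpha(1-\alpha)^{i-1}\delta_i$. I would split this sum at $i^\star=\Theta(\tau\log(1/\beta))$. For $i\le i^\star$, use the refined bound $\delta_i\le O(d_x^{7/2}Hi\beta)$ together with $\sum_i\alpha(1-\alpha)^{i-1}i\le 1/\alpha$, producing a head contribution of order $O(d_x^{7/2}H\beta\log(1/\beta)/\alpha)$. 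For $i>i^\star$, use $\delta_i\le 2$ together with the tail bound $\sum_{i>i^\star}\alpha(1-\alpha)^{i-1}\le(1-\alpha)^{i^\star}\le\beta$. To convert $1/\alpha$ into $\tau$, invoke the implication ``$\Ksim_\tau(\ML)$ nonempty $\Rightarrow \alpha\ge\Omega(1/\tau)$'', which holds because a single step of $\mathbb{A}_K^\alpha=(1-\alpha)A+\alpha BKC$ can move probability mass by at most $O(\alpha)$ in total variation, so reaching mixing in $\tau$ steps from an arbitrary simplex input forces $\alpha\tau\ge\Omega(1)$. This absorbs one factor of $\tau$, and the $i^\star=\tau\log(1/\beta)$ threshold contributes the second $\tau$ and the $\log^2(1/\beta)$ factor, matching the stated bound.

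\textbf{Main obstacle.} The crux is pinning down the $\tau^2$ factor cleanly. Because the matrices involved are column-stochastic rather than symmetric, no spectral-gap machinery is available, and the implication ``$\mathbb{A}_K^\alpha$ mixes in $\tau$ steps $\Rightarrow \alpha\ge\Omega(1/\tau)$'' must be derived directly from the $\ell_1$-mixing definition of $t^{\mathrm{mix}}$ via a total-variation drift argument. An alternative route, which requires $K_0\in\Ksim_\tau(\ML)$, is to re-expand $\Delta x_t := x_t - x_t(M_t;K_0)$ in the closed-loop form
\begin{align*}
\Delta x_{s+1} = (1-\gamma_s)\,\mathbb{A}_{K_0}^\alpha\,\Delta x_s + (1-\gamma_s)\alpha B\bigl((u_s-K_0y_s) - (u_s(M_t;K_0)-K_0y_s(M_t;K_0))\bigr),
\end{align*}
and exploit the $2^{-\Omega(i/\tau)}$ contraction of $\mathbb{A}_{K_0}^\alpha$ on zero-sum inputs (both $u$-differences and the implicit $K_0 C\Delta x$ term live in the zero-sum subspace). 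This replaces the explicit $1/\alpha$ accounting with a direct mixing argument, at the cost of having to control the recursive appearance of $\Delta x$ on the right-hand side via a Gronwall-type bound.
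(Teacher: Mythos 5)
Your Steps 1 and 2 match the paper's proof: the played control coincides with $u_t(M_t;K_0)$ so only the observation gap matters, the unfolding of $y_t-y_t(M_t;K_0)$ into a weighted sum of control gaps is the same, and the bound $\delta_i\le\min\{2,\,O(d_x^{7/2}Hi\beta)\}$ is exactly what the paper derives. The gap is in Step 3. The implication ``$\Ksim_\tau(\ML)$ nonempty $\Rightarrow \alpha\ge\Omega(1/\tau)$'' is false, and the heuristic you give for it (``a single step of $\mathbb{A}_K^\alpha$ can move probability mass by at most $O(\alpha)$'') is incorrect: the $(1-\alpha)A$ part can move mass arbitrarily, since $A$ is an arbitrary column-stochastic matrix and may itself be the source of the mixing. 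Concretely, if $A=\frac{1}{d_x}\mathbf{1}\mathbf{1}^\top$ then $\mathbb{A}_K^\alpha$ contracts zero-sum vectors by a factor $\alpha$ per step, so it mixes in one step for every $K$ and every $\alpha\le 1/8$; hence $\Ksim_1(\ML)$ is nonempty with $\alpha$ arbitrarily small (say $\alpha=1/T$). In that regime your head bound $O(d_x^{7/2}H\beta/\alpha)$ is vacuous and your tail bound $(1-\alpha)^{i^\star}\le\beta$ fails for $i^\star=\Theta(\tau\log(1/\beta))$, so Step 3 collapses precisely when $\alpha$ is small.

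The paper closes this hole with a case split on $\tau_A:=t^{\mathrm{mix}}(A)$. It proves only the conditional statement you need: if $\tau_A>4\tau$ then $\alpha>1/(128\tau)$, shown by contradiction via the perturbation facts $\|A-\mathbb{A}_{K^\star}^\alpha\|_{1\to 1}\le 2\alpha$ and $D_A(4\tau)\le 16\tau\alpha+2D_{\mathbb{A}_{K^\star}^\alpha}(4\tau)$ from \citet{golowich2024online}. In the complementary case $\tau_A\le 4\tau$, the tail $i>t_0=\lceil\tau_A\log(2/\beta)\rceil$ is controlled not by the $(1-\alpha)^{i-1}$ factor but by centering $Bu_{t-i}$ and $Bu_{t-i}(M_t;K_0)$ at the stationary distribution $p_A^\star$ of $A$ and using $\|A^{i-1}(v-p_A^\star)\|_1\le 2^{-\lfloor (i-1)/\tau_A\rfloor}$; only when $\tau_A>4\tau$ does the paper run your $(1-\alpha)$-decay argument. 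Your ``alternative route'' via the contraction of $\mathbb{A}_{K_0}^\alpha$ does not rescue the argument either, since $K_0$ is chosen arbitrarily in Line~\ref{line:choose-k0} and need not lie in $\Ksim_\tau(\ML)$. To repair the proof you need this dichotomy (or an equivalent one).
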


\begin{proof} 
Fix $t \in [T]$. First, by \cref{eq:yt-unfold-po}, we have that for all $M\in\MM_{d_x,d_y,H}$, %\noah{instances of $y_t(M)$ should be $y_t(M; K_0)$ (and same for $u_t(M)$)?}
\begin{align}
\label{eq:yt-diff}
y_t-y_t(M;K_0) &= \sum_{i=1}^{t-1}\bar{\lambda}_{t,i}C ((1-\alpha)A)^{i-1}\alpha B(u_{t-i}-u_{t-i}(M;K_0)). 
\end{align}
We consider two cases of the mixing time $\tau_A$ of $A$. Let $\tau_A=t^{\mathrm{mix}}(A)$, and $t_0=\lceil \tau_A\log(2/\beta)\rceil$. 

\paragraph{Case 1: $\tau_A\le 4\tau$.} Let $p_{A}^{\star}$ satisfy $p_{A}^{\star}=Ap_{A}^{\star}$ be a stationary distribution of $A$, then by adding and subtracting $p_{A}^{\star}$, we have $\forall M\in\MM_{d_x,d_y,H}$,
\begin{align*}
&\left\|\sum_{i=t_0+1}^{t-1}\bar{\lambda}_{t,i}[(1-\alpha)A]^{i-1}\alpha B(u_{t-i}-u_{t-i}(M;K_0))\right\|_1\\
&\le \left\|\alpha \sum_{i=t_0+1}^{t-1}\bar{\lambda}_{t,i}[(1-\alpha)A]^{i-1}(Bu_{t-i}-p_{A}^{\star})\right\|_1+ \left\|\alpha\sum_{i=t_0+1}^{t-1}\bar{\lambda}_{t,i}[(1-\alpha)A]^{i-1}(Bu_{t-i}(M;K_0)-p_{A}^{\star})\right\|_1\\
&\le \alpha \sum_{i=t_0+1}^{t-1}\bar{\lambda}_{t,i}(1-\alpha)^{i-1}\|A^{i-1}(Bu_{t-i}-p_{A}^{\star})\|_1+ \alpha\sum_{i=t_0+1}^{t-1}\bar{\lambda}_{t,i}(1-\alpha)^{i-1}\|A^{i-1}(Bu_{t-i}(M;K_0)-p_{A}^{\star})\|_1\\
&\le 2 \sum_{i=t_0+1}^{t-1} \frac{1}{2^{\lfloor (i-1)/\tau_A\rfloor}}\le 2\tau_A\beta\sum_{i=0}^{\infty}\frac{1}{2^i}\le C\tau\beta\log(1/\beta).
\end{align*}
Thus, it suffices to show a bound for the first $t_0$ terms. We have that
\begin{align*}
\|\tilde{u}_{t-i}-\tilde{u}_{t-i}(M_t;K_0)\|_1&\le \|(M_{t-i}^{[0]}-M_{t}^{[0]})y_{t-i}(K_0)\|_1+\sum_{j=1}^{H}\bar{\lambda}_{t-i,j}\|(M_{t-i}^{[j]}-M_{t}^{[j]})y_{t-i-j}(K_0)\|_1\\
&\le (H+1)i\beta\le 2Hi\beta,
\end{align*}
since $y_t\in\Delta^{d_y}$, $\forall t$, and $\max_{0\le j\le H}\|M_{t-i}^{[j]}-M_{t}^{[j]}\|_{1\rightarrow 1}\le i\beta$ by the assumption. To bound the $\|u_{t-i}-u_{t-i}(M_t;K_0)\|_1$, we note that by the non-expanding condition established in Lemma~\ref{thm:proj-lip}, we have that
\begin{align*}
\|u_{t-i}-u_{t-i}(M_t;K_0)\|_1&\le 2\sqrt{d_x}\left\|u_{t-i}-u_{t-i}(M_t;K_0)\right\|_2\\
&\le C d_x^{7/2}\|\tilde{u}_{t-i}-\tilde{u}_{t-i}(M_t;K_0)\|_1\\
&\le Cd_x^{7/2}Hi\beta.
\end{align*}
Thus, we can bound $\|y_t-y_t(M_t;K_0)\|_1$ as the following: 
\begin{align*}
\|y_t-y_t(M_t;K_0)\|_1&\le \left\|\sum_{i=1}^{t_0}\bar{\lambda}_{t,i}C[(1-\alpha)A]^{i-1}\alpha B(u_{t-i}-u_{t-i}(M))\right\|_1+C\tau\beta\log(1/\beta)\\
&\le Cd_x^{7/2}\beta H t_0^2 +C\tau\beta\log(1/\beta)\\
&\le Cd_x^{7/2}\beta \log^2(1/\beta) H\tau^2
\end{align*}
for some constant $C$.

\paragraph{Case 2: $\tau_A>4\tau$.} In this case, we will give a lower bound on $\alpha$. Recall that $\tau=t^{\mathrm{mix}}(\mathbb{A}_{K^{\star}}^{\alpha})$, where  $\mathbb{A}_{K^{\star}}^{\alpha}=(1-\alpha)A+\alpha BK^{\star}C$. Suppose (for the purpose of contradiction) that $\alpha\le 1/(128\tau)$. Note that $\|A-\mathbb{A}_{K^{\star}}^{\alpha}\|_{1\rightarrow 1}=\alpha\| A -  BK^{\star}C\|_{1\rightarrow 1}\le \alpha(\|A\|_{1\rightarrow 1}+\|BK^{\star}C\|_{1\rightarrow 1})=2\alpha$. Here, we use Lemma 18 and Lemma 19 from \citep{golowich2024online}, which state the following facts: for $X\in\mathbb{S}^d$ with a unique stationary distribution $\pi$, then (1) $\forall c, t\in\mathbb{N}$, $D_X(t)\le \bar{D}_X(t)\le 2D_X(t)$ and $\bar{D}_X(ct)\le \bar{D}_X(t)^c$, and (2) if $Y\in\mathbb{S}^d$ satisfies $\|X-Y\|_{1\rightarrow 1}\le \delta$, then $\forall t\in\mathbb{N}$, there holds $D_Y(t)\le 2t\delta+2D_X(t)$, where $D_X(t)=\sup_{p\in\Delta^d}\|X^t p-\pi\|_1$ and $\bar{D}_X(t)=\sup_{p,q\in\Delta^d}\|X^t(p-q)\|_1$. These facts implies that $\bar{D}_{\mathbb{A}_{K^{\star}}^{\alpha}}(\tau)\le 2D_{\mathbb{A}_{K^{\star}}^{\alpha}}(\tau)\le 1/2$, and thus $D_{\mathbb{A}_{K^{\star}}^{\alpha}}(4\tau)\le \bar{D}_{\mathbb{A}_{K^{\star}}^{\alpha}}(4\tau)\le (\bar{D}_{\mathbb{A}_{K^{\star}}^{\alpha}}(\tau))^4\le 1/16$. Since $\|A-\mathbb{A}_{K^{\star}}^{\alpha}\|_{1\rightarrow 1}\le 2\alpha$, $D_A(4\tau)\le 2\cdot 4\tau\cdot 2\alpha+2D_{\mathbb{A}_{K^{\star}}^{\alpha}}(4\tau)\le 16\tau\alpha+1/8\le 1/4$, which means that $t^{\mathrm{mix}}(A)\le 4\tau$. Thus, this means that $\alpha > 1/(128\tau)$.

Thus, we have
\begin{align*}
\|y_t-y_t(M_t;K_0)\|_1&\le \sum_{i=1}^{t-1}(1-\alpha)^{i-1}\|u_{t-i}-u_{t-i}(M_t;K_0)\|_1\\
&\le Cd_x^{7/2}H\beta\sum_{i=1}^{t-1}(1-1/(128\tau))^{i-1}\cdot i\\
&\le Cd_x^{7/2}H\beta \tau^2.\\
\end{align*}
%\dhruv{(a minor error here, need to be a bit less lossy and not throw away the $\beta$ factor above)} 
By Lipschitz condition on $c_t$, we have
\begin{align*}
|c_t(y_t(M_t;K_0),u_t(M_t;K_0))-c_t(y_t,u_t)|&\le L(\|y_t-y_t(M_t;K_0)\|_1)\le C L d_x^{7/2} \beta\log^2(1/\beta)H\tau^2.
\end{align*}
\end{proof}

\subsection{Wrapping up the proof of Theorem~\ref{thm:po-main}}\label{sec:po-proof}

Before proving Theorem~\ref{thm:po-main}, we establish that the loss functions $\ell_t$ used in $\texttt{GPC-PO-Simplex}$ are Lipschitz with respect to $\|\cdot\|_{\Sigma}$ in Lemma~\ref{lem:po-lt-lip}, where  $\|\cdot\|_{\Sigma}$ measures the $\ell_1$-norm of $M=M^{[0:H]}$ as a flattened vector in $\mathbb{R}^{d_xd_y(H+1)}$, formally given by
\begin{align*}
\|M_t\|_{\Sigma}:=\sum_{i=0}^{H}\sum_{j\in[d_x],k\in[d_y]}|(M_t^{[i]})_{jk}|.
\end{align*}

\begin{lemma} [Lipschitzness of $e_t$]
\label{lem:po-lt-lip}
Suppose that a partially observable simplex LDS $\ML$ and $H\ge \tau>0$ are given, and $\Ksim_{\tau}(\ML)$ is nonempty. Fix $K_0\in\mathbb{S}^{d_x,d_y}$. For each $t\in[T]$, the pseudo loss function $e_t$ (as defined on Line~\ref{line:et-loss} of \cref{alg:gpc-po} is $O(Ld_x^{9/2}\tau)$-Lipschitz with respect to the norm $\|\cdot\|_{\Sigma}$ in $\MM_{d_x,d_y,H}$.  %\noah{did we say somewhere we're dropping the subscripts on $\mathcal{M}$?}
\end{lemma}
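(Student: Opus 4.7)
The plan is to write $e_t(M) = c_t^y(y_t(M;K_0)) + g_t(M)$, where the second summand
\[
g_t(M) := \pi_{\mathcal{K}}(\phi(\tilde{u}_t(M;K_0))) \cdot c_t^u\!\left(\phi^{-1}\!\left(\tfrac{\phi(\tilde{u}_t(M;K_0))}{\pi_{\mathcal{K}}(\phi(\tilde{u}_t(M;K_0)))}\right)\right)
\]
is exactly the Minkowski extension (\cref{def:minkowski-extension}) of $f := c_t^u \circ \phi^{-1} : \K \to \mathbb{R}_+$ evaluated at $\phi(\tilde{u}_t(M;K_0))$.  By \cref{asm:po-convex-loss} and \cref{lem:transformation}, $f$ is convex, attains $0$ on $\K$, and is $L_f := 2L$-Lipschitz w.r.t.\ $\|\cdot\|_1$.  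I will Lipschitz-bound the two summands separately in $\|\cdot\|_\Sigma$ and then add.

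For the first summand, apply Lipschitzness of $c_t^y$ and unroll via \cref{eq:yt-unfold-po}, observing that only the $u_{t-i}(M;K_0)$ factors depend on $M$ and that $\|C((1-\alpha)A)^{i-1}\|_{1\to 1} \leq (1-\alpha)^{i-1}$ since $A$ and $C$ are column-stochastic. This yields
\[
\|y_t(M;K_0)-y_t(M';K_0)\|_1 \leq \sum_{i=1}^{t-1}\bar\lambda_{t,i}\alpha(1-\alpha)^{i-1}\|u_{t-i}(M;K_0)-u_{t-i}(M';K_0)\|_1.
\]
For the inner control difference, I chain four ingredients: $\phi^{-1}$ is $2$-non-expanding w.r.t.\ $(\|\cdot\|_1,\|\cdot\|_1)$ (\cref{lem:transformation}); Minkowski projection is $4\kappa^3 = O(d_x^3)$-Lipschitz w.r.t.\ $\|\cdot\|_2$ with $\kappa = 2d_x$ (\cref{thm:proj-lip}, \cref{cor:lip-control}); $\phi$ is $1$-non-expanding in $\|\cdot\|_2$; and $\|\tilde{u}_s(M)-\tilde{u}_s(M')\|_1 \leq \|M-M'\|_\Sigma$, because $\|M^{[i]}-M'^{[i]}\|_{1\to 1} \leq \sum_{j,k}|(M^{[i]}-M'^{[i]})_{jk}|$, $\|y_{s-i}(K_0)\|_1 = 1$, and $\alpha\bar\lambda_{s,i}\leq 1$. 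Inserting the conversion $\|\cdot\|_1 \leq \sqrt{d_x}\|\cdot\|_2$ between $\phi^{-1}$ and the Minkowski projection gives $\|u_s(M;K_0)-u_s(M';K_0)\|_1 = O(d_x^{7/2})\|M-M'\|_\Sigma$, and since $\sum_i\bar\lambda_{t,i}\alpha(1-\alpha)^{i-1}\leq 1$, the first summand is $O(Ld_x^{7/2})$-Lipschitz.

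For the second summand $g_t$, apply \cref{cor:lip-control}(2) to obtain $O(DLd_x^4)$-Lipschitzness w.r.t.\ $\|\cdot\|_2$ on $D\mathbb{B}^{d_x-1}$.  The crucial step for the advertised $d_x^{9/2}$ exponent is to measure $D$ in $\ell_2$ rather than $\ell_1$: for any $y \in \Delta^{d_y}$ every entry of $M^{[i]}y$ lies in $[-1,1]$ so $\|M^{[i]}y\|_2 \leq \sqrt{d_x}$, while $M^{[0]}y \in \Delta^{d_x}$ gives $\|M^{[0]}y\|_2 \leq 1$. Summing $H$ such terms and using $\alpha\bar\lambda_{t,i}\leq 1$, $\|\phi(\tilde{u}_t(M;K_0))\|_2 = O(H\sqrt{d_x})$, so $D = O(H\sqrt{d_x})$ suffices. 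Using $\phi$'s $\ell_2$ non-expansion and the $\|\cdot\|_\Sigma$ bound on $\|\tilde{u}_t(M)-\tilde{u}_t(M')\|_1$ from the previous step, $\|\phi(\tilde{u}_t(M))-\phi(\tilde{u}_t(M'))\|_2 \leq \|M-M'\|_\Sigma$, and the second summand is $O(LHd_x^{9/2})$-Lipschitz. With $H = O(\tau\log T)$, summing the two contributions yields the claimed $\tilde{O}(L\tau d_x^{9/2})$ Lipschitz constant.

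The main obstacle will be this $\ell_2$-aware estimation $D = O(H\sqrt{d_x})$: a naive $\ell_1$-estimate $\|\tilde{u}_t(M;K_0)\|_1 \lesssim Hd_x$ combined with $\|\cdot\|_2\leq \|\cdot\|_1$ would only yield $D = O(Hd_x)$ and a suboptimal $d_x^{5}$ exponent. Careful tracking of norms across the $\phi$, Minkowski projection, and $\phi^{-1}$ composition is likewise essential for the $d_x^{7/2}$ factor in the first piece, and the interplay between the aspherity $\kappa = 2d_x$ and the $\sqrt{d_x}$ coming from $\ell_1$/$\ell_2$ conversions is where all the $d_x$ dependencies accumulate.
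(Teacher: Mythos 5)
Your proof is correct in substance but takes a genuinely different route from the paper's, and the $\tau$ factor enters in a different place. For the $c_t^y$ term, the paper runs a two-case analysis on $\tmix(A)$ (using the mixing of $A$ when $\tmix(A)\le 4\tau$, and a lower bound $\alpha>1/(128\tau)$ otherwise) to control $\sum_i\|C((1-\alpha)A)^{i-1}\alpha B(\cdot)\|_1$, paying a factor $\tau$; you instead simply keep the $\alpha$ attached to $B$ and use $\sum_{i\ge 1}\bar\lambda_{t,i}\,\alpha(1-\alpha)^{i-1}\le 1$, which is both simpler and tighter ($O(Ld_x^{7/2})$ with no $\tau$), and entirely avoids the mixing-time machinery. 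For the $c_t^u$ term, the paper invokes \cref{cor:lip-control} and asserts an $O(Ld_x^4)$ Lipschitz constant for the extension, which implicitly takes the radius $D$ of the region containing $\phi(\tilde u_t(M))$ to be $O(1)$; you correctly observe that over all of $\MM_{d_x,d_y,H}$ one only has $\|M^{[i]}y\|_2\le\sqrt{d_x}$ per term and hence $D=O(H\sqrt{d_x})$, yielding $O(LHd_x^{9/2})$ — your accounting here is the more careful one, and it is where your $\tau$ (via $H$) comes from. The net constants agree up to the $\log T$ hidden in $H$. Two caveats: (i) as literally stated the lemma allows any $H\ge\tau$ and claims $O(Ld_x^{9/2}\tau)$, whereas your argument gives $O(Ld_x^{9/2}H)$; for the $H=\Theta(\tau\log(d_xLT))$ used in \cref{thm:po-main} this costs only a logarithmic factor absorbed by the theorem's $\tilde O$, but it does not establish the lemma for arbitrary $H\gg\tau$; (ii) make sure you state that the image $\{\phi(\tilde u_t(M)):M\in\MM\}$ is convex (it is, since $M\mapsto\tilde u_t(M)$ is affine and $\MM$ is convex), so that Lipschitzness on $D\mathbb{B}^{d_x-1}$ transfers along the segment joining $\phi(\tilde u_t(M))$ and $\phi(\tilde u_t(M'))$.
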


\begin{proof}
Since $e_t$ (Line~\ref{line:et-loss} of \cref{alg:gpc-po}) is a composite of many functions, we analyze the Lipschitz condition of each of its components. First, we show that the $M\mapsto y_t(M;K_0)$ and $M\mapsto \tilde{u}_t(M;K_0)$ are Lipschitz in $M$ on $\MM_{d_x,d_y,H}$.
In particular, fix $M_1,M_2\in\MM_{d_x,d_y,H}$, we will show that 
\begin{align*}
\|y_t(M_1;K_0)-y_t(M_2;K_0)\|_1&\le O(d_x^{7/2}\tau) \|M_1-M_2\|_{\Sigma}, \\ 
\|\tilde{u}_t(M_1;K_0)-\tilde{u}_t(M_2;K_0)\|_1&\le 2\sqrt{d_x} \|M_1-M_2\|_{\Sigma}.
\end{align*}
Expanding $\tilde{u_t}$, we have
\begin{align*}
\|\tilde{u}_t(M_1;K_0)-\tilde{u}_t(M_2;K_0)\|_1&\le \sqrt{d_x}\|\tilde{u}_t(M_1;K_0)-\tilde{u}_t(M_2;K_0)\|_2\\
&\le \sqrt{d_x}\left\|(M_1^{[0]}-M_2^{[0]})y_t(K_0)+\sum_{i=1}^H\bar{\lambda}_{t,i}(M_1^{[i]}-M_2^{[i]})y_{t-i}(K_0)\right\|_1\\
&\le 2\sqrt{d_x} \max_{0\le i\le H} \|M_1^{[i]}-M_2^{[i]}\|_{1\rightarrow 1} \\
&\le 2\sqrt{d_x}\|M_1-M_2\|_{\Sigma},
\end{align*}
where the second inequality follows from the fact that $\ell_2$ norm is bounded by $\ell_1$ norm. By Lemma~\ref{cor:lip-control}, we have
\begin{align*}
\|u_t(M_1;K_0)-u_t(M_2;K_0)\|_1\le O(d_x^{7/2})\|M_1-M_2\|_{\Sigma}.
\end{align*}
Expanding $y_t$, we have that
\begin{align*}
\|y_t(M_1;K_0)-y_t(M_2;K_0)\|_1&=\left\|\sum_{i=1}^{t-1}\bar{\lambda}_{t,i}C[(1-\alpha)A]^{i-1}\alpha B(u_t(M_1)-u_t(M_2))\right\|_1.
\end{align*}
We consider two cases, depending on the mixing time $\tau_A:=\tmix(A)$ of $A$. 

\paragraph{Case 1:} $\tau_A\le 4\tau$. Let the stationary distribution of $A$ be denoted $p^{\star}\in\Delta^{d_x}$. Then by Lemma 22 in \cite{golowich2024online}, $\forall i\in\mathbb{N}$ and $v\in\mathbb{R}^d$ with $\langle \mathbbm{1}, v\rangle=0$, $\|A^{i}v\|_1\le 2^{-\lfloor i/\tau_A\rfloor}\|v\|_1\le 2^{-\lfloor i/4\tau\rfloor}\|v\|_1$. Then,
\begin{align*}
\|y_t(M_1)-y_t(M_2)\|_1&\le \sum_{i=1}^{t-1} \|CA^{i-1}B(u_t(M_1)-u_t(M_2))\|_1\\
&\le \sum_{i=1}^{t-1} 2^{-\lfloor (i-1)/4\tau\rfloor}\|u_t(M_1)-u_t(M_2)\|_1\\
&\le Cd_x^{7/2}\|M_1-M_2\|_{\Sigma}\sum_{i=1}^{t-1} 2^{1-\lfloor (i-1)/4\tau\rfloor}\\
&\le Cd_x^{7/2}\tau \|M_1-M_2\|_{\Sigma}. 
\end{align*}

\paragraph{Case 2:} $\tau_A>4\tau$. As shown in the proof of Lemma~\ref{lem:po-mem-mismatch}, in this case we have $\alpha > 1/(128\tau)$, and therefore
\begin{align*}
\|y_t(M_1)-y_t(M_2)\|_1&\le\sum_{i=1}^{t-1}(1-\alpha)^{i-1}\|(u_t(M_1)-u_t(M_2))\|_1\\
&\le Cd_x^{7/2}\|M_1-M_2\|_{\Sigma}\sum_{i=1}^{t-1}(1-1/(128\tau))^{i-1}\\
&\le C d_x^{7/2}\tau\|M_1-M_2\|_{\Sigma}.
\end{align*}
Here we proved that $M\mapsto y_t(M;K_0)$ is $O(d_x^{7/2}\tau)$-Lipschitz, and $M\mapsto \tilde{u}_t(M;K_0)$ is $2\sqrt{d_x}$-Lipschitz. Immediately, $c_t^y(y_t(M;K_0))$ is $O(Ld_x^{7/2}\tau)$-Lipschitz on $\MM_{d_x,d_y,H}$. It suffices to show that
\begin{align*}
\pi_{\mathcal{K}}(\phi(\tilde{u}_t(M;K_0))) \cdot c_t^u(\phi^{-1}(\frac{\phi(\tilde{u}_t(M;K_0))}{\pi_{\mathcal{K}}(\phi(\tilde{u}_t(M;K_0))}))
\end{align*}
is Lipschitz on $\MM_{d_x,d_y,H}$. Since $\phi$ is $1$-Lipschitz over $\mathbb{R}^{d_y+d_x}$, we have that $M\mapsto \phi(\tilde{u}_t(M;K_0))$ is $2\sqrt{d_x}$-Lipschitz. By assumption $c_t^{u}$ is $L$-Lipschitz on $\Delta^{d_x}$, and $\phi^{-1}$ is $2$-Lipschitz over $\mathcal{K}$, we have $c_t^{u}\circ\phi^{-1}$ is $O(L)$-Lipschitz on $\mathcal{K}$. 

By Lemma~\ref{cor:lip-control}, 
\begin{align*}
u\mapsto \pi_{\mathcal{K}}(u)\cdot \left(c_t^u\circ\phi^{-1} \left(\frac{u}{\pi_{\mathcal{K}}(u)}\right)\right)
\end{align*}
is $O(Ld_x^{4})$-Lipschitz over $\mathcal{K}$, making $c_t^u(u_t(M;K_0))$ $O(Ld_x^{9/2})$-Lipschitz on $\MM_{d_x,d_y,H}$.  . Combining, we have that $e_t$ is $O(Ld_x^{9/2}\tau)$-Lipschitz on $\MM_{d_x,d_y,H}$.  
\end{proof}

\begin{proof} [Proof of \cref{thm:po-main}]
We are ready to prove \cref{thm:po-main}. It is crucial to check the condition in Lemma~\ref{lem:po-mem-mismatch}. Define
\begin{align*}
\|M_t\|_{\star}:=\max_{0\le i\le H}\|M_{t}^{[i]}\|_{1\rightarrow 1}.
\end{align*}
It is easy to check that $\|M\|_{\Sigma}\ge \|M\|_{\star}$. With slight abuse of notation, let $\|M\|_2$ denote the $\ell_2$-norm of the flattened vector in $\mathbb{R}^{d_xd_y(H+1)}$ for $M\in\MM_{d_x,d_y,H}$. Let $\|\cdot\|_{\Sigma}^*$ denote the dual norm of $\|\cdot\|_{\Sigma}$. Lemma~\ref{lem:po-lt-lip} implies that
\cref{alg:gpc-po} guarantees
\begin{align*}
& \quad \|M_{t}-M_{t+1}\|_{\star}\le\|M_{t}-M_{t+1}\|_{\Sigma}\le_{(1)} \sqrt{d_xd_yH}\|M_{t}-M_{t+1}\|_2\le_{(2)} \eta \sqrt{d_xd_yH} \|\partial e_t(M_t)\|_{2}\\
&\le_{(3)}\eta \sqrt{d_xd_yH}\|\partial e_t(M_t)\|_{\Sigma}\le_{(4)} \eta (d_xd_yH)^{3/2} \|\partial e_t(M_t)\|_{\Sigma}^{*}\le_{(5)} \eta(d_xd_yH)^{3/2}Ld_x^{9/2}\tau\\
&\le \eta H^{5/2}Ld_x^{15/2}, 
\end{align*}
for some constant $C$, where $(1)$ and $(3)$ follow from $\ell_1$-norm-$\ell_2$-norm inequality that $\forall v\in\mathbb{R}^n$ $\|v\|_2\le \|v\|_1\le\sqrt{n}\|v\|_2$; $(2)$ follows from the update in Line~\ref{line:po-update} of \cref{alg:gpc-po};
$(4)$ follows from that $\|M\|_{\Sigma}\le d_xd_yH\|M\|_{\Sigma}^*$, and $(5)$ follows from Lemma~\ref{lem:po-lt-lip}, the assumption that $d_x\ge d_y$, and that $H\ge \tau$. 

Moreover, by the standard regret bound of OGD (Theorem 3.1 in \cite{hazan2016introduction}), we have
\begin{align*}
\sum_{t=1}^T e_t(M_t)-\min_{M\in\mathcal{M}}\sum_{t=1}^T e_t(M)\le O\left(\frac{D^2}{\eta}+(Ld_x^{9/2}\tau)^2\eta T\right), 
\end{align*}
where $D$ denotes the diameter of $\MM_{d_x,d_y,H}$ with respect to $\|\cdot\|_{\Sigma}$, which is bounded in this case by 
\begin{align*}
D=\sup_{M\in\mathcal{M}}\left\{ \sum_{i=0}^H\sum_{j\in[d_x],k\in[d_y]}|M^{[i]}_{jk}|\right\}\le 3d_yH.
\end{align*}
Take $\eta=1/(LH^{2}d_x^{4.5}\sqrt{T})$ and recall that $d_y\le d_x$, then we have
\begin{align*}
& \quad \sum_{t=1}^T c_t(y_t,u_t)-\min_{K^{\star}\in \Ksim_\tau(\ML)}\sum_{t=1}^T c_t(y_t(K^{\star}),u_t(K^{\star}))\\
&\le \sum_{t=1}^T c_t(y_t,u_t)-\sum_{t=1}^T c_t(y_t(M_t;K_0),u_t(M_t;K_0))+\sum_{t=1}^T c_t(y_t(M_t;K_0),u_t(M_t;K_0))- \sum_{t=1}^Te_t(M_t)\\
& \quad + \sum_{t=1}^Te_t(M_t)-\min_{M\in\mathcal{M}}\sum_{t=1}^T e_t(M)+1\\
&\le \sum_{t=1}^T e_t(M_t)-\min_{M\in\mathcal{M}}\sum_{t=1}^T e_t(M)+\tilde{O}\left(\eta H^{11/2}L^2d_x^{11} T\right)\\
&\le \tilde{O}\left(\frac{d_x^2H^2}{\eta}+(Ld_x^{9/2}\tau)^2\eta T+\eta H^{11/2}L^2d_x^{11} T\right)\\
&\le \tilde{O}\left(L\tau^{4}d_x^{6.5}\sqrt{T}\right),
\end{align*}
where the first inequality follows from Lemma~\ref{lem:po-approx} by taking $\eps=1$; the second inequality follows from Lemma~\ref{lem:po-mem-mismatch} by taking $\beta=\eta H^{5/2}Ld_x^{15/2}$ and since $c_t(y_t(M_t;K_0),u_t(M_t;K_0))\le e_t(M_t)$ from Observation~\ref{obs:ext}, and the third inequality follows from OGD regret guarantee. 
\end{proof}

\newpage
\section{Algorithm for non-Markov policies}
\begin{algorithm}[H]
\caption{$\NMGPCPOS$}
\label{alg:gpc-po-nonmarkov}
\begin{algorithmic}[1]
\REQUIRE Transition matrices $A,B \in \BS^{d_x}$, observation matrix $C \in \BS^{d_y,d_x}$, horizon parameter $H \in \BN$, control parameter $\alpha\in[0,1]$, step size $\eta>0$.  Let $\K$ be defined as in \cref{eq:pair-constrained-set}, and $\pi$ be the Minkowski functional associated with it. Let the transformation map $\phi$ be defined as in \cref{eq:transformation-map}. Denote as $\phi^{-1}$ the inverse of $\phi$ when restricted to $\Delta^{d_x}$.

\STATE Initialize $M_1^{[0:H]}\in \MM := \MM_{d_x,d_y,H}^{+}$ (Definition~\ref{def:learning-class-ldc}). Choose any $K_0\in\mathbb{S}^{d_x,d_y}$. \label{line:nonmarkovchoose-k0}
\STATE Observe initial observation $y_1$.
\FOR{$t=1,\cdots,T$}
    \STATE Compute $o_t := y_t-\sum_{i=1}^{t-1}\bar{\lambda}_{t,i}C[(1-\alpha)A]^{i-1}\alpha B(u_{t-i}-K_0 o_{t-i})$. \label{line:nonmarkovcompute-signal}
    \STATE Let $\tilde{u}_t=M_t(o_{t:t-H})=M_{t}^{[0]} \begin{bmatrix}
o_t\\
\frac{1}{d_x}\mathbf{1}_{d_x}
\end{bmatrix}+\sum_{i=1}^H \bar{\lambda}_{t,i}M_{t}^{[i]} \begin{bmatrix}
o_{t-i}\\
\frac{1}{d_x}\mathbf{1}_{d_x}
\end{bmatrix}$.
    \STATE Choose control $u_t=\phi^{-1}(\phi(\tilde{u}_t)/\pi(\phi(\tilde{u}_t))$.\label{line:nonmarkovcontrol-po}
    \STATE Receive cost $c_t(y_t,u_t)=c_t^y(y_t)+c_t^u(u_t)$, and observe $y_{t+1}$ and $\gamma_t$.
    \STATE Define pseudo loss function 
\begin{align*}
e_t(M) := c_t^y(y_t(M; K_0))+ \pi(\phi(\tilde{u}_t(M;K_0))) \cdot c_t^u(\phi^{-1}(\frac{\phi(\tilde{u}_t(M;K_0))}{\pi(\phi(\tilde{u}_t(M;K_0))})).
\end{align*} \label{line:nonmarkovet-loss}
    \STATE Set $M_{t+1} := \Pi_{\mathcal{M}}^{\|\cdot\|_2}\left[M_t-\eta \cdot \partial e_t(M_t)\right]$, where $\partial e_t(M_t)$ is the subgradient of $e_t$ evaluated at $M_t$. \label{line:nonmarkovpo-update}
\ENDFOR
\end{algorithmic}
\end{algorithm}

\newpage

\section{Proof of Theorem~\ref{thm:po-main-ldc-simplex}}

\subsection{Approximation of simplex-LDC}
\label{sec:approx-simplex-ldc}
In this section, we prove an approximation theorem of simplex-LDC that is analogous to Lemma~\ref{lem:po-approx}. In particular, we show that there exists a convex parametrization that approximates the class of $\tau$-mixing simplex-LDC well. We begin with Observation~\ref{obs:ldc-simplex-evolution}, which describes the system evolution following any simplex-LDC $\pi$.

\begin{observation}[System evolution using LDC-simplex control]
\label{obs:ldc-simplex-evolution}
Given $d_y,d_x,d_x\in\mathbb{N}$, $\tau>0$, a partially observable simplex LDS $\mathcal{L}=(A, B, C, x_1, (\gamma_t)_{t\in\mathbb{N}}, (w_t)_{t\in\BN},(c_t)_{t\in\BN}, \alpha)$, and a LDC-simplex $\pi=(A_{\pi}, B_{\pi}, C_{\pi}, D_{\pi})\in\Pi_{\tau, d_x}(\ML)$, the evolution of the states and observations when exerting controls according to $\pi$ can be described as the follows: 
\begin{align*}
\begin{bmatrix}
x_{t+1}(\pi) \\
s_{t+1}(\pi)
\end{bmatrix}
&=(1-\gamma_t)A_{\pi,\mathrm{cl}}\begin{bmatrix}
x_t(\pi)\\
s_t(\pi)
\end{bmatrix}+\gamma_t \begin{bmatrix}
w_t\\
\frac{1}{d_x}\mathbf{1}_{d_x}
\end{bmatrix}, \\
\begin{bmatrix}
y_{t}(\pi) \\
u_{t}(\pi)
\end{bmatrix}
&=\begin{bmatrix}
C & 0\\
D_{\pi}C & C_{\pi}
\end{bmatrix}\begin{bmatrix}
x_t(\pi)\\
s_t(\pi)
\end{bmatrix},
\end{align*}
where $A_{\pi,\mathrm{cl}}$ is given by \cref{eq:ldc-transition-matrix}.
\end{observation}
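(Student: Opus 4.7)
The proposed statement is an algebraic restatement of the coupled dynamics of the underlying LDS and the internal state of the simplex-LDC, so my plan is to verify each block of the claim by direct substitution of the relevant definitions, without invoking any further machinery.

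First I would handle the output block. Since $y_t(\pi) = C x_t(\pi)$ by Definition~\ref{def:po-simplex-lds} and $u_t(\pi) = C_\pi s_t(\pi) + D_\pi y_t(\pi)$ by Definition~\ref{def:po-comparator-ldc}, substituting $y_t(\pi) = C x_t(\pi)$ into $u_t(\pi)$ yields $u_t(\pi) = D_\pi C x_t(\pi) + C_\pi s_t(\pi)$, and stacking the two relations produces the output matrix $\begin{bmatrix}C & 0 \\ D_\pi C & C_\pi\end{bmatrix}$ exactly as claimed. Next I would plug this expression for $u_t(\pi)$ into the LDS state update, obtaining
\begin{align*}
x_{t+1}(\pi) &= (1-\gamma_t)\bigl[(1-\alpha)A x_t(\pi) + \alpha B(D_\pi C x_t(\pi) + C_\pi s_t(\pi))\bigr] + \gamma_t w_t \\
&= (1-\gamma_t)\bigl[((1-\alpha)A + \alpha B D_\pi C) x_t(\pi) + \alpha B C_\pi s_t(\pi)\bigr] + \gamma_t w_t,
\end{align*}
which matches the top row of $A_{\pi,\mathrm{cl}}$ in \cref{eq:ldc-transition-matrix}. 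Finally, the internal-state update $s_{t+1}(\pi)$ from Definition~\ref{def:po-comparator-ldc} is already affine in $s_t(\pi)$ and $y_t(\pi)$; replacing $y_t(\pi)$ by $C x_t(\pi)$ turns it into a linear combination of $x_t(\pi)$ and $s_t(\pi)$ plus the $\gamma_t \cdot \frac{1}{d_x}\mathbf{1}_{d_x}$ term, which matches the bottom row of $A_{\pi,\mathrm{cl}}$ once the $(1-\alpha)$ and $\alpha$ scalings are absorbed into $A_\pi$ and $B_\pi$ per the convention fixed in the definition.

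There is no real technical obstacle, as the observation is purely a bookkeeping exercise; the only thing one must be careful about is tracking how the $(1-\alpha), \alpha$ coefficients from the state and internal-state recursions line up with the block entries of $A_{\pi,\mathrm{cl}}$. The point of writing the dynamics in this joint-state form is that it lets the $\tau$-mixing assumption on $A_{\pi,\mathrm{cl}}$ from Definition~\ref{def:mixing-simplex-ldc} be applied directly to the closed-loop trajectory $(x_t(\pi), s_t(\pi)) \in \Delta^{d_x}\times\Delta^{d_x}$. This is exactly what is needed downstream, both for the comparator-approximation step (which will mimic the Markov argument of \cref{lem:po-approx} using the matrix identity $X^n = Y^n + \sum_{i=1}^n X^{i-1}(X-Y) Y^{n-i}$ with $X = A_{\pi,\mathrm{cl}}$) and for the memory-mismatch bound on the perturbed joint system used in proving \cref{thm:po-main-ldc-simplex}.
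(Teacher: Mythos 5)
Your proof is correct and is exactly the direct-substitution verification the paper intends (the paper states this as an Observation with no written proof, so your bookkeeping argument is the whole content). You are also right to flag the $\alpha$/$(1-\alpha)$ issue: as literally written, Definition~\ref{def:po-comparator-ldc} gives bottom blocks $\alpha B_{\pi}C$ and $(1-\alpha)A_{\pi}$ in the joint transition, whereas \cref{eq:ldc-transition-matrix} displays $B_{\pi}C$ and $A_{\pi}$ --- this is a notational inconsistency in the paper itself, and your resolution of absorbing the scalings into $B_{\pi}$ and $A_{\pi}$ is the right way to reconcile the two.
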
 

Based on Observation~\ref{obs:ldc-simplex-evolution}, we can unfold the controls and observations by a simplex-LDC $\pi$.

\begin{lemma}[System unfolding using simplex-LDC control]
\label{lem:ldc-simplex-unfolding} 
Consider a partially observable simplex LDS 
$$\ML=(A, B, C, x_1, (\gamma_t)_{t\in\mathbb{N}}, (w_t)_{t\in\BN},(c_t)_{t\in\BN}, \alpha)$$
with state and control dimension $d_x$ and observation dimension $d_y$. For a LDC-simplex $\pi\in\Pi_{\tau,d_x}(\ML)$ of appropriate dimension, the controls and observations $u_t(\pi)$, $y_t(\pi)$ according to $\pi$ unfold as the following expressions:
\begin{align}
\label{eq:control-unfolding}
u_t(\pi)&=\sum_{i=1}^t \lambda_{t,i}C_{\pi,\mathrm{cl},u}A_{\pi,\mathrm{cl}}^{i-1}\begin{bmatrix}
w_{t-i}\\
\frac{1}{d_x}\mathbf{1}_{d_x}
\end{bmatrix},
\end{align}
where we write $w_0=x_1$, $\xi_0=s_1$ for convention and $\Delta_{t,i}, C_{\pi,\mathrm{cl},u}$ are given by
\begin{align*}
C_{\pi,\mathrm{cl},u} =[D_{\pi}C \quad  C_{\pi}]\in\mathbb{R}^{d_x\times(d_x+d_x)},
\end{align*}
and $\lambda_{t,i}$ is defined as in \cref{eq:define-lambdas}.
\end{lemma}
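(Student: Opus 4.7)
The plan is to derive the closed-form expression for $u_t(\pi)$ by first unfolding the joint state $[x_t(\pi); s_t(\pi)]$ recursion given by Observation~\ref{obs:ldc-simplex-evolution}, and then applying the linear output map to extract $u_t(\pi)$. This closely parallels the unfolding carried out in Lemma~\ref{lemma:signal-realizability}, with the twist that we now work with an augmented state of dimension $2d_x$ whose ``disturbance input'' at time $t$ is the augmented vector $[w_t; \frac{1}{d_x}\mathbf{1}_{d_x}]$, reflecting the LDC-simplex prior built into the definition of $A_{\pi,\mathrm{cl}}$.

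The first step is to prove, by induction on $t$, the augmented identity
\begin{align*}
\begin{bmatrix} x_t(\pi) \\ s_t(\pi) \end{bmatrix}
= \sum_{i=1}^t \lambda_{t,i}\, A_{\pi,\mathrm{cl}}^{i-1} \begin{bmatrix} w_{t-i} \\ \tfrac{1}{d_x}\mathbf{1}_{d_x} \end{bmatrix}.
\end{align*}
For the base case $t=1$, the right-hand side equals $\lambda_{1,1}[w_0; \tfrac{1}{d_x}\mathbf{1}_{d_x}] = [x_1; s_1]$, using the conventions $w_0=x_1$ and $s_1 = \tfrac{1}{d_x}\mathbf{1}_{d_x}$ from Definition~\ref{def:po-comparator-ldc}. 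For the inductive step, apply Observation~\ref{obs:ldc-simplex-evolution}, substitute the inductive hypothesis, pull $A_{\pi,\mathrm{cl}}$ into each summand to raise the exponent by one, and then re-index. The identities $(1-\gamma_t)\lambda_{t,i} = \lambda_{t+1,i+1}$ and $\gamma_t = \lambda_{t+1,1}$ (which follow directly from \cref{eq:define-lambdas}, analogous to the manipulation used in the proof of Lemma~\ref{lemma:signal-realizability} and \cref{eq:lambda-property}) combine the summation range $i\in\{1,\dots,t\}$ with the isolated $\gamma_t$ term into a single sum over $j\in\{1,\dots,t+1\}$.

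The second step is to read off $u_t(\pi)$ from the output equation. Observation~\ref{obs:ldc-simplex-evolution} gives
\begin{align*}
\begin{bmatrix} y_t(\pi) \\ u_t(\pi) \end{bmatrix}
= \begin{bmatrix} C & 0 \\ D_\pi C & C_\pi \end{bmatrix} \begin{bmatrix} x_t(\pi) \\ s_t(\pi) \end{bmatrix},
\end{align*}
and the bottom block row is precisely $C_{\pi,\mathrm{cl},u} = [D_\pi C\ \ C_\pi]$. Applying this linear map to the closed-form expression for $[x_t(\pi); s_t(\pi)]$ derived above, and using linearity to push $C_{\pi,\mathrm{cl},u}$ inside the summation, yields exactly \cref{eq:control-unfolding}.

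I expect the proof to be essentially mechanical: the main obstacle is bookkeeping around the $\lambda_{t,i}$ reindexing and making sure the initial conditions $w_0 = x_1$ and $s_1 = \tfrac{1}{d_x}\mathbf{1}_{d_x}$ are invoked cleanly so that the single sum starting at $i=1$ correctly absorbs the initial-state contribution. Since this is structurally identical to the unfolding already done in Lemma~\ref{lemma:signal-realizability} (only with a $2d_x$-dimensional state and an augmented disturbance), no new ideas are needed beyond the linearity of the recursion and the $\lambda$-product identity.
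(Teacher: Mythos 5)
Your proposal is correct and follows essentially the same route as the paper: the paper likewise unrolls the augmented recursion from Observation~\ref{obs:ldc-simplex-evolution} to obtain $[x_t(\pi);s_t(\pi)]=\sum_{i=1}^t\lambda_{t,i}A_{\pi,\mathrm{cl}}^{i-1}[w_{t-i};\tfrac{1}{d_x}\mathbf{1}_{d_x}]$ (using the same $\lambda$-identities as in Lemma~\ref{lemma:signal-realizability}) and then applies the bottom block row of the output map. The only cosmetic difference is that you phrase the unrolling as an explicit induction while the paper presents it as an iterated substitution.
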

\begin{proof}
By the first equality in Observation~\ref{obs:ldc-simplex-evolution}, we can unroll $(x_t,s_t)$ as the following:
\begin{align*}
\begin{bmatrix}
x_{t}(\pi) \\
s_{t}(\pi)
\end{bmatrix}
&=(1-\gamma_{t-1})A_{\pi,\mathrm{cl}}\begin{bmatrix}
x_{t-1}(\pi)\\
s_{t-1}(\pi)
\end{bmatrix}+\gamma_{t-1} \begin{bmatrix}
w_{t-1}\\
\frac{1}{d_x}\mathbf{1}_{d_x}
\end{bmatrix}\\
&=(1-\gamma_{t-1})(1-\gamma_{t-2})A_{\pi,\mathrm{cl}}^2\begin{bmatrix}
x_{t-2}(\pi)\\
s_{t-2}(\pi)
\end{bmatrix}+(1-\gamma_{t-1})\gamma_{t-2} A_{\pi, \mathrm{cl}}\begin{bmatrix}
w_{t-2}\\
\frac{1}{d_x}\mathbf{1}_{d_x}
\end{bmatrix}+\gamma_{t-1} \begin{bmatrix}
w_{t-1}\\
\frac{1}{d_x}\mathbf{1}_{d_x}
\end{bmatrix}\\
&=\sum_{i=1}^{t} \lambda_{t,i}A_{\pi,\mathrm{cl}}^{i-1}\begin{bmatrix}
w_{t-i}\\
\frac{1}{d_x}\mathbf{1}_{d_x}
\end{bmatrix},
\end{align*}
where $\lambda_{t,i}$ is defined as in \cref{eq:define-lambdas}, $w_0=x_1$. The last step follows by iteratively applying unfolding of $(x_t(\pi), s_t(\pi))$. Then by the second equality in Observation~\ref{obs:ldc-simplex-evolution}, we prove the inequality in Lemma~\ref{lem:ldc-simplex-unfolding}. 
\end{proof}

We consider the same learning policy class and policy as defined in Definition~\ref{def:learning-class} and Definition~\ref{def:learning-policy} and prove an analogous lemma to Lemma~\ref{lem:po-approx}, which states that the learning policy class in Definition~\ref{def:learning-policy} well approximates the class of LDC-simplex with finite mixing time $\Pi_{\tau, d_x}(\ML)$ in Definition~\ref{def:po-comparator-ldc}.

\begin{lemma} [Approximation of LDC-simplex, analogous to Lemma~\ref{lem:po-approx}]
\label{lem:po-approx-ldc}
Consider a partially observable LDS $\ML$ with state and control dimension $d_x$, observation dimension $d_y$. Suppose that the cost functions $c_1,\dots,c_T$ of $\ML$ satisfy \cref{asm:po-convex-loss} with Lipschitz parameter $L>0$. Fix $\tau, \eps>0$ and $d_x\in\BN$. Suppose $H$ satisfies $H\ge 2\tau\lceil \log\frac{4096\tau d_x^{7/2} L T^2}{\epsilon}\rceil$. Then, for any $\pi=(A_{\pi},B_{\pi},C_{\pi},D_{\pi})\in\Pi_{\tau, d_x}(\ML)$, there is some $M\in\MM_{d_x,d_y,d_x,H}$ such that 
\begin{align*}
\left|\sum_{t=1}^T e_t(M)-\sum_{t=1}^T c_t(y_t(\pi), u_t(\pi))\right|\le \eps.  
\end{align*}
\end{lemma}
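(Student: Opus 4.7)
I would mirror the structure of the proof of Lemma~\ref{lem:po-approx}, splitting the total error into a cost-approximation piece $\bigl|\sum_t c_t(y_t(M;K_0),u_t(M;K_0)) - \sum_t c_t(y_t(\pi),u_t(\pi))\bigr|$ and an extension-mismatch piece $\bigl|\sum_t e_t(M) - \sum_t c_t(y_t(M;K_0),u_t(M;K_0))\bigr|$, each bounded by $\vep/2$. The extension-mismatch half is essentially unchanged from the Markov setting: it only invokes Lemmas~\ref{lem:extension-property}, \ref{cor:lip-control}, and \ref{lem:transformation} together with the same $\phi^{-1}\circ(\cdot/\pi_{\K}(\cdot))$ manipulation, and it accepts the new cost-approximation bound as a black-box input to the $\pi_{\K}$-Lipschitz chain.

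\textbf{Cost approximation via a telescoping identity.} The heart of the argument is to construct $M\in\MM_{d_x,d_y,H}^{+}$ such that the infinite-history control $\hat u_t(M;K_0) := M^{[0]}[o_t;\mathbf{1}_{d_x}/d_x] + \alpha\sum_{i\ge 1}\bar\lambda_{t,i}M^{[i]}[o_{t-i};\mathbf{1}_{d_x}/d_x]$ coincides with $u_t(\pi)$ exactly. Using Lemma~\ref{lem:ldc-simplex-unfolding}, $u_t(\pi)=\sum_{i=1}^t\lambda_{t,i}C_{\pi,\mathrm{cl},u}A_{\pi,\mathrm{cl}}^{i-1}[w_{t-i};\mathbf{1}_{d_x}/d_x]$, while Observation~\ref{fact:linear-unfold} expresses $o_{t-i}=y_{t-i}(K_0)=\sum_{j}\lambda_{t-i,j}C(\mathbb{A}_{K_0}^{\alpha})^{j-1}w_{t-i-j}$. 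The natural baseline is the block-diagonal matrix $Y := \mathrm{diag}(\mathbb{A}_{K_0}^{\alpha},I_{d_x})$, since running the default linear policy $u=K_0 y$ drives the true state by $\mathbb{A}_{K_0}^{\alpha}$ while leaving a decoupled inner state fixed. Applying the matrix identity $X^n=Y^n+\sum_{i=1}^n X^{i-1}(X-Y)Y^{n-i}$ with $X=A_{\pi,\mathrm{cl}}$, together with the reindexing identity $\bar\lambda_{t,i}\lambda_{t-i,j-i}=\lambda_{t,j}$ from \cref{eq:lambda-property} (exactly as in the Markov proof), lets me read off the blocks of $M^{[i]}$ from $C_{\pi,\mathrm{cl},u}A_{\pi,\mathrm{cl}}^{i-1}(A_{\pi,\mathrm{cl}}-Y)$ post-multiplied by the appropriate block-selection matrices. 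A short verification using column-stochasticity of $A_{\pi,\mathrm{cl}}$ and $Y$ (so columns of $A_{\pi,\mathrm{cl}}-Y$ sum to zero) confirms $M^{[i]}\in\BS_0^{d_x,d_y}\otimes\BS_0^{d_x,d_x}$ for $i\ge 1$ and $M^{[0]}\in\BS^{d_x,d_y}\otimes\BS^{d_x,d_x}$.

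\textbf{Truncation, projection, and propagation.} Once the exact representation is in place, the truncation error $\|\hat u_t(M;K_0)-\tilde u_t(M;K_0)\|_1$ reduces to the tail $\sum_{i=H+1}^{t-1}\bar\lambda_{t,i}\|M^{[i]}[o_{t-i};\mathbf{1}_{d_x}/d_x]\|_1$. Adding and subtracting the unique stationary distribution of $A_{\pi,\mathrm{cl}}$ (guaranteed by Definition~\ref{def:mixing-simplex-ldc}) turns each summand into $\|A_{\pi,\mathrm{cl}}^{i-1}(v-\pi^\st)\|_1$ for $v\in\Delta^{2d_x}$, at which point Lemma~18 of \cite{golowich2024online} bounds it by $2^{-(i-1)/\tau}$. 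The stated choice $H\ge 2\tau\lceil\log(4096\tau d_x^{7/2}LT^2/\vep)\rceil$ then drives the total below $\vep/(512 d_x^{7/2}LT^2)$. From there, Lemma~\ref{thm:proj-lip} with $\kappa=2d_x$ combined with Lemma~\ref{lem:transformation} converts this $\tilde u$-bound into $\|u_t(M;K_0)-u_t(\pi)\|_1\le \vep/(8LT^2)$, which propagates to $\|y_t(M;K_0)-y_t(\pi)\|_1\le \vep/(8LT)$ via \cref{eq:yt-unfold-po-2}; Lipschitzness of $c_t$ then gives the desired $\vep/2$ on cost approximation.

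\textbf{Main obstacle.} The main hurdle, compared to the Markov case, is the extra block-matrix bookkeeping: verifying that the $M^{[i]}$ produced by the telescoping identity indeed land in the required column-sum structure of $\MM_{d_x,d_y,H}^{+}$, and carefully tracking how the appended uniform-prior coordinate $\mathbf{1}_{d_x}/d_x$ interacts with the baseline $Y$ so that the $Y^{n-1}$ contribution cancels exactly rather than leaking into the tail. A secondary subtlety is that the vectors $[o_{t-i};\mathbf{1}_{d_x}/d_x]$ formally lie in $\Delta^{d_y}\times\Delta^{d_x}$ (not in $\Delta^{d_y+d_x}$), so the stationary-subtraction argument must be applied coordinate-block-wise after suitable normalization; this is where the extra $d_x$-factor relative to the Markov approximation enters, matching the larger $d_x^{8}$ dependence advertised in Theorem~\ref{thm:po-main-ldc-simplex}.
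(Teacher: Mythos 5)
Your proposal follows essentially the same route as the paper's proof: the same two-part decomposition, the same construction of $M^{[i]}$ from $C_{\pi,\mathrm{cl},u}A_{\pi,\mathrm{cl}}^{i-1}(A_{\pi,\mathrm{cl}}-Y)$ with baseline $Y=\mathrm{diag}(\mathbb{A}_{K_0}^{\alpha},I_{d_x})$ via the telescoping matrix identity, the same stationary-subtraction truncation bound using the mixing of $A_{\pi,\mathrm{cl}}$ (the paper handles the block-structure subtlety by splitting $P_{\pi,\pi_0}[v_y;v_x]$ into differences of vectors in $\Delta^{d_x}\times\Delta^{d_x}$), and the same Minkowski-projection and extension-mismatch steps. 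One minor inaccuracy: the extra $d_x$ factors in Theorem~\ref{thm:po-main-ldc-simplex} come from the memory-mismatch and Lipschitzness lemmas (via the weaker lower bound $\alpha\ge 1/(256 d_x\tau)$), not from this approximation lemma, whose final bounds match the Markov case.
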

\begin{proof}
Fix a policy $\pi=(A_{\pi}, B_{\pi}, C_{\pi}, D_{\pi})\in\Pi_{\tau,d_x}(\ML)$. Note that for any fixed $K_0\in\mathcal{K}_{\tau}^{\Delta}(\ML)$, the LDC-simplex policy parameterized by $\pi_0=((1-\alpha)I, \alpha I, 0, K_0)$ satisfies $\pi_0\in \Pi_{\tau, d_x}(\ML)$. 
We show the following two inequalities separately.
\begin{align}
\left|\sum_{t=1}^T c_t(y_t(M;\pi_0),u_t(M;\pi_0))-\sum_{t=1}^T c_t(y_t(\pi), u_t(\pi))\right|\le \frac{\eps}{4} \label{eq:cost-approx-ldc},\\
\left|\sum_{t=1}^T e_t(M)-\sum_{t=1}^T c_t(y_t(M;\pi_0),u_t(M;\pi_0))\right|\le \frac{3\eps}{4} \label{eq:ext-approx-ldc}.
\end{align}
\paragraph{Proof of \cref{eq:cost-approx-ldc}.} 
We start with \cref{eq:cost-approx-ldc}. For each $M$, recall that $\tilde{u}_t(M;\pi_0)$ is the (un-projected but truncated) control parameterized by $M$ using signals generated by the LDC-simplex policy $\pi_0$ before the Minkowski projection. 

Formally, we choose the following parameterization
\begin{align*}
\hat{u}_t(M;\pi_0):=M^{[0]}\begin{bmatrix}
    y_t(\pi_0) \\
    \frac{1}{d_x}\mathbf{1}_{d_x}
\end{bmatrix} +\sum_{i=1}^t\bar{\lambda}_{t,i}M^{[i]}\begin{bmatrix}
    y_{t-i}(\pi_0) \\
    \frac{1}{d_x}\mathbf{1}_{d_x}
\end{bmatrix} ,
\end{align*}
and
\begin{align}
\label{eq:truncated-unprojected-control}
\tilde{u}_t(M;\pi_0):=M^{[0]}\begin{bmatrix}
    y_t(\pi_0) \\
    \frac{1}{d_x}\mathbf{1}_{d_x}
\end{bmatrix} +\sum_{i=1}^H\bar{\lambda}_{t,i}M^{[i]}\begin{bmatrix}
    y_{t-i}(\pi_0) \\
    \frac{1}{d_x}\mathbf{1}_{d_x}
\end{bmatrix} ,
\end{align}
and $u_t(M;\pi_0)$ is obtained by the Minkowski projection of the tuple $(y_t(M;\pi_0), \tilde{u}_t(M;\pi_0))$. We further divide the proof of \cref{eq:cost-approx-ldc} into three steps: (1) we show the un-projected un-truncated control $\hat{u}_t(M;\pi_0)$ is equal to the comparator control $u_t(\pi)$, (2) we show the un-projected un-truncated control $\hat{u}_t(M;\pi_0)$ is close to the un-projected truncated control $\tilde{u}_t(M;\pi_0)$, (3) we show that since $u_t(\pi)$ always satisfies $u_t(\pi)\in\Delta^{d_x}$ and $\tilde{u}_t(M;\pi_0)$ is sufficiently close to $u_t(\pi)$, the projection operator does not change $\tilde{u}_t(M;\pi_0)$ much and therefore preserves the proximity of the projected $u_t(M;\pi_0)$ to $u_t(\pi)$. Then, \cref{eq:cost-approx-ldc} just follows by the regularity assumptions on the system and the Lipschitz assumption on the cost functions. 

\paragraph{Step 1: expressive power of the parameterization.} Our first step is to carefully choose $M$ and $\pi_0$, such that 
$$\exists \pi_0, \forall \pi, \exists M, \forall t, \hat{u}_t(M;\pi_0)=u_t(\pi).$$
In particular, we choose $\pi_0$ to be the linear policy $\begin{bmatrix}
\left(\mathbb{A}_{K_0}^{\alpha}\right)^{j-1} & 0_{d_x\times d_x}\\
0_{d_x\times d_x} & 0_{d_x\times d_x}
\end{bmatrix}$ where $\mathbb{A}_{K_0}^{\alpha}=(1-\alpha)A+\alpha B K_0 C$, and $M$ to be
$$
M^{[0]}=\begin{bmatrix}
D_{\pi} & C_{\pi}
\end{bmatrix}, \quad M^{[i]}=C_{\pi,cl,u} A_{\pi,cl}^{i-1} P_{\pi,\pi_0}, \quad \forall 1\le i \le H,
$$
where $P_{\pi,\pi_0} =\begin{bmatrix}
\alpha B(D_{\pi}-K_0) & \alpha B C_{\pi}\\
B_{\pi} & A_{\pi}-I
\end{bmatrix}$. Recall that from Lemma~\ref{lem:ldc-simplex-unfolding}, the target control $u_t(\pi)$ can be written as the following:
\begin{align*}
u_t(\pi)=\sum_{i=1}^t \lambda_{t,i} C_{\pi,cl,u} (A_{\pi,cl})^{i-1} \begin{bmatrix}
    w_{t-i} \\
    \frac{1}{d_x}\mathbf{1}_{d_x}
\end{bmatrix},
\end{align*}
and we would like to show $\hat{u}_t(M;\pi_0)$ is exactly equal to it.

To this end, we analyze the two terms of $\hat{u}_t(M;\pi_0)$ separately. First, since $\pi_0$ is effectively a Markov policy, we can unfold the signals $y_t(\pi_0)$ generated by $\pi_0$ similarly as in the Markov comparator class setting, i.e.
\begin{align*}
y_t(\pi_0)=\sum_{i=1}^t\lambda_{t,i}C(\mathbb{A}_{K_0}^{\alpha})^{i-1}w_{t-i}, \quad \mathbb{A}_{K_0}^{\alpha}=(1-\alpha)A+\alpha BK_0C.
\end{align*}
Using the unfolding of the signals, we can expand the two terms of $\hat{u}_t(M;\pi_0)$ as the following:
\begin{align*}
    M^{[0]}\begin{bmatrix}
    y_t(\pi_0) \\
    \frac{1}{d_x}\mathbf{1}_{d_x}
\end{bmatrix}&=\begin{bmatrix}
    D_{\pi} & C_{\pi}
\end{bmatrix} \begin{bmatrix}
    \sum_{i=1}^t \lambda_{t,i}C (\mathbb{A}_{K_0}^{\alpha})^{i-1} w_{t-i} \\ \sum_{i=1}^t \lambda_{t,i} \frac{1}{d_x}\mathbf{1}_{d_x}
\end{bmatrix}\\
&=\sum_{i=1}^t \lambda_{t,i} \begin{bmatrix}
    D_{\pi} & C_{\pi}
\end{bmatrix} \begin{bmatrix}
    C & 0 \\ 0 & I_{d_x\times d_x}
\end{bmatrix} \begin{bmatrix}
    \mathbb{A}_{K_0}^{\alpha} & 0 \\ 0 & I_{d_x\times d_x}
\end{bmatrix}^{i-1} \begin{bmatrix}
    w_{t-i} \\ \frac{1}{d_x}\mathbf{1}_{d_x}
\end{bmatrix}\\
&=\sum_{i=1}^t \lambda_{t,i} \begin{bmatrix}
    D_{\pi} C & C_{\pi}
\end{bmatrix} \begin{bmatrix}
    \mathbb{A}_{K_0}^{\alpha} & 0 \\ 0 & I_{d_x\times d_x}
\end{bmatrix}^{i-1} \begin{bmatrix}
    w_{t-i} \\ \frac{1}{d_x}\mathbf{1}_{d_x}
\end{bmatrix}\\
&=\sum_{i=1}^t \lambda_{t,i} C_{\pi,cl,u} \begin{bmatrix}
    \mathbb{A}_{K_0}^{\alpha} & 0 \\ 0 & I_{d_x\times d_x}
\end{bmatrix}^{i-1} \begin{bmatrix}
    w_{t-i} \\ \frac{1}{d_x}\mathbf{1}_{d_x}
\end{bmatrix},
\end{align*}
where in the first equality we used the identity that $\forall t\ge 1$, $\sum_{i=1}^{t}\lambda_{t,i}=1$.
For the second part, we have that
\begin{align*}    
& \quad \sum_{i=1}^t\bar{\lambda}_{t,i}M^{[i]}\begin{bmatrix}
    y_{t-i}(\pi_0) \\
    \frac{1}{d_x}\mathbf{1}_{d_x}
\end{bmatrix}\\
&= \sum_{i=1}^t \bar{\lambda}_{t,i} C_{\pi,cl,u} A_{\pi,cl}^{i-1} P_{\pi,\pi_0} \begin{bmatrix}
    \sum_{j=1}^{t-i} \lambda_{t-i,j}C (\mathbb{A}_{K_0}^{\alpha})^{j-1} w_{t-i-j} \\ \sum_{j=1}^{t-i} \lambda_{t-i,j} \frac{1}{d_x}\mathbf{1}_{d_x}
\end{bmatrix}\\
&=\sum_{i=1}^t \sum_{j=i+1}^t \bar{\lambda}_{t,i} \lambda_{t-i,j-i} C_{\pi,cl,u} A_{\pi,cl}^{i-1} P_{\pi,\pi_0} \begin{bmatrix}
    C & 0 \\ 0 & I_{d_x\times d_x}
\end{bmatrix} \begin{bmatrix}
    \mathbb{A}_{K_0}^{\alpha} & 0 \\ 0 & I_{d_x\times d_x}
\end{bmatrix}^{j-i-1} \begin{bmatrix}
    w_{t-j} \\ \frac{1}{d_x}\mathbf{1}_{d_x}
\end{bmatrix}\\
&= \sum_{i=1}^t \sum_{j=i+1}^t \lambda_{t,j} C_{\pi,cl,u} A_{\pi,cl}^{i-1} P_{\pi,\pi_0} \begin{bmatrix}
    C & 0 \\ 0 & I_{d_x\times d_x}
\end{bmatrix} \begin{bmatrix}
    \mathbb{A}_{K_0}^{\alpha} & 0 \\ 0 & I_{d_x\times d_x}
\end{bmatrix}^{j-i-1} \begin{bmatrix}
    w_{t-j} \\ \frac{1}{d_x}\mathbf{1}_{d_x}
\end{bmatrix}\\
&=\sum_{j=2}^t \sum_{i=1}^{j-1} \lambda_{t,j} C_{\pi,cl,u} A_{\pi,cl}^{i-1} P_{\pi,\pi_0} \begin{bmatrix}
    C & 0 \\ 0 & I_{d_x\times d_x}
\end{bmatrix} \begin{bmatrix}
    \mathbb{A}_{K_0}^{\alpha} & 0 \\ 0 & I_{d_x\times d_x}
\end{bmatrix}^{j-i-1} \begin{bmatrix}
    w_{t-j} \\ \frac{1}{d_x}\mathbf{1}_{d_x}
\end{bmatrix},
\end{align*}
where the second to the last equality follows from that $\bar{\lambda}_{t,i}\lambda_{t-i, j-i}=\lambda_{t,j}$ as shown in \cref{eq:lambda-property}. Here we recall that
$$
A_{\pi,cl}=\begin{bmatrix}
    (1-\alpha)A+\alpha BD_{\pi}C & \alpha B C_{\pi} \\ B_{\pi} C & A_{\pi}
\end{bmatrix}, \quad P_{\pi,\pi_0} \begin{bmatrix}
    C & 0 \\ 0 & I
\end{bmatrix}= \begin{bmatrix}
    \alpha B(D_{\pi} K_0)C & \alpha B C_{\pi} \\ B_{\pi} C & A_{\pi} - I_{d_x \times d_x}
\end{bmatrix},
$$
which yields the nice property
$$
A_{\pi,cl}-\begin{bmatrix}
    \mathbb{A}_{K_0}^{\alpha} & 0 \\ 0 & I_{d_x\times d_x}
\end{bmatrix}=P_{\pi,\pi_0} \begin{bmatrix}
    C & 0 \\ 0 & I_{d_x \times d_x}
\end{bmatrix}.
$$
Now, we can continue rewriting the previous equation
\begin{align*}
    & \quad \sum_{i=1}^t\bar{\lambda}_{t,i}M^{[i]}\begin{bmatrix}
    y_{t-i}(\pi_0) \\
    \frac{1}{d_x}\mathbf{1}_{d_x}
\end{bmatrix}\\
&= \sum_{j=2}^t \sum_{i=1}^{j-1} \lambda_{t,j} C_{\pi,cl,u} A_{\pi,cl}^{i-1} P_{\pi,\pi_0} \begin{bmatrix}
    C & 0 \\ 0 & I_{d_x\times d_x}
\end{bmatrix} \begin{bmatrix}
    \mathbb{A}_{K_0}^{\alpha} & 0 \\ 0 & I_{d_x\times d_x}
\end{bmatrix}^{j-i-1} \begin{bmatrix}
    w_{t-j} \\ \frac{1}{d_x}\mathbf{1}_{d_x}
\end{bmatrix}\\
&=\sum_{j=2}^t \lambda_{t,j} C_{\pi,cl,u} \left( \sum_{i=1}^{j-1} A_{\pi,cl}^{i-1} \left(A_{\pi,cl}-\begin{bmatrix}
    \mathbb{A}_{K_0}^{\alpha} & 0 \\ 0 & I_{d_x\times d_x}
\end{bmatrix}\right) \begin{bmatrix}
    \mathbb{A}_{K_0}^{\alpha} & 0 \\ 0 & I_{d_x\times d_x}
\end{bmatrix}^{j-i-1} \right)\begin{bmatrix}
    w_{t-j} \\ \frac{1}{d_x}\mathbf{1}_{d_x}
\end{bmatrix}\\
&= \sum_{j=2}^t \lambda_{t,j} C_{\pi,cl,u} \left( A_{\pi,cl}^{j-1} - \begin{bmatrix}
    \mathbb{A}_{K_0}^{\alpha} & 0 \\ 0 & I_{d_x\times d_x}
\end{bmatrix}^{j-1} \right)\begin{bmatrix}
    w_{t-j} \\ \frac{1}{d_x}\mathbf{1}_{d_x}
\end{bmatrix}\\
&=\sum_{j=1}^t \lambda_{t,j} C_{\pi,cl,u} \left( A_{\pi,cl}^{j-1} - \begin{bmatrix}
    \mathbb{A}_{K_0}^{\alpha} & 0 \\ 0 & I_{d_x\times d_x}
\end{bmatrix}^{j-1} \right)\begin{bmatrix}
    w_{t-j} \\ \frac{1}{d_x}\mathbf{1}_{d_x}
\end{bmatrix}.
\end{align*}
Combining the two parts, we have the desired equation
$$
\hat{u}_t(M;\pi_0):=M^{[0]}\begin{bmatrix}
    y_t(\pi_0) \\
    \frac{1}{d_x}\mathbf{1}_{d_x}
\end{bmatrix} +\sum_{i=1}^t\bar{\lambda}_{t,i}M^{[i]}\begin{bmatrix}
    y_{t-i}(\pi_0) \\
    \frac{1}{d_x}\mathbf{1}_{d_x}
\end{bmatrix} = \sum_{i=1}^t \lambda_{t,i} C_{\pi,cl,u} (A_{\pi,cl})^{i-1} \begin{bmatrix}
    w_{t-i} \\
    \frac{1}{d_x}\mathbf{1}_{d_x}
\end{bmatrix} =u_t(\pi).
$$

\paragraph{Step 2: low truncation error.}
The next step is to show that with the mixing time assumption on $A_{\pi,\mathrm{cl}}$ in Definition~\ref{def:mixing-simplex-ldc}, the truncation loss $\|\hat{u}_t(M;\pi_0)-\tilde{u}_t(M;\pi_0)\|_2$ is small, where
\begin{align*}
\tilde{u}_t(M;\pi_0):=M^{[0]}\begin{bmatrix}
    y_t(\pi_0) \\
    \frac{1}{d_x}\mathbf{1}_{d_x}
\end{bmatrix} +\sum_{i=1}^H\bar{\lambda}_{t,i}M^{[i]}\begin{bmatrix}
    y_{t-i}(\pi_0) \\
    \frac{1}{d_x}\mathbf{1}_{d_x}
\end{bmatrix} 
\end{align*}
is the truncated but unprojected control parameterized by $M$ (\cref{eq:truncated-unprojected-control}) operating on the Markov signals $y_t(\pi_0)$.

By definition, we unfold $\hat{u}_t(M;\pi_0)$ and $\tilde{u}_t(M;\pi_0)$ and get
\begin{align*}
\|\hat{u}_t(M;\pi_0)-\tilde{u}_t(M;\pi_0)\|&=\left\|\sum_{i=H+1}^t\bar{\lambda}_{t,i}M^{[i]}\begin{bmatrix}
    y_{t-i}(\pi_0) \\
    \frac{1}{d_x}\mathbf{1}_{d_x}
\end{bmatrix} \right\|\\
&=\left\|\sum_{i=H+1}^t \bar{\lambda}_{t,i}C_{\pi,\mathrm{cl},u}A_{\pi,\mathrm{cl}}^{i-1}\begin{bmatrix}
 \alpha B (D_{\pi}-K_0) & \alpha B C_{\pi} \\
    B_{\pi} & A_{\pi}-I
\end{bmatrix}
\begin{bmatrix}
    y_{t-i}(\pi_0) \\
    \frac{1}{d_x}\mathbf{1}_{d_x}
\end{bmatrix}\right\|.
\end{align*}
We have the following equivalent form for any $v_y\in \Delta^{d_y}, v_x\in \Delta^{d_x}$:
\begin{align*}
\begin{bmatrix}
 \alpha B (D_{\pi}-K_0) & \alpha B C_{\pi} \\
    B_{\pi} & A_{\pi}-I
\end{bmatrix}
\begin{bmatrix}
    v_y \\
    v_x
\end{bmatrix}&=\left(\begin{bmatrix}
    B(D_{\pi}v_y+C_{\pi}v_x) \\ B_{\pi}v_y+A_{\pi}v_x
\end{bmatrix}-\begin{bmatrix}
    K_0 v_y \\ v_x
\end{bmatrix}\right) \\
& \quad + (1-\alpha)\left(\begin{bmatrix}
    B(D_{\pi}v_y+C_{\pi}v_x) \\ \frac{1}{d_x}\mathbf{1}_{d_x}
\end{bmatrix}-\begin{bmatrix}
    K_0 v_y \\ \frac{1}{d_x}\mathbf{1}_{d_x}
\end{bmatrix}\right).
\end{align*}
Rewrite as following for simplicity of notation:
\begin{align*}
v_1=\begin{bmatrix}
    B(D_{\pi}v_y+C_{\pi}v_x) \\ B_{\pi}v_y+A_{\pi}v_x
\end{bmatrix}, \quad v_2=\begin{bmatrix}
    K_0 v_y \\ v_x
\end{bmatrix}\\
v_3=\begin{bmatrix}
    B(D_{\pi}v_y+C_{\pi}v_x) \\ \frac{1}{d_x}\mathbf{1}_{d_x}
\end{bmatrix}, \quad v_4 = \begin{bmatrix}
    K_0 v_y \\ \frac{1}{d_x}\mathbf{1}_{d_x}
\end{bmatrix}.
\end{align*}
Note that $v_1, v_2, v_3, v_4\in\Delta^{d_x}\times \Delta^{d_x}$. Recall the mixing-time assumption on $A_{\pi,\mathrm{cl}}$ as in Definition~\ref{def:mixing-simplex-ldc}, denote $x^{\star}, s^{\star}\in\Delta^{d_x}$ as the unique pair of vectors such that 
\begin{align*}
t^{\mathrm{mix}}(A_{\pi,\mathrm{cl}})=\min_{t\in\mathbb{N}}\left\{t: \quad \forall t'\ge t, \sup_{p\in\Delta^{d_x},q\in\Delta^{d_x}}\|A_{\pi,\mathrm{cl}}^t(p,q)-(x^{\star},s^{\star})\|_1\le \frac{1}{4}\right\}\le \tau.
\end{align*}
For simplicity, write $p^{\star}=(x^{\star}, s^{\star})\in\mathbb{R}^{2d_x}$ to be the concatenated vector.
Then, we can further bound $\|\hat{u}_t(M;\pi_0)-\tilde{u}_t(M;\pi_0)\|$ by
\begin{align*}
&\quad \|\hat{u}_t(M;\pi_0)-\tilde{u}_t(M;\pi_0)\|\\
&=\left\|\sum_{i=H+1}^t \bar{\lambda}_{t,i}C_{\pi,\mathrm{cl},u}A_{\pi,\mathrm{cl}}^{i-1}[(v_1-v_2)+(1-\alpha)(v_3-v_4)]\right\|\\
&\le \left\|\sum_{i=H+1}^t \bar{\lambda}_{t,i}C_{\pi,\mathrm{cl},u}A_{\pi,\mathrm{cl}}^{i-1}(v_1-v_2)\right\| + (1-\alpha)\left\|\sum_{i=H+1}^t \bar{\lambda}_{t,i}C_{\pi,\mathrm{cl},u}A_{\pi,\mathrm{cl}}^{i-1}(v_3-v_4)\right\|\\
&\le \left\|\sum_{i=H+1}^t \bar{\lambda}_{t,i}C_{\pi,\mathrm{cl},u}A_{\pi,\mathrm{cl}}^{i-1}(v_1-p^{\star})\right\| + \left\|\sum_{i=H+1}^t \bar{\lambda}_{t,i}C_{\pi,\mathrm{cl},u}A_{\pi,\mathrm{cl}}^{i-1}(v_2-p^{\star})\right\|\\
&\quad + (1-\alpha)\left\|\sum_{i=H+1}^t \bar{\lambda}_{t,i}C_{\pi,\mathrm{cl},u}A_{\pi,\mathrm{cl}}^{i-1}(v_3-p^{\star})\right\|+ (1-\alpha)\left\|\sum_{i=H+1}^t \bar{\lambda}_{t,i}C_{\pi,\mathrm{cl},u}A_{\pi,\mathrm{cl}}^{i-1}(v_4-p^{\star})\right\|.
\end{align*}
By the mixing-time assumption, choice of $H\ge 2\tau\lceil \log\frac{4096\tau d_x^{7/2} L T^2}{\epsilon}\rceil$ and Lemma 18 in \citep{golowich2024online}, we can further bound 
\begin{align*}
\|\hat{u}_t(M;\pi_0)-\tilde{u}_t(M;\pi_0)\|&\le 4 \sum_{i=H}^{\infty}2^{-i/\tau}\le \frac{\eps}{512d_x^{7/2}LT^2}
\end{align*}
Since in the previous step we have shown that $\hat{u}_t(M;\pi_0)=u_t(\pi)$, the above bound directly translates to
\begin{align*}
\|\tilde{u}_t(M;\pi_0)-u_t(\pi)\|&\le 4 \sum_{i=H}^{\infty}2^{-i/\tau}\le \frac{\eps}{512d_x^{7/2}LT^2}
\end{align*}

\paragraph{Step 3: low projection error.} By the previous step and the Lipschitz assumptions of the operators $\phi$ and $\pi$, we have that since $u_t(\pi)\in\Delta^{d_x}$,
\begin{align*}
\|u_t(M;\pi_0)-u_t(\pi)\|_1&=\left\|\phi^{-1}(\frac{\phi( \tilde{u}_t(M;\pi_0))}{\pi(\phi( \tilde{u}_t(M;\pi_0))})-\phi^{-1}(\frac{\phi( u_t(\pi))}{\pi(\phi( u_t(\pi))})\right\|_1\\
&\le 2\cdot \left\|\frac{\phi( \tilde{u}_t(M;\pi_0))}{\pi(\phi( \tilde{u}_t(M;\pi_0))}-\frac{\phi( u_t(\pi))}{\pi(\phi( u_t(\pi))}\right\|_1\\
&\le 64 d_x^{7/2}\cdot \|\tilde{u}_t(M;\pi_0)-u_t(\pi)\|_1\\
&\le 64 d_x^{7/2}\cdot \frac{\eps}{512d_x^{7/2}LT^2}\\
&= \frac{\eps}{8LT^2}.
\end{align*}
Note that the observation $y_t(M;\pi_0), y_t(\pi)$ can unfold as follows as in \cref{eq:yt-unfold-po} and thus
\begin{align*}
\|y_t(M;\pi_0)-y_t(\pi)\|_1&=\left\|\sum_{i=1}^{t-1}\bar{\lambda}_{t,i}C((1-\alpha)A)^{i-1}\alpha B(u_{t-i}(M;\pi_0)-u_{t-i}(\pi))\right\|_1\\
&\le \frac{\eps}{8LT^2} \sum_{i=1}^{t-1}\bar{\lambda}_{t,i} \le  \frac{\eps}{8LT^2} \cdot T. 
\end{align*}
By the Lipschitz assumption on the cost function $c_t$ in Assumption~\ref{asm:po-convex-loss}, we have that
\begin{align*}
&\quad \left|\sum_{t=1}^T c_t(y_t(M;\pi_0),u_t(M;\pi_0))-\sum_{t=1}^T c_t(y_t(\pi), u_t(\pi))\right|\\
&\le \sum_{t=1}^T |c_t^y(y_t(M;\pi_0))-c_t^y(y_t(\pi))|+\sum_{t=1}^T |c_t^u(u_t(M;\pi_0))-c_t^u(u_t(\pi))|\\
&\le L\cdot \left(\sum_{t=1}^T \|y_t(M;\pi_0)-y_t(\pi)\|_1+\sum_{t=1}^T \|u_t(M;\pi_0)-u_t(\pi)\|_1\right)\\
&\le 2LT \cdot \frac{\eps}{8LT}\\
&= \frac{\eps}{4}.
\end{align*}
Thus, we have proved \cref{eq:cost-approx-ldc}. We move on to prove \cref{eq:ext-approx-ldc}. 

\paragraph{Proof of \cref{eq:ext-approx-ldc}.} The proof follows similarly to the proof of \cref{eq:ext-approx}. By the definition of $e_t(M)$ in \cref{eq:et-def}, we have
\begin{align*}
&\quad \left|\sum_{t=1}^T e_t(M)-\sum_{t=1}^T c_t(y_t(M;\pi_0),u_t(M;\pi_0))\right|\\
&=\left|\sum_{t=1}^T \pi_{\mathcal{K}}(\phi(\tilde{u}_t(M;\pi_0)))\cdot c_t^u\left(\phi^{-1}\left(\frac{\phi(\tilde{u}_t(M;\pi_0)}{\pi_{\mathcal{K}}(\phi(\tilde{u}_t(M;K_0)))}\right)\right)-c_t^u(u_t(M;\pi_0))\right|\\
&\le \left|\sum_{t=1}^T (\pi_{\mathcal{K}}(\phi(\tilde{u}_t(M;\pi_0)))-1)\cdot c_t^u\left(\phi^{-1}\left(\frac{\phi(\tilde{u}_t(M;\pi_0)}{\pi_{\mathcal{K}}(\phi(\tilde{u}_t(M;K_0)))}\right)\right)\right|\\
&\quad +\left|\sum_{t=1}^T c_t^u\left(\phi^{-1}\left(\frac{\phi(\tilde{u}_t(M;\pi_0)}{\pi_{\mathcal{K}}(\phi(\tilde{u}_t(M;K_0)))}\right)\right)-c_t^u(u_t(M;\pi_0))\right|.
\end{align*}
To bound the first term, note that
\begin{align*}
|\pi_{\mathcal{K}}(\phi(\tilde{u}_t(M;\pi_0)))-1|&=|\pi_{\mathcal{K}}(\phi(\tilde{u}_t(M;\pi_0)))-\pi_{\mathcal{K}}(\phi(u_t(\pi)))|\\
&\le 2d_x\cdot \|\tilde{u}_t(M;\pi_0)-u_t(\pi)\|_1\\
&\le \frac{\eps}{256d_x^{5/2}LT^2},
\end{align*}
and therefore by Lipschitz assumption of $c_t$,
\begin{align*}
\left|\sum_{t=1}^T (\pi_{\mathcal{K}}(\phi(\tilde{u}_t(M;\pi_0)))-1)\cdot c_t^u\left(\phi^{-1}\left(\frac{\phi(\tilde{u}_t(M;\pi_0)}{\pi_{\mathcal{K}}(\phi(\tilde{u}_t(M;K_0)))}\right)\right)\right|&\le \frac{\eps}{256d_x^{5/2}T}.
\end{align*}
To bound the second term, we have
\begin{align*}
& \quad \left|\sum_{t=1}^T c_t^u\left(\phi^{-1}\left(\frac{\phi(\tilde{u}_t(M;\pi_0)}{\pi_{\mathcal{K}}(\phi(\tilde{u}_t(M;K_0)))}\right)\right)-c_t^u(u_t(M;\pi_0))\right|\\
&\le L \sum_{t=1}^T \left\|\phi^{-1}\left(\frac{\phi(\tilde{u}_t(M;\pi_0)}{\pi_{\mathcal{K}}(\phi(\tilde{u}_t(M;K_0)))}\right)-u_t(M;\pi_0)\right\|_1\\
&\le 2L\sum_{t=1}^T \left\|\frac{\phi(\tilde{u}_t(M;\pi_0)}{\pi_{\mathcal{K}}(\phi(\tilde{u}_t(M;K_0)))}-\phi(u_t(M;\pi_0))\right\|_1\\
&\le 2L\sum_{t=1}^{T}  \frac{\|\phi(\tilde{u}_t(M;\pi_0))-\phi(u_t(M;\pi_0))\|_1}{\pi_{\mathcal{K}}(\phi(\tilde{u}_t(M;\pi_0)))} \\
&\quad + 2L\sum_{t=1}^{T}  \left(1-\frac{1}{\pi_{\mathcal{K}}(\phi(\tilde{u}_t(M;\pi_0)))}\right)\cdot \|\phi(u_t(M;\pi_0))\|_1\\
&\le 2L\sum_{t=1}^T\|\tilde{u}_t(M;\pi_0)-u_t(M;\pi_0)\|_1+2L\sum_{t=1}^T (\pi_{\mathcal{K}}(\phi(\tilde{u}_t(M;\pi_0)))-1)\\
&\le 2LT\left(\frac{\eps}{8LT^2}+\frac{\eps}{512d_x^{7/2}LT^2}+\frac{\eps}{256d_x^{5/2}LT^2}\right)\le \frac{\eps}{2T}.
\end{align*}
Combining, we have that for all $T\ge 1$,
\begin{align*}
\left|\sum_{t=1}^T e_t(M)-\sum_{t=1}^T c_t(y_t(M;\pi_0),u_t(M;\pi_0))\right|\le \frac{3\eps}{4},
\end{align*}
and conclude the proof. 
\end{proof}

\subsection{Bounding the memory mismatch error}
\label{sec:mem-mismatch-po-nonmarkov}
\begin{lemma} [Memory mismatch error, non-Markov Policy]
\label{lem:nonmarkovpo-mem-mismatch}
Suppose that $(c_t)_t$ satisfy \cref{asm:po-convex-loss} with Lipschitz parameter $L$. Let $\tau,\beta > 0$, and suppose that $\Pi_{\tau}(\ML)$ is nonempty. Consider the execution of $\NMGPCPOS$ (\cref{alg:gpc-po-nonmarkov}) on $\ML$. If the iterates $(M_t\^{0:H})_{t \in [T]}$ satisfy
 \begin{align}
   % \gpcnorm{(p_t, M_t\^{1:H}) - (p_{t+1}, M_{t+1}\^{1:H})} \leq \beta
   \max_{1 \leq t \leq T-1} \max_{i \in [H]} \oneonenorm{M_t\^i - M_{t+1}\^i} \leq \beta,
   \label{eq:ptmt-change-po-nonmarkov}
 \end{align}
then for each $t \in [T]$, the loss function $\ell_t$ computed at time step $t$ satisfies
    \begin{align}
| c_t(y_t(M_t;K_0),u_t(M_t;K_0)) - c_t(y_t, u_t)| &\leq O\left(L d_x^{11/2} \beta\log^2(1/\beta)H\tau^2\right)\nonumber.
    \end{align}
\iffalse
Assume that \cref{alg:gpc-po} satisfies
\begin{align*}
\max_{1\le t\le T-1}\max_{0\le i\le H} \|M_{t}^{[i]}-M_{t+1}^{[i]}\|_{1\rightarrow1}\le \beta,
\end{align*}
then we have
\begin{align*}
\left|\sum_{t=1}^T \ell_t(M_t)-\sum_{t=1}^T c_t(y_t,u_t)\right|\le O(L\tau^2\beta\log^2(1/\beta)HT).
\end{align*}
\fi
\end{lemma}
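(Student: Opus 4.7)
The plan is to follow the two-case structure of \cref{lem:po-mem-mismatch} and adapt each step to the non-Markov parameterization. By \cref{lemma:signal-realizability}, the observation discrepancy unfolds as
$$y_t - y_t(M_t;K_0) = \sum_{i=1}^{t-1} \bar\lambda_{t,i}\, C((1-\alpha)A)^{i-1} \alpha B\bigl(u_{t-i} - u_{t-i}(M_t;K_0)\bigr),$$
and the Lipschitz cost assumption reduces the lemma to bounding $\|y_t - y_t(M_t;K_0)\|_1$. Letting $\tau_A := \tmix(A)$ and $t_0 := \lceil \tau_A \log(2/\beta)\rceil$, I would split the sum at $i = t_0$.

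In \emph{Case 1} ($\tau_A \le 4\tau$), the tail $i > t_0$ is bounded by adding and subtracting a stationary distribution of $A$ and invoking $\|A^i v\|_1 \le 2^{-\lfloor i/\tau_A\rfloor}\|v\|_1$ for zero-mean $v$, exactly as in \cref{lem:po-mem-mismatch}. For the head $i \le t_0$, the slowly-changing assumption on $(M_t)$ combined with the enlarged signal $(o_{t-i},\tfrac{1}{d_x}\mathbf{1}_{d_x})$ used by \cref{alg:gpc-po-nonmarkov} yields $\|\tilde{u}_{t-i} - \tilde{u}_{t-i}(M_t;K_0)\|_1 \lesssim Hi\beta$, and \cref{cor:lip-control} passes this through the Minkowski projection with an $O(d_x^{7/2})$ blowup.

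\emph{Case 2} ($\tau_A > 4\tau$) is the main obstacle, because the mixing hypothesis is now on $A_{\pi,\mathrm{cl}} \in \mathbb{R}^{2d_x \times 2d_x}$ while $A$ lives in $\mathbb{R}^{d_x \times d_x}$, so the Markov trick of bounding $\|A - \mathbb{A}_{K^\star}^\alpha\|_{1\to 1} \le 2\alpha$ no longer applies directly. Following the sketch in the paper, introduce the auxiliary bridge
$$A_{\mathrm{compare}} = \begin{bmatrix} A & 0 \\ B_\pi C & A_\pi \end{bmatrix},$$
observe that $\|A_{\pi,\mathrm{cl}} - A_{\mathrm{compare}}\|_{1\to 1} \le O(\alpha)$ since their difference is concentrated in the top block and each perturbation carries a factor of $\alpha$, then transfer the mixing of $A_{\pi,\mathrm{cl}}$ to $A_{\mathrm{compare}}$ via Lemma~18 of \cite{golowich2024online}. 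Block lower triangularity of $A_{\mathrm{compare}}$ then gives, for any $v_1, v_2 \in \Delta^{d_x}$, that $A_{\mathrm{compare}}^t((v_1, 0) - (v_2, 0))$ has top block exactly $A^t(v_1 - v_2)$, so projecting yields a mixing-time bound $\tmix(A) \lesssim \tau$ whenever $\alpha$ is too small, contradicting $\tau_A > 4\tau$ and forcing $\alpha \gtrsim 1/\tau$.

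The subtle point in this bridging step is that although $(v_j, 0) \in \Delta^{2d_x}$, their images $A_{\mathrm{compare}}^t(v_j, 0)$ need not lie in $\Delta^{2d_x}$ because the block $B_\pi C$ is not stochastic on its own, so the relevant quantity $A_{\mathrm{compare}}^t((v_1, 0) - (v_2, 0))$ lives in the enlarged set $\Delta^{d_x} - \Delta^{d_x}$ lifted to $\mathbb{R}^{2d_x}$ rather than the simplex itself. Estimating its $\ell_1$ norm through the lift costs an additional factor of $d_x$ relative to the Markov analysis, which is what produces the final $d_x^{11/2}$ dependence in place of $d_x^{7/2}$. Once $\alpha \gtrsim 1/\mathrm{poly}(\tau)$ is in hand, the geometric summation $\sum_{i=1}^{t-1}(1-\alpha)^{i-1}\, i\, d_x^{11/2} H\beta = O(d_x^{11/2} H\tau^2 \beta)$ closes Case 2, and combining the two cases with the Lipschitz assumption on $c_t$ completes the proof.
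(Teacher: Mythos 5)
Your proposal follows the same two-case structure as the paper's proof, and Case 1 together with the choice of the bridge matrix $A_{\mathrm{compare}}$ in Case 2 matches the paper's argument. However, the quantitative bookkeeping in Case 2 is internally inconsistent and does not actually yield the claimed $d_x^{11/2}$ bound as written.

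Concretely: you assert that the contradiction argument forces $\alpha \gtrsim 1/\tau$, and separately that the extra factor of $d_x$ from the $\ell_1$ estimate of the lifted difference (which lives in $\Delta^{d_x}\times\Delta^{d_x} - \Delta^{d_x}\times\Delta^{d_x}$ rather than a simplex) inflates the \emph{per-step} projection constant from $d_x^{7/2}$ to $d_x^{11/2}$. Neither claim is right, and together they do not cohere. The per-step bound $\|u_{t-i} - u_{t-i}(M_t;K_0)\|_1 \le C d_x^{7/2} H i \beta$ comes from the Minkowski projection via \cref{thm:proj-lip} and is \emph{unchanged} in the non-Markov setting. What the extra $d_x$ actually does is weaken the threshold in the contradiction argument: the perturbation estimate becomes $\|A_{\mathrm{compare}}^t(v_1,v_2) - A_{\pi,\mathrm{cl}}^t(v_1,v_2)\|_1 \le 2\alpha d_x t$, so to keep this below $1/16$ at $t = 4\tau$ one must assume $\alpha \le 1/(256 d_x \tau)$ for the contradiction, yielding only $\alpha > 1/(256 d_x \tau)$ rather than $\alpha \gtrsim 1/\tau$. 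The extra $d_x^2$ then enters through the geometric sum, $\sum_{i\ge 1}(1-\alpha)^{i-1} i \le 1/\alpha^2 \lesssim d_x^2\tau^2$, giving $d_x^{7/2}\cdot d_x^2 = d_x^{11/2}$ in total. With your stated $\alpha \gtrsim 1/\tau$ and per-term $d_x^{7/2}$ the sum would give only $d_x^{7/2}H\tau^2\beta$; you recover $d_x^{11/2}$ only by inserting an unjustified $d_x^{11/2}$ into the summand. You should redo the Case 2 accounting along these lines (and note that the paper's mixing transfer is carried out with an explicit telescoping identity $A_{\mathrm{compare}}^t - A_{\pi,\mathrm{cl}}^t = \sum_i A_{\pi,\mathrm{cl}}^{t-i}(A_{\mathrm{compare}}-A_{\pi,\mathrm{cl}})A_{\mathrm{compare}}^{i-1}$ together with the observation that $A_{\mathrm{compare}}$ preserves $\Delta^{d_x}\times\Delta^{d_x}$, rather than a bare $\|\cdot\|_{1\to 1}$ perturbation lemma, precisely so that the $d_x$-dependence of the error term can be tracked).
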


\begin{proof} 
Fix $t \in [T]$. First, by \cref{eq:yt-unfold-po}, we have that for all $M\in\MM_{d_x,d_y,H}^{+}$, %\noah{instances of $y_t(M)$ should be $y_t(M; K_0)$ (and same for $u_t(M)$)?}
\begin{align}
\label{eq:yt-diff-nonmarkov}
y_t-y_t(M;K_0) &= \sum_{i=1}^{t-1}\bar{\lambda}_{t,i}C ((1-\alpha)A)^{i-1}\alpha B(u_{t-i}-u_{t-i}(M;K_0)). 
\end{align}
We consider two cases of the mixing time $\tau_A$ of $A$. Let $\tau_A=t^{\mathrm{mix}}(A)$, and $t_0=\lceil \tau_A\log(2/\beta)\rceil$. 

\paragraph{Case 1: $\tau_A\le 4\tau$.} Let $p_{A}^{\star}$ satisfy $p_{A}^{\star}=Ap_{A}^{\star}$ be a stationary distribution of $A$, then by adding and subtracting $p_{A}^{\star}$, we have $\forall M\in\MM_{d_x,d_y,H}^{+}$,
\begin{align*}
&\left\|\sum_{i=t_0+1}^{t-1}\bar{\lambda}_{t,i}[(1-\alpha)A]^{i-1}\alpha B(u_{t-i}-u_{t-i}(M;K_0))\right\|_1\\
&\le \left\|\alpha \sum_{i=t_0+1}^{t-1}\bar{\lambda}_{t,i}[(1-\alpha)A]^{i-1}(Bu_{t-i}-p_{A}^{\star})\right\|_1+ \left\|\alpha\sum_{i=t_0+1}^{t-1}\bar{\lambda}_{t,i}[(1-\alpha)A]^{i-1}(Bu_{t-i}(M;K_0)-p_{A}^{\star})\right\|_1\\
&\le \alpha \sum_{i=t_0+1}^{t-1}\bar{\lambda}_{t,i}(1-\alpha)^{i-1}\|A^{i-1}(Bu_{t-i}-p_{A}^{\star})\|_1+ \alpha\sum_{i=t_0+1}^{t-1}\bar{\lambda}_{t,i}(1-\alpha)^{i-1}\|A^{i-1}(Bu_{t-i}(M;K_0)-p_{A}^{\star})\|_1\\
&\le 2 \sum_{i=t_0+1}^{t-1} \frac{1}{2^{\lfloor (i-1)/\tau_A\rfloor}}\le 2\tau_A\beta\sum_{i=0}^{\infty}\frac{1}{2^i}\le C\tau\beta\log(1/\beta).
\end{align*}
Thus, it suffices to show a bound for the first $t_0$ terms. We have that
\begin{align*}
\|\tilde{u}_{t-i}-\tilde{u}_{t-i}(M_t;K_0)\|_1&\le \|(M_{t-i}^{[0]}-M_{t}^{[0]})y_{t-i}(K_0)\|_1+\sum_{j=1}^{H}\bar{\lambda}_{t-i,j}\|(M_{t-i}^{[j]}-M_{t}^{[j]})y_{t-i-j}(K_0)\|_1\\
&\le (H+1)i\beta\le 2Hi\beta,
\end{align*}
since $y_t\in\Delta^{d_y}$, $\forall t$, and $\max_{0\le j\le H}\|M_{t-i}^{[j]}-M_{t}^{[j]}\|_{1\rightarrow 1}\le i\beta$ by the assumption. To bound the $\|u_{t-i}-u_{t-i}(M_t;K_0)\|_1$, we note that by the non-expanding condition established in Lemma~\ref{thm:proj-lip}, we have that
\begin{align*}
\|u_{t-i}-u_{t-i}(M_t;K_0)\|_1&\le 2\sqrt{d_x}\left\|u_{t-i}-u_{t-i}(M_t;K_0)\right\|_2\\
&\le C d_x^{7/2}\|\tilde{u}_{t-i}-\tilde{u}_{t-i}(M_t;K_0)\|_1\\
&\le Cd_x^{7/2}Hi\beta.
\end{align*}
Thus, we can bound $\|y_t-y_t(M_t;K_0)\|_1$ as the following: 
\begin{align*}
\|y_t-y_t(M_t;K_0)\|_1&\le \left\|\sum_{i=1}^{t_0}\bar{\lambda}_{t,i}C[(1-\alpha)A]^{i-1}\alpha B(u_{t-i}-u_{t-i}(M))\right\|_1+C\tau\beta\log(1/\beta)\\
&\le Cd_x^{7/2}\beta H t_0^2 +C\tau\beta\log(1/\beta)\\
&\le Cd_x^{7/2}\beta \log^2(1/\beta) H\tau^2
\end{align*}
for some constant $C$.

\paragraph{Case 2: $\tau_A>4\tau$.} 

Similarly, we will give a lower bound on $\alpha$. Recall that 
$$\tau=t^{\mathrm{mix}}(A_{\pi,\mathrm{cl}})=\min_{t\in\mathbb{N}}\left\{t: \quad \forall t'\ge t, \sup_{p\in\Delta^{d_x},q\in\Delta^{d_x}}\|A_{\pi,\mathrm{cl}}^t(p,q)-(x,s)\|_1\le \frac{1}{4}\right\},$$
where $A_{\pi,\mathrm{cl}}=\begin{bmatrix}
(1-\alpha)A+\alpha BD_{\pi}C & \alpha BC_{\pi} \\
 B_{\pi}C & A_{\pi}
\end{bmatrix}$. Suppose (for the purpose of contradiction) that $\alpha\le 1/(256d_x\tau)$. Denote 
$A_{compare}=\begin{bmatrix}
A & 0 \\
 B_{\pi}C & A_{\pi}
\end{bmatrix}$. We have that
$$
A_{compare}-A_{\pi,\mathrm{cl}}=\begin{bmatrix}
-\alpha(BD_{\pi}C -A) & \alpha B C_{\pi} \\
 0 & 0
\end{bmatrix}.
$$
For any $v_1,v_2\in \Delta^{d_x}$, we have that
\begin{align*}
 A_{compare}^t \begin{bmatrix}
v_1 \\
v_2
\end{bmatrix}-   A_{\pi,\mathrm{cl}}^t \begin{bmatrix}
v_1 \\
v_2
\end{bmatrix} &=\sum_{i=1}^t A_{\pi,\mathrm{cl}}^{t-i}(A_{compare}-A_{\pi,\mathrm{cl}}) A_{compare}^{i-1} \begin{bmatrix}
v_1 \\
v_2
\end{bmatrix}\\
&=\alpha\sum_{i=1}^t A_{\pi,\mathrm{cl}}^{t-i} \begin{bmatrix}
A-BD_{\pi}C & B C_{\pi} \\
 0 & 0
\end{bmatrix}A_{compare}^{i-1} \begin{bmatrix}
v_1 \\
v_2
\end{bmatrix}.
\end{align*}
Notice that $A_{compare}^{i-1}$ is always of form $\begin{bmatrix}
A & 0 \\
 B^* & C^*
\end{bmatrix}$ for $i>1$, where $B^*+C^*\in \BS^{d_x,d_x}$ and both $B^*, C^*$ are non-negative, meaning that $A_{compare}^{i-1}$ always maps $\Delta^{d_x} \times \Delta^{d_x}$ to itself. Therefore $A_{compare}^{i-1} \begin{bmatrix}
v_1 \\
v_2
\end{bmatrix}\in \Delta^{d_x} \times \Delta^{d_x}$ for any $i\ge 1$.

The matrix $\begin{bmatrix}
A-BD_{\pi}C & B C_{\pi} \\
 0 & 0
\end{bmatrix}$ always maps $\Delta^{d_x} \times \Delta^{d_x}$ to $\Delta^{d_x} \times \Delta^{d_x}-\Delta^{d_x} \times \Delta^{d_x}$. Because $A_{\pi,\mathrm{cl}}$ also maps $\Delta^{d_x} \times \Delta^{d_x}$ to itself, we have that $A_{\pi,\mathrm{cl}}^{t-i}$ maps $\Delta^{d_x} \times \Delta^{d_x}-\Delta^{d_x} \times \Delta^{d_x}$ to itself. Combining these three steps, we have that
$$
\| A_{compare}^t \begin{bmatrix}
v_1 \\
v_2
\end{bmatrix}-   A_{\pi,\mathrm{cl}}^t \begin{bmatrix}
v_1 \\
v_2
\end{bmatrix}\|_1\le 2\alpha d_x t.
$$
Because of our mixing time assumption on $A_{\pi,\mathrm{cl}}$, we have that for the same $x,s$,
$$
\sup_{p\in\Delta^{d_x},q\in\Delta^{d_x}}\|A_{compare}^{4\tau}(p,q)-(x,s)\|_1\le \frac{1}{16}+\frac{1}{16}\le \frac{1}{4}.
$$
As a result, $t^{\text{mix}}(A_{compare})\le 4\tau$. However, $\tau_A\le t^{\text{mix}}(A_{compare})$ by definition since the first half of $A_{compare}^{t}(p,q)$ is exactly $A^t p$ and therefore $\|A^{4\tau}p-x\|_1\le 1/4$, which leads to a contradiction.

Thus, we have
\begin{align*}
\|y_t-y_t(M_t;K_0)\|_1&\le \sum_{i=1}^{t-1}(1-\alpha)^{i-1}\|u_{t-i}-u_{t-i}(M_t;K_0)\|_1\\
&\le Cd_x^{7/2}H\beta\sum_{i=1}^{t-1}(1-\alpha)^{i-1}\cdot i\\
&\le Cd_x^{11/2}H\beta \tau^2.\\
\end{align*}
%\dhruv{(a minor error here, need to be a bit less lossy and not throw away the $\beta$ factor above)} 
By Lipschitz condition on $c_t$, we have
\begin{align*}
|c_t(y_t(M_t;K_0),u_t(M_t;K_0))-c_t(y_t,u_t)|&\le L(\|y_t-y_t(M_t;K_0)\|_1)\le C L d_x^{11/2} \beta\log^2(1/\beta)H\tau^2.
\end{align*}
\end{proof}

\subsection{Wrapping up the proof of Theorem~\ref{thm:po-main-ldc-simplex}}\label{sec:po-proof-nonmarkov}

Before proving Theorem~\ref{thm:po-main-ldc-simplex}, we establish that the loss functions $\ell_t$ used in $\NMGPCPOS$ are Lipschitz with respect to $\|\cdot\|_{\Sigma}$ in Lemma~\ref{lem:po-lt-lip-nonmarkov}, where $\|\cdot\|_{\Sigma}$ measures the $\ell_1$-norm of $M=M^{[0:H]}$ as a flattened vector in $\mathbb{R}^{d_x(d_y+d_x)(H+1)}$, formally given by
\begin{align*}
\|M_t\|_{\Sigma}:=\sum_{i=0}^{H}\sum_{j\in[d_x],k\in[d_y+d_x]}|(M_t^{[i]})_{jk}|.
\end{align*}

\begin{lemma} [Lipschitzness of $e_t$]
\label{lem:po-lt-lip-nonmarkov}
Suppose that a partially observable simplex LDS $\ML$ and $H\ge \tau>0$ are given, and $\Pi_{\tau}(\ML)$ is nonempty. Fix $K_0\in\mathbb{S}^{d_x,d_y}$. For each $t\in[T]$, the pseudo loss function $e_t$ (as defined on Line~\ref{line:nonmarkovet-loss} of \cref{alg:gpc-po-nonmarkov} is $O(Ld_x^{11/2}\tau)$-Lipschitz with respect to the norm $\|\cdot\|_{\Sigma}$ in $\MM_{d_x,d_y,H}^{+}$.  %\noah{did we say somewhere we're dropping the subscripts on $\mathcal{M}$?}
\end{lemma}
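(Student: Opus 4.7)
The plan is to track the proof of Lemma~\ref{lem:po-lt-lip} line by line, replacing each step with its non-Markov counterpart. There are two structural changes: (i) the learning matrices $M^{[i]}$ are now block-structured with $d_y+d_x$ columns, because signals are augmented with the uniform prior $\tfrac{1}{d_x}\mathbf{1}_{d_x}$; (ii) the Case~2 lower bound on $\alpha$ is only $\alpha\geq 1/(256 d_x\tau)$, as derived via the bridge-matrix argument inside the proof of Lemma~\ref{lem:nonmarkovpo-mem-mismatch}. Both effects contribute extra factors of $d_x$ to the final Lipschitz constant relative to the Markov case.

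Fix $M_1,M_2\in\MM_{d_x,d_y,H}^{+}$. First, I would bound the Lipschitz constant of the truncated but unprojected control $M\mapsto \tilde u_t(M;K_0)$ defined in \cref{eq:truncated-unprojected-control}. Since each augmented signal $\begin{bmatrix} o_{t-i} \\ \tfrac{1}{d_x}\mathbf{1}_{d_x}\end{bmatrix}$ has $\ell_1$-norm at most $2$, the same $\ell_1$–$\ell_2$–$\ell_1$ manipulation as in the Markov proof yields
\[
\|\tilde u_t(M_1;K_0)-\tilde u_t(M_2;K_0)\|_1 \leq O(\sqrt{d_x})\,\|M_1-M_2\|_\Sigma,
\]
with only a harmless factor of $2$ absorbed from the doubled signal norm. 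Combining this with Lemma~\ref{thm:proj-lip}, Lemma~\ref{cor:lip-control}, and the $(\|\cdot\|_1,\|\cdot\|_1)$-Lipschitz bounds on $\phi$ and $\phi^{-1}$ from Lemma~\ref{lem:transformation} gives $\|u_t(M_1;K_0)-u_t(M_2;K_0)\|_1 \leq O(d_x^{7/2})\|M_1-M_2\|_\Sigma$, identical to the Markov bound because the Minkowski projection operates in $\mathbb{R}^{d_x-1}$ regardless of signal augmentation.

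Next, I would unfold $y_t(M;K_0)$ via \cref{eq:yt-unfold-po} and split the resulting geometric sum on $\tau_A := \tmix(A)$. When $\tau_A \leq 4\tau$, inserting the stationary distribution of $A$ and invoking Lemma~18 of \citet{golowich2024online} controls the tail at $O(\tau)$, giving an $O(d_x^{7/2}\tau)$ contribution. When $\tau_A > 4\tau$, the main new step is lower-bounding $\alpha$: the mixing-time assumption of Definition~\ref{def:mixing-simplex-ldc} is imposed on the $2d_x$-dimensional matrix $A_{\pi,\mathrm{cl}}$ rather than on a single-block $\mathbb{A}_{K^{\st}}^{\alpha}$. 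I would follow the bridge-matrix argument from the proof of Lemma~\ref{lem:nonmarkovpo-mem-mismatch}: introduce $A_{\mathrm{compare}} = \begin{bmatrix} A & 0 \\ B_\pi C & A_\pi \end{bmatrix}$ and apply the identity $A_{\mathrm{compare}}^t - A_{\pi,\mathrm{cl}}^t = \sum_{i=1}^{t} A_{\pi,\mathrm{cl}}^{t-i}(A_{\mathrm{compare}} - A_{\pi,\mathrm{cl}}) A_{\mathrm{compare}}^{i-1}$ together with the facts that $A_{\mathrm{compare}}-A_{\pi,\mathrm{cl}}$ is $\alpha$-scaled, that $A_{\mathrm{compare}}^{i-1}$ preserves $\Delta^{d_x}\times\Delta^{d_x}$, and that $A_{\pi,\mathrm{cl}}^{t-i}$ is a contraction on $\Delta^{d_x}\times\Delta^{d_x} - \Delta^{d_x}\times\Delta^{d_x}$ in $\|\cdot\|_1$. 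This forces $\tau_A \leq 4\tau$ whenever $\alpha \leq 1/(256 d_x\tau)$, a contradiction, so $\alpha > 1/(256 d_x\tau)$ and $\sum_i (1-\alpha)^{i-1} \leq 256 d_x\tau$. Thus $y_t(\cdot;K_0)$ is $O(d_x^{9/2}\tau)$-Lipschitz, one factor of $d_x$ larger than in the Markov analog.

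Finally, I would assemble the Lipschitz constant of $e_t$. By Assumption~\ref{asm:po-convex-loss}, $M\mapsto c_t^y(y_t(M;K_0))$ inherits an $O(Ld_x^{9/2}\tau)$ bound. For the Minkowski-extension summand, $c_t^u\circ\phi^{-1}$ is $O(L)$-Lipschitz on $\mathcal{K}$ by Lemma~\ref{lem:transformation}; Lemma~\ref{cor:lip-control}(2) then yields that its extension is $O(DLd_x^4)$-Lipschitz in $\|\cdot\|_2$ over $D\mathbb{B}^{d_x-1}$, where $D=O(\sqrt{d_x})$ bounds $\|\phi(\tilde u_t(M;K_0))\|_2$ uniformly on $\MM_{d_x,d_y,H}^{+}$. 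Composing with the $O(\sqrt{d_x})$-Lipschitz map $M\mapsto \phi(\tilde u_t(M;K_0))$ produces an $O(Ld_x^{11/2})$-Lipschitz dependence on $M$, and summing with the $y$-term yields the advertised $O(Ld_x^{11/2}\tau)$ bound. The main obstacle in the argument is the Case~2 lower bound on $\alpha$, since the mixing assumption is now on the $2d_x$-dimensional closed-loop matrix and the direct perturbation argument of the Markov proof no longer applies; the bridge-matrix identity together with the simplex-preservation properties of $A_{\mathrm{compare}}$ is essential to reducing back to the one-block mixing time.
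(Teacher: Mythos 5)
Your proposal is correct and follows essentially the same route as the paper: the same $\ell_1$--$\ell_2$ bound giving $O(\sqrt{d_x})$-Lipschitzness of $M\mapsto\tilde u_t(M;K_0)$, the same two-case split on $\tmix(A)$ with the bridge-matrix argument (imported from the memory-mismatch lemma) supplying $\alpha>1/(256 d_x\tau)$ in Case 2, and the same assembly via Lemma~\ref{cor:lip-control} for the Minkowski-extension summand. Your intermediate exponents differ slightly from the paper's (e.g.\ you track $O(d_x^{9/2}\tau)$ for the $y$-term where the paper writes $O(d_x^{11/2}\tau)$), but both are valid upper bounds that land on the stated $O(Ld_x^{11/2}\tau)$.
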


\begin{proof}
Since $e_t$ (Line~\ref{line:nonmarkovet-loss} of \cref{alg:gpc-po-nonmarkov}) is a composite of many functions, we analyze the Lipschitz condition of each of its components. First, we show that the $M\mapsto y_t(M;K_0)$ and $M\mapsto \tilde{u}_t(M;K_0)$ are Lipschitz in $M$ on $\MM_{d_x,d_y,H}^{+}$.
In particular, fix $M_1,M_2\in\MM_{d_x,d_y,H}^{+}$, we will show that 
\begin{align*}
\|y_t(M_1;K_0)-y_t(M_2;K_0)\|_1&\le O(d_x^{7/2}\tau) \|M_1-M_2\|_{\Sigma}, \\ 
\|\tilde{u}_t(M_1;K_0)-\tilde{u}_t(M_2;K_0)\|_1&\le 2\sqrt{d_x} \|M_1-M_2\|_{\Sigma}.
\end{align*}
Expanding $\tilde{u_t}$, we have
\begin{align*}
\|\tilde{u}_t(M_1;K_0)-\tilde{u}_t(M_2;K_0)\|_1&\le \sqrt{d_x}\|\tilde{u}_t(M_1;K_0)-\tilde{u}_t(M_2;K_0)\|_2\\
&\le \sqrt{d_x}\left\|(M_1^{[0]}-M_2^{[0]})y_t(K_0)+\sum_{i=1}^H\bar{\lambda}_{t,i}(M_1^{[i]}-M_2^{[i]})y_{t-i}(K_0)\right\|_1\\
&\le 2\sqrt{d_x} \max_{0\le i\le H} \|M_1^{[i]}-M_2^{[i]}\|_{1\rightarrow 1} \\
&\le 2\sqrt{d_x}\|M_1-M_2\|_{\Sigma},
\end{align*}
where the second inequality follows from the fact that $\ell_2$ norm is bounded by $\ell_1$ norm. By Lemma~\ref{cor:lip-control}, we have
\begin{align*}
\|u_t(M_1;K_0)-u_t(M_2;K_0)\|_1\le O(d_x^{7/2})\|M_1-M_2\|_{\Sigma}.
\end{align*}
Expanding $y_t$, we have that
\begin{align*}
\|y_t(M_1;K_0)-y_t(M_2;K_0)\|_1&=\left\|\sum_{i=1}^{t-1}\bar{\lambda}_{t,i}C[(1-\alpha)A]^{i-1}\alpha B(u_t(M_1)-u_t(M_2))\right\|_1.
\end{align*}
We consider two cases, depending on the mixing time $\tau_A:=\tmix(A)$ of $A$. 

\paragraph{Case 1:} $\tau_A\le 4\tau$. Let the stationary distribution of $A$ be denoted $p^{\star}\in\Delta^{d_x}$. Then by Lemma 22 in \citep{golowich2024online}, $\forall i\in\mathbb{N}$ and $v\in\mathbb{R}^d$ with $\langle \mathbbm{1}, v\rangle=0$, $\|A^{i}v\|_1\le 2^{-\lfloor i/\tau_A\rfloor}\|v\|_1\le 2^{-\lfloor i/4\tau\rfloor}\|v\|_1$. Then,
\begin{align*}
\|y_t(M_1)-y_t(M_2)\|_1&\le \sum_{i=1}^{t-1} \|CA^{i-1}B(u_t(M_1)-u_t(M_2))\|_1\\
&\le \sum_{i=1}^{t-1} 2^{-\lfloor (i-1)/4\tau\rfloor}\|u_t(M_1)-u_t(M_2)\|_1\\
&\le Cd_x^{7/2}\|M_1-M_2\|_{\Sigma}\sum_{i=1}^{t-1} 2^{1-\lfloor (i-1)/4\tau\rfloor}\\
&\le Cd_x^{7/2}\tau \|M_1-M_2\|_{\Sigma}. 
\end{align*}

\paragraph{Case 2:} $\tau_A>4\tau$. 
We already know from the previous discussion that $\alpha\ge 1/(256d_x \tau)$

We have
\begin{align*}
\|y_t(M_1)-y_t(M_2)\|_1&\le\sum_{i=1}^{t-1}(1-\alpha)^{i-1}\|(u_t(M_1)-u_t(M_2))\|_1\\
&\le Cd_x^{7/2}\|M_1-M_2\|_{\Sigma}\sum_{i=1}^{t-1}(1-1/(256d_x\tau))^{i-1}\\
&\le C d_x^{11/2}\tau\|M_1-M_2\|_{\Sigma}.
\end{align*}
Here we proved that $M\mapsto y_t(M;K_0)$ is $O(d_x^{11/2}\tau)$-Lipschitz, and $M\mapsto \tilde{u}_t(M;K_0)$ is $2\sqrt{d_x}$-Lipschitz. Immediately, $c_t^y(y_t(M;K_0))$ is $O(Ld_x^{11/2}\tau)$-Lipschitz on $\MM_{d_x,d_y,H}^{+}$. It suffices to show that
\begin{align*}
\pi_{\mathcal{K}}(\phi(\tilde{u}_t(M;K_0))) \cdot c_t^u(\phi^{-1}(\frac{\phi(\tilde{u}_t(M;K_0))}{\pi_{\mathcal{K}}(\phi(\tilde{u}_t(M;K_0))}))
\end{align*}
is Lipschitz on $\MM_{d_x,d_y,H}^{+}$. Since $\phi$ is $1$-Lipschitz over $\mathbb{R}^{d_y+d_x}$, we have that $M\mapsto \phi(\tilde{u}_t(M;K_0))$ is $2\sqrt{d_x}$-Lipschitz. By assumption $c_t^{u}$ is $L$-Lipschitz on $\Delta^{d_x}$, and $\phi^{-1}$ is $2$-Lipschitz over $\mathcal{K}$, we have $c_t^{u}\circ\phi^{-1}$ is $O(L)$-Lipschitz on $\mathcal{K}$. 

By Lemma~\ref{cor:lip-control}, 
\begin{align*}
u\mapsto \pi_{\mathcal{K}}(u)\cdot \left(c_t^u\circ\phi^{-1} \left(\frac{u}{\pi_{\mathcal{K}}(u)}\right)\right)
\end{align*}
is $O(Ld_x^{4})$-Lipschitz over $\mathcal{K}$, making $c_t^u(u_t(M;K_0))$ $O(Ld_x^{9/2})$-Lipschitz on $\MM_{d_x,d_y,H}^{+}$.  . Combining, we have that $e_t$ is $O(Ld_x^{11/2}\tau)$-Lipschitz on $\MM_{d_x,d_y,H}^{+}$.  
\end{proof}

\begin{proof} [Proof of \cref{thm:po-main-ldc-simplex}]
We are ready to prove \cref{thm:po-main-ldc-simplex}. It is crucial to check the condition in Lemma~\ref{lem:nonmarkovpo-mem-mismatch}. Define
\begin{align*}
\|M_t\|_{\star}:=\max_{0\le i\le H}\|M_{t}^{[i]}\|_{1\rightarrow 1}.
\end{align*}
It is easy to check that $\|M\|_{\Sigma}\ge \|M\|_{\star}$. With slight abuse of notation, let $\|M\|_2$ denote the $\ell_2$-norm of the flattened vector in $\mathbb{R}^{d_x(d_y+d_x)(H+1)}$ for $M\in\MM_{d_x,d_y,H}^{+}$. Let $\|\cdot\|_{\Sigma}^*$ denote the dual norm of $\|\cdot\|_{\Sigma}$. Lemma~\ref{lem:po-lt-lip-nonmarkov} implies that
\cref{alg:gpc-po-nonmarkov} guarantees
\begin{align*}
& \quad \|M_{t}-M_{t+1}\|_{\star}\le\|M_{t}-M_{t+1}\|_{\Sigma}\le_{(1)} \sqrt{d_x(d_y+d_x)H}\|M_{t}-M_{t+1}\|_2\\
&\le_{(2)} \eta \sqrt{d_x(d_y+d_x)H} \|\partial e_t(M_t)\|_{2}\le_{(3)}\eta \sqrt{d_x(d_y+d_x)H}\|\partial e_t(M_t)\|_{\Sigma}\\
&\le_{(4)} \eta (d_x(d_y+d_x)H)^{3/2} \|\partial e_t(M_t)\|_{\Sigma}^{*}\le_{(5)} \eta(d_x(d_y+d_x)H)^{3/2}Ld_x^{11/2}\tau\\
&\le 3\eta H^{5/2}Ld_x^{17/2}, 
\end{align*}
for some constant $C$, where $(1)$ and $(3)$ follow from $\ell_1$-norm-$\ell_2$-norm inequality that $\forall v\in\mathbb{R}^n$ $\|v\|_2\le \|v\|_1\le\sqrt{n}\|v\|_2$; $(2)$ follows from the update in Line~\ref{line:nonmarkovpo-update} of \cref{alg:gpc-po-nonmarkov};
$(4)$ follows from that $\|M\|_{\Sigma}\le d_xd_yH\|M\|_{\Sigma}^*$, and $(5)$ follows from Lemma~\ref{lem:po-lt-lip}, the assumption that $d_x\ge d_y$, and that $H\ge \tau$. 

Moreover, by the standard regret bound of OGD (Theorem 3.1 in \cite{hazan2016introduction}), we have
\begin{align*}
\sum_{t=1}^T e_t(M_t)-\min_{M\in\mathcal{M}}\sum_{t=1}^T e_t(M)\le O\left(\frac{D^2}{\eta}+(Ld_x^{11/2}\tau)^2\eta T\right), 
\end{align*}
where $D$ denotes the diameter of $\MM_{d_x,d_y,H}$ with respect to $\|\cdot\|_{\Sigma}$, which is bounded in this case by 
\begin{align*}
D=\sup_{M\in\mathcal{M}}\left\{ \sum_{i=0}^H\sum_{j\in[d_x],k\in[d_y+d_x]}|M^{[i]}_{jk}|\right\}\le 3(d_y+d_x)H.
\end{align*}
Take $\eta=1/(LH^{2}d_x^{6}\sqrt{T})$ and recall that $d_y\le d_x$, then we have
\begin{align*}
& \quad \sum_{t=1}^T c_t(y_t,u_t)-\min_{K^{\star}\in \Ksim_\tau(\ML)}\sum_{t=1}^T c_t(y_t(K^{\star}),u_t(K^{\star}))\\
&\le \sum_{t=1}^T c_t(y_t,u_t)-\sum_{t=1}^T c_t(y_t(M_t;K_0),u_t(M_t;K_0))+\sum_{t=1}^T c_t(y_t(M_t;K_0),u_t(M_t;K_0))- \sum_{t=1}^Te_t(M_t)\\
& \quad + \sum_{t=1}^Te_t(M_t)-\min_{M\in\mathcal{M}}\sum_{t=1}^T e_t(M)+1\\
&\le \sum_{t=1}^T e_t(M_t)-\min_{M\in\mathcal{M}}\sum_{t=1}^T e_t(M)+\tilde{O}\left(\eta H^{11/2}L^2d_x^{14} T\right)\\
&\le \tilde{O}\left(\frac{d_x^2H^2}{\eta}+(Ld_x^{11/2}\tau)^2\eta T+\eta H^{11/2}L^2d_x^{14} T\right)\\
&\le \tilde{O}\left(L\tau^{4}d_x^{8}\sqrt{T}\right),
\end{align*}
where the first inequality follows from Lemma~\ref{lem:po-approx-ldc} by taking $\eps=1$; the second inequality follows from Lemma~\ref{lem:nonmarkovpo-mem-mismatch} by taking $\beta=3\eta H^{5/2}Ld_x^{17/2}$ and since $c_t(y_t(M_t;K_0),u_t(M_t;K_0))\le e_t(M_t)$ from Observation~\ref{obs:ext}, and the third inequality follows from OGD regret guarantee. 
\end{proof}

\end{document}